\newcommand{\R}{{\mathbb{R}}}
\newcommand{\E}{\mathrm{E}}
\newcommand{\HH}{\mathcal{H}}
\renewcommand{\P}{\mathrm{P}}
\renewcommand{\d}{\mathrm{d}}
\newcommand{\<}{\langle}
\renewcommand{\>}{\rangle}
\newcommand{\e}{\mathrm{e}}
\newcommand{\Var}{\text{\rm Var}}
\newcommand{\Z}{\mathrm{Z}}
\DeclareMathOperator{\Cov}{\text{\rm Cov}}
\title{Spatial stationarity, ergodicity and CLT for parabolic Anderson model with delta initial condition in dimension $d\geq 1$\thanks{%
	Research supported in part by  NSF grants DMS-1811181 (D.N.) and DMS-1855439 (D.K.), and FNR grant APOGee (R-AGR-3585-10) at University of Luxembourg (F.P.).}}
\author{
		Davar Khoshnevisan\\University of Utah\\\texttt{davar@math.utah.edu}\\
	\and\and
		David Nualart\\University of Kansas\\\texttt{nualart@ku.edu}\\
	\and
		Fei Pu\\University of Luxembourg\\\texttt{fei.pu@uni.lu}
	}
\date{\today}
\begin{document}
\newtheorem{stat}{Statement}[section]
\newtheorem{proposition}[stat]{Proposition}
\newtheorem*{prop}{Proposition}
\newtheorem{corollary}[stat]{Corollary}
\newtheorem{theorem}[stat]{Theorem}
\newtheorem{lemma}[stat]{Lemma}
\theoremstyle{definition}
\newtheorem{definition}[stat]{Definition}
\newtheorem*{cremark}{Remark}
\newtheorem{remark}[stat]{Remark}
\newtheorem*{OP}{Open Problem}
\newtheorem{example}[stat]{Example}
\newtheorem{nota}[stat]{Notation}
\numberwithin{equation}{section}
\maketitle

\begin{abstract}
 Suppose that  $\{u(t\,, x)\}_{t >0, x \in\R^d}$  is the 
solution to a $d$-dimensional parabolic Anderson model with delta initial condition and driven 
by a Gaussian noise that is white in time and has a spatially homogeneous 
covariance given by a nonnegative-definite measure $f$ which satisfies Dalang's condition.
Let $\bm{p}_t(x):=
	(2\pi t)^{-d/2}\exp\{-\|x\|^2/(2t)\}$ denote the standard Gaussian
	heat kernel on $\R^d$. 
We prove that for all $t>0$, the process $U(t):=\{u(t\,, x)/\bm{p}_t(x): x\in \R^d\}$ is stationary  using 
Feynman-Kac's formula,  and is ergodic under the additional condition $\hat{f}\{0\}=0$, where $\hat{f}$ is the Fourier transform of $f$. Moreover, using Malliavin-Stein method, we investigate various central limit theorems for $U(t)$ based on the quantitative analysis of $f$. In particular, when $f$ is given by Riesz kernel, i.e., $f(\d x) = \|x\|^{-\beta}\d x$, we obtain a multiple phase transition for the CLT for $U(t)$ from $\beta\in(0\,,1)$
to $\beta=1$ to $\beta\in(1\,,d\wedge 2)$.

\end{abstract}

\bigskip

\noindent{\it \noindent MSC 2010 subject classification}: 60H15, 60H07, 60F05.
 \smallskip

\noindent{\it Keywords}: parabolic Anderson model, stationarity, ergodicity, central limit theorem, Malliavin calculus, Stein method. 
\smallskip
	
\noindent{\it Running head:} Stationarity, ergodicity and CLT for PAM.

{
  \hypersetup{linkcolor=black}
  \tableofcontents
}

\section{Introduction}

Consider the following {\em parabolic Anderson model}:
\begin{equation}\label{PAM}\left[\begin{split}
	&\partial_t u(t\,, x) = \tfrac12\Delta u(t\,, x) + u(t\,, x)\eta(t \,, x)
		&\text{for $(t\,,x)\in(0\,,\infty)\times\R^d$},\\
	&\text{subject to}\qquad u(0)=\delta_0,
\end{split}\right.\end{equation}
where $\eta$  denotes a centered, generalized Gaussian random field with 
\[
	\E[\eta(t\,,x)\eta(s\,,y)] = \delta_0(t-s)f(x-y)
	\qquad[s,t\ge0,\, x,y\in\R^d],
\]
for a non-zero, nonnegative-definite, tempered Borel measure $f$ on $\R^d$. 
As in Walsh\cite{Walsh}, by a ``solution'' to \eqref{PAM}
we mean a solution to the integral equation,
\begin{align}\label{mild}
	u(t\, , x) = \bm{p}_{t}(x) + \int_{(0,t)\times\R^d}
	\bm{p}_{t - s}(x - y)u(s\,, y)\,\eta(\d s\,\d y)
	\qquad\text{a.s.\ for all $t>0$ and $x\in\R^d$,}
\end{align}
where  $\bm{p}_t(x)$ denotes the heat kernel; that is,
\[
	\bm{p}_{t}(x) = (2\pi t)^{-d/2}\e^{-\|x\|^2/(2t)}
	\qquad\text{for $t>0$ and $x\in\R^d$}.
\]
The existence and uniqueness problem for \eqref{PAM} and  of its
variations have been studied extensively by many authors \cite{Dalang1999, CD15, CH19Comparison}. In the present
particular setting, it is easy to see that \eqref{mild} has a 
[unique] predictable solution $u$ if and only if there exists a [unique] predictable
solution $U$ to the following:
\begin{equation}\label{mild:U:pre}
	U(t\,,x) = 1 + \int_{(0,t)\times\R^d}\frac{\bm{p}_{t - s}(x - y) \bm{p}_s(y)}{\bm{p}_t(x)}\,
	U(s\,,y)\,\eta(\d s\,\d y),
\end{equation}
where the pairing $(u\,,U)$ is given by
\begin{equation} \label{U}
	U(t\,,x):= \frac{u(t\,,x)}{\bm{p}_t(x)}
	\qquad\text{for $t>0$ and $x\in\R^d$}.
\end{equation}
It is possible to check directly that
\begin{equation}\label{PPPP}
	\frac{\bm{p}_{t-s}(a)\bm{p}_s(b)}{\bm{p}_t(a+b)} = 
	\bm{p}_{s(t-s)/t}\left( b - \frac st (a+b)\right)
	\quad\text{for all $0<s<t$ and $a,b\in\R^d$}.
\end{equation}
In fact, both sides represent the probability density of $(X_{t-s}\,,X_s)$ where $X$
denotes a Brownian bridge that emenates from zero and is conditioned to reach  $a+b$ at time $t$.

With the preceding in mind, \eqref{mild:U:pre} can be recast as the following
linear integral equation:
\begin{equation}\label{mild:U}
	U(t\,,x) = 1 + \int_{(0,t)\times\R^d}\bm{p}_{s(t-s)/t}\left(
	y - \frac st x \right) U(s\,,y)\,\eta(\d s\,\d y).
\end{equation}
In order to present the basic existence, uniqueness result for \eqref{mild:U},
hence also \eqref{PAM}, let us introduce the following function 
$\Upsilon:(0\,,\infty)\to(0\,,\infty]$:
\begin{equation}\label{Upsilon}
	\Upsilon(\beta) := \frac{1}{(2\pi)^d}\int_{\R^d}\frac{\hat{f}(\d y)}{\beta+\|y\|^2}
	\qquad\text{for all $\beta>0$},
\end{equation}
where $\hat{f}$ denotes the Fourier transform of $f$.

Then we have the following result, which is a variation on a celebrated
theorem of Dalang \cite{Dalang1999} to the linear setting of \eqref{PAM}, started
at initial measure $\delta_0$.

\begin{theorem}\label{th:U}
	Suppose $\Upsilon(\beta)<\infty$ for one,
	hence all, $\beta>0$. Then, the 
	integral equation   \eqref{mild:U}  has a solution $U=\{U(t\,,x)\}_{t>0,x\in\R^d}$
	that is a predictable random field. Moreover, $U$ is the only predictable solution to
	\eqref{mild:U} that satisfies the following for all $\varepsilon\in(0\,,1)$, $t>0$, and $k\ge2$:
	\begin{equation}\label{moment:U}
		\sup_{x\in\R^d}\E\left(|U(t\,,x)|^k\right) \le
		\left(\frac2\varepsilon\right)^k
		\exp\left\{ \frac{tk}{4}\Upsilon^{-1}\left(\frac{1-\varepsilon}{4z_k^2}\right)\right\}:=c_{t,k},
	\end{equation}
	where $z_k$ denotes the optimal constant in the Burkholder--Davis--Gundy inequality
	for continuous $L^k(\Omega)$-martingales.
	Finally, $U(t):=\{U(t\,,x)\}_{x\in\R^d}$ is a stationary random field for every $t>0$,
	and $\lim_{t\to 0}U(t\,,x)=1$ in $L^k(\Omega)$ for every $x\in\R^d$ and for all $k\ge 2$.
\end{theorem}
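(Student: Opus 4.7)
The plan is to build $U$ by Picard iteration while deriving \eqref{moment:U} in the same pass, obtain uniqueness from the same estimate applied to the difference of two solutions, obtain stationarity from a time-dependent spatial translation of the driving noise, and read off the small-time limit from the moment estimate.

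\textbf{Existence, moment bound, and uniqueness.} Set $U_0 \equiv 1$ and iterate
\[
U_{n+1}(t,x) = 1 + \int_{(0,t)\times\R^d} \bm{p}_{s(t-s)/t}(y - sx/t)\, U_n(s,y)\, \eta(\d s, \d y).
\]
BDG with optimal constant $z_k$ followed by Minkowski in $L^{k/2}(\Omega)$ gives, after translating $y$ to eliminate the $x$-dependence and applying Plancherel,
\[
\|U_{n+1}(t,x)\|_k^2 \le 2 + 2 z_k^2 \int_0^t N_n(s)^2\, \Phi_t(s)\, \d s,
\]
with $N_n(s) := \sup_y \|U_n(s,y)\|_k$ and $\Phi_t(s) := (2\pi)^{-d} \int_{\R^d} e^{-s(t-s)\|\xi\|^2/t}\, \hat f(\d\xi)$. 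Using $(\beta + \|\xi\|^2)^{-1} = \int_0^\infty e^{-r(\beta + \|\xi\|^2)}\,\d r$ together with the elementary bound $s(t-s)/t \ge \tfrac12 \min(s,t-s)$, I would check that the exponential ansatz $N(t)^2 = (2/\varepsilon)^2 e^{\gamma t}$ with $\gamma := \tfrac{k}{2}\, \Upsilon^{-1}((1-\varepsilon)/(4z_k^2))$ is a supersolution of the recursion, giving \eqref{moment:U} uniformly in $n$. The same BDG estimate applied to $U_{n+1} - U_n$ produces a geometric series, so $\{U_n\}$ is Cauchy in $L^k(\Omega)$; the limit $U$ is predictable and satisfies \eqref{mild:U}, and applying the inequality once more to $W := U - V$ for any two predictable solutions with the moment bound forces $W \equiv 0$.

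\textbf{Stationarity.} Fix $t > 0$ and $z \in \R^d$, and introduce the time-dependent spatial translate of the noise,
\[
\eta^{(z,t)}(\d s, \d y) := \eta(\d s, \d(y + sz/t)).
\]
Because $\eta$ is $\delta$-correlated in time, the translations $sz/t$ and $s'z/t$ coincide on $\{s = s'\}$, so $\eta^{(z,t)}$ has the same covariance, hence the same law, as $\eta$. In the Wiener chaos expansion $U(t,x) = 1 + \sum_{n \ge 1} I_n(h_{t,x,n})$ with
\[
h_{t,x,n}(s_1, y_1, \ldots, s_n, y_n) = \frac{\bm{p}_{s_1}(y_1)\, \bm{p}_{s_2-s_1}(y_2 - y_1) \cdots \bm{p}_{t-s_n}(x - y_n)}{\bm{p}_t(x)}
\]
(the joint density, at times $s_1 < \cdots < s_n$, of a Brownian bridge from $0$ to $x$), one sees directly that $h_{t, x+z, n}(\vec s, \vec y) = h_{t, x, n}(\vec s,\, y_1 - s_1 z/t,\, \ldots,\, y_n - s_n z/t)$. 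Substituting $y_i \mapsto y_i + s_i z/t$ inside each multiple stochastic integral rewrites $I_n(h_{t,x+z,n})$ as the integral of $h_{t,x,n}$ against $\eta^{(z,t)}$; carrying this out simultaneously at finitely many base points $x_1, \ldots, x_m$ yields joint equality in law. This is precisely the Feynman--Kac identity mentioned in the abstract, transported to the chaos level, and it also admits a direct probabilistic reading via the affine shift of the Brownian bridge.

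\textbf{Limit at $t \to 0$ and main obstacle.} From \eqref{mild:U} together with the moment bound, $\|U(t,x) - 1\|_k^2 \le z_k^2\, \sup_{s \le t} c_{s,k}^2 \int_0^t \Phi_t(s)\,\d s$; substituting $s = tr$ gives $\int_0^t \Phi_t(s)\,\d s = (2\pi)^{-d} \int_{\R^d} t \int_0^1 e^{-tr(1-r)\|\xi\|^2}\,\d r\,\hat f(\d\xi)$, which is dominated by $(2\pi)^{-d}\int_{\R^d} \min(t, C/\|\xi\|^2)\,\hat f(\d\xi)$ and tends to $0$ by dominated convergence under Dalang's condition. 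I expect the most delicate step to be pinning down the exact exponential rate in \eqref{moment:U}: because $\bm{p}_{s(t-s)/t}$ depends on $t$ through both its variance and its argument, the usual Laplace-transform Gr\"onwall argument for Dalang-type equations does not apply verbatim and must be replaced by a direct exponential-ansatz argument that carefully tracks the constants to recover $\Upsilon^{-1}((1-\varepsilon)/(4z_k^2))$ in the exponent.
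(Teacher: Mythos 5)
Your existence and moment-bound argument contains a genuine constant-tracking bug. From \eqref{mild:U}, $U_{n+1}-1$ is a stochastic integral, so BDG gives $\|U_{n+1}(t,x)-1\|_k^2 \le z_k^2\int_0^t N_n(s)^2\Phi_t(s)\,\d s$; squaring the triangle inequality $\|U_{n+1}\|_k\le 1+\|U_{n+1}-1\|_k$ is what produces your recursion $N_{n+1}^2\le 2 + 2z_k^2\int_0^t N_n^2\,\Phi_t\,\d s$, and that factor of $2$ is fatal. Taking $\gamma=\beta_{\varepsilon,k}$ so that Lemma~\ref{lem:p*f} yields $4z_k^2\Upsilon(2\gamma)=1-\varepsilon$, the supersolution test $2+2(1-\varepsilon)Ce^{\gamma t}\le Ce^{\gamma t}$ forces $(2\varepsilon-1)Ce^{\gamma t}\ge 2$, which fails for every $\varepsilon\le 1/2$, whereas \eqref{moment:U} is asserted for all $\varepsilon\in(0,1)$. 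The repair is not to square: either run the ansatz at the level of $\|\cdot\|_k$, using $\sqrt{a+b}\le\sqrt a+\sqrt b$ together with $1-\sqrt{1-\varepsilon}\ge\varepsilon/2$, or (as the paper does) work with the Picard increments $\mathcal D_n=U_n-U_{n-1}$, whose weighted-sup recursion $\mathcal F_{n+1}(t,\beta)\le 4z_k^2\Upsilon(2\beta)\mathcal F_n(t,\beta)$ carries no additive constant, then sum the geometric series. Separately, your exponent $\gamma=\tfrac k2\Upsilon^{-1}\bigl((1-\varepsilon)/(4z_k^2)\bigr)$ has a spurious factor of $k$: since $N_n$ is an $L^k(\Omega)$ norm and \eqref{moment:U} is equivalent to $\|U(t,x)\|_k\le(2/\varepsilon)\exp\{\tfrac t4\Upsilon^{-1}((1-\varepsilon)/(4z_k^2))\}$, the ansatz $N^2=(2/\varepsilon)^2e^{\gamma t}$ must use $\gamma=\tfrac12\Upsilon^{-1}\bigl((1-\varepsilon)/(4z_k^2)\bigr)=\beta_{\varepsilon,k}$.

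Your stationarity argument, by contrast, is correct and genuinely different from the paper's. You exploit the fact that the time-dependent shear $(s,y)\mapsto(s,y+sz/t)$ preserves the law of $\eta$ because its covariance is supported on $\{s=s'\}$, together with the Brownian-bridge affine identity $h_{t,x+z,n}(\vec s,\vec y)=h_{t,x,n}(\vec s,\vec y-\vec s z/t)$ at the level of chaos kernels; this is precisely the shear-invariance device that the paper's first Remark credits to Amir, Corwin and Quastel and deliberately sets aside. The paper instead mollifies $f$ to $f_\varepsilon=\bm p_{2\varepsilon}*f$, invokes the Feynman--Kac representation of Hu and Nualart for the mollified equation, disintegrates through the Brownian bridge to obtain an explicitly stationary expression for $U^\varepsilon(t,\cdot)$, and passes to the limit $\varepsilon\to0$. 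Your route is more elementary in that it avoids mollification and Feynman--Kac altogether; the paper's is more portable in that it never needs an explicit chaos expansion. Your small-time limit is essentially the paper's (dominated convergence after a variant of Lemma~\ref{lem:p*f}), modulo the same norm-versus-moment slip noted above.
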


From now on, we always assume the following. 
\begin{equation}\label{f:finite}
	\Upsilon(1)<\infty
	\quad\text{and}\quad f(\R^d)>0.
\end{equation}
Thanks to \eqref{U} and Theorem \ref{th:U}, 
the finiteness  of $\Upsilon(1)$ implies that \eqref{PAM} has a predictable solution $u$ that uniquely satisfies
that $u(t\,,x) = (1+o(1))\bm{p}_t(x)$ in  { $L^k(\Omega)$ as $t\to0$ for every $x\in\R^d$ and for all $k\ge 2$.}
Furthermore, the strict positivity of the total mass of $f$ is assumed merely to avoid degeneracies in 
\eqref{PAM}.  Before we delve deeper into that topic, however,
let us pause and make a few remarks.


\begin{cremark}
	When $d=1$ and $\eta$ denotes space-time white noise [$f=\delta_0$], the existence 
	and uniqueness of $u$, hence also $U$, are especially
	well known; see for example \cite{CD15}. In that case,
	the stationarity of $U(t)$ was proved first by Amir et al \cite{ACQ11} who used
	the fact that $\{\eta(t\,,x)\}_{t, x}$ has the same
	law as $\{\eta(t\,, x +at)\}_{t,x}$ for all $a\in\R$.
	Our proof of stationarity relies on the Feynman-Kac's formula and 
	works in the present much more general setting. 
\end{cremark}


\begin{cremark}
	It is possible to prove, using ideas from Dalang \cite{Dalang1999}, that the $(d+1)$-parameter random field
	$U$ has a version that is continuous
	in $L^k(\Omega)$ for every $k\ge2$. In turn, this fact and a suitable
	extension of Doob's separability theory (see Doob \cite{Doob}) together show
	that $U$ has a measurable version that solves \eqref{mild:U}. 
	From now on, we always choose this version of $U$ (and denote it also
	by $U$).
\end{cremark}

With \eqref{f:finite} in place and  the above remarks under way, we return to the topic at hand and present the first 
novel contribution of this paper.

\begin{theorem}\label{th:ergodic}
If $\hat{f}\{0\}=0$,  then	$U(t)$ is ergodic for all $t>0$.
\end{theorem}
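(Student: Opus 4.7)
The plan is to expand $U(t,x)$ in the Wiener chaos of $\eta$ and reduce ergodicity to a Fourier-analytic computation that is precisely driven by the hypothesis $\hat f\{0\}=0$. By iterating \eqref{mild:U} (or equivalently reading off the Feynman--Kac representation based on a Brownian bridge from $0$ to $x$), one obtains $U(t,x)=\sum_{n\ge0}I_n(g_{n,t,x})$, where the symmetric kernels $g_{n,t,x}$ satisfy the ``skew shift'' identity
\[
	g_{n,t,x}(s_1,z_1,\dots,s_n,z_n)
	=g_{n,t,0}\!\left(s_1,z_1-\tfrac{s_1}{t}x,\dots,s_n,z_n-\tfrac{s_n}{t}x\right).
\]
Because $\eta$ is white in time and spatially stationary, for every $y\in\R^d$ the field $\eta^{(y)}(s,z):=\eta(s,z+(s/t)y)$ has the same law as $\eta$, so $\tau_y:\eta\mapsto\eta^{(y)}$ is a measure-preserving transformation of the Gaussian probability space. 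The kernel identity then yields $U(t,x)\circ\tau_y=U(t,x+y)$, so the spatial shift on $U(t,\cdot)$ is implemented by $\tau_y$, and ergodicity of $U(t)$ will follow once we show that $\{\tau_y\}_{y\in\R^d}$ acts ergodically on $L^2(\sigma(\eta))$.

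Under the Wiener chaos decomposition $L^2(\sigma(\eta))=\bigoplus_{n\ge0}\mathcal{H}_n$, the transformation $\tau_y$ preserves each $\mathcal{H}_n$ and acts on $\HH^{\otimes n}$ as the tensor power of the unitary $(S_yh)(s,z)=h(s,z-(s/t)y)$ on $\HH$. Ergodicity therefore reduces to showing that, for every $n\ge1$, no nonzero symmetric $h\in\HH^{\otimes n}$ satisfies $S_y^{\otimes n}h=h$ for all $y\in\R^d$. Passing to the spatial Fourier transform, invariance forces $\hat h(s_1,\xi_1,\dots,s_n,\xi_n)$ to be supported in the set
\[
	H=\left\{(s,\xi)\in[0,t]^n\times(\R^d)^n:\sum_{i=1}^n s_i\xi_i=0\right\}.
\]

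The hypothesis $\hat f\{0\}=0$ now enters decisively. It implies that $\hat f^{\otimes n}$ is concentrated on $\{\xi:\xi_i\ne0\text{ for all }i\}$; for any such $\xi$ the slice $\{s\in[0,t]^n:\sum_i s_i\xi_i=0\}$ lies in a proper linear subspace of $\R^n$ and hence has $n$-dimensional Lebesgue measure zero. By Fubini, the $\HH^{\otimes n}$-norm of $h$, which can be written as an integral of $|\hat h|^2$ over $H$ with respect to $\hat f^{\otimes n}\otimes\d s$, is therefore $0$; hence $h=0$. Consequently the only $\tau_y$-invariant functions in $L^2(\sigma(\eta))$ are constants, and a fortiori the spatial shift is ergodic on the sub-$\sigma$-algebra generated by $U(t,\cdot)$. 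The main obstacle in this scheme is the careful setup of $\tau_y$ as a measure-preserving transformation that implements the spatial shift on $U(t,\cdot)$, together with the derivation of the kernel identity from \eqref{mild:U}; once those structural pieces are in hand, the Fourier vanishing is immediate.
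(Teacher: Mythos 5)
Your proof is correct, but it is a genuinely different argument from the one in the paper. The paper proves ergodicity through the Malliavin--Poincar\'e machinery that it also needs for the quantitative CLTs: it bounds $\|D_{s,y}U(t,x)\|_k$ by the Brownian-bridge density $\bm{p}_{s(t-s)/t}(y-\tfrac{s}{t}x)$ (Proposition~\ref{pr:Du}), feeds this into the covariance inequality \eqref{Poincare:Cov} to bound $\Cov(\mathcal G(t,0),\mathcal G(t,x))$ for products $\mathcal G$ of Lipschitz functionals of $U(t,\cdot)$ (Lemma~\ref{lem:V_N(t)}), and then uses the Fourier identity \eqref{inequality} together with $\hat f\{0\}=0$ and dominated convergence to show that spatial averages of such $\mathcal G$ have vanishing variance, invoking an abstract ergodicity criterion from \cite{CKNP}. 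Your route is spectral: you observe that the chaos kernels of $U(t,x)$ are obtained from those of $U(t,0)$ by the skew shift $z_i\mapsto z_i-\tfrac{s_i}{t}x$, that this skew shift is a measure-preserving flow on the noise because $\eta(s,\cdot)$ is spatially stationary and white in time, and that it implements the spatial shift of $U(t,\cdot)$. Ergodicity of $U(t)$ is then reduced to ergodicity of this Gaussian flow, which, via the chaos decomposition and spatial Fourier transform in $L^2(\d s\otimes\hat f^{\otimes n})$, amounts to the slice $\{s:\sum_i s_i\xi_i=0\}$ having zero Lebesgue measure whenever every $\xi_i\neq 0$ --- exactly what $\hat f\{0\}=0$ guarantees, and the measure-zero geometry holds because the constraint $\sum_i s_i\xi_i=0$ contains at least one nontrivial linear relation among the $s_i$ whenever some $\xi_i\neq 0$. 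Your approach is shorter and avoids the Malliavin-derivative estimates entirely; it also gives stationarity essentially for free (the skew shift is measure preserving) and makes transparent that ergodicity of $U(t)$ is exactly ergodicity of a Gaussian skew-shift flow on the noise. The paper's approach is heavier but deliberate: the same Poincar\'e apparatus is the workhorse for the total-variation estimates in Section~\ref{Sec:TVD}, so it is developed once and reused, whereas your argument would have to be supplemented by that machinery anyway for the CLTs. One small point worth making explicit in your write-up: since $\sigma(U(t,\cdot))$ is invariant under $\tau_y$, ergodicity of the full noise flow indeed descends to the subsystem, but you should say so, rather than only ``a fortiori."
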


According to Theorem 1.1 in  Chen et al \cite{CKNP}, the
condition $\hat{f}\{0\}=0$ determines the spatial ergodicity of the solution to
\eqref{PAM}  with flat initial condition. In the case of delta initial condition, 
$\hat{f}\{0\}=0$ also implies the spatial ergodicity of $U$ according 
to Theorem \ref{th:ergodic}. 
For each fixed $N\ge \e$, we introduce the spatial average 
\begin{equation} \label{average}
	\mathcal{S}_{N,t}=\frac {1}{N^d} \int_{[0, N]^d} [ U(t,x) -1]\,\d x.
\end{equation}
Then, condition $\hat{f}\{0\}=0$, Theorem \ref{th:ergodic}, and the ergodic theorem together
imply the following law of large numbers: For every $t>0$,
\[
	\lim_{N\to\infty} \mathcal{S}_{N,t} = 0\qquad\text{a.s.\ and in $L^k(\Omega)$ for all $k\ge2$}.
\]

The main result of this paper is a corresponding central limit theorem (CLT),  which turns out
to hold in the strongest possible sense of convergence in total variation.
Let $\Z$ denote the standard Gaussian random variable, and 
recall that the total variation distance between random variables $X$ and $Y$ on $\R$ is defined as
\[
	d_{\rm TV}  (X\, ,Y)= \sup| \P(X\in B)- \P(Y\in B)|,
\]
where  the supremum is take over all Borel subsets $B$ of $\R$.

Recall that the condition $f(\R^d)<\infty$ implies a CLT for 
the spatial averages of the solution to \eqref{PAM} with flat/constant initial data
\cite[Theorem 1.1]{CKNP_b}. 
The situation is much more involved in the present setting where the initial condition is a delta mass.
In this setting, we first must analyze the asymptotic behavior of 
${\rm Var}(\mathcal{S}_{N,t})$ under different assumptions on the covariance measure $f$. 
In the case of a flat initial condition, the condition  $f(\R^d)<\infty$ implies that
the variance of the spatial average of the solution is of 
the order $N^{-d}$ as $N\to\infty$; see \cite[Proposition 5.2]{CKNP_b}. By contrast,
we will see in Section \ref{analyzevar} that, in the present setting,
the normalization of ${\rm Var}(\mathcal{S}_{N,t})$ depends on the 
detailed structure of the covariance measure $f$, as well as on the spatial dimension $d$.
Moreover, in order to prove the CLT, we appeal to the Malliavin-Stein method 
(see Proposition \ref{propTV} below), from which we will deduce 
how the covariance measure $f$ characterizes the CLT for the spatial average of $U(t)$
in various ways. In the case of a flat initial condition, it has been proved in \cite[Theorem 2.4]{CKNP_d} that the convergence rate for CLT in terms of total variation is $N^{-d/2}$, while for delta initial condition, 
the convergence rate for CLT is determined not only by spatial dimension $d$ but also by the 
behavior of $f$.

We start by introducing 
the following quantity associated with $f$:
\begin{align}\label{R(f)}
	\mathcal{R}(f):= \frac{1}{\pi^d}\int_0^{\infty}\d s 
	\int_{\R^d}\hat{f}(\d z)\prod_{j=1}^{d}\frac{1-\cos(sz_j)}{(sz_j)^2}.
\end{align}
The following theorem states that  the finiteness of $\mathcal{R}(f)$ ensures the CLT for the spatial average of $U(t)$ 
and the convergence rate is $N^{-1/2}$ regardless of the spatial dimension $d$.

\begin{theorem}\label{TVD3}
        If  $\mathcal{R}(f) <\infty$, then for all fixed $t>0$ 
        there exists $C=C(t)>0$ such that
        \[
	 	d_{\rm TV} \left(  \frac{ \mathcal{S}_{N,t}}{ \sqrt{{\rm Var}(\mathcal{S}_{N,t})}} 
		\, , \Z\right)  \le \frac C {\sqrt{N}}
		\qquad\text{for every $N\ge\e$}.        
	\]
\end{theorem}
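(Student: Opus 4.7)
The strategy is to apply the Malliavin--Stein inequality of Proposition \ref{propTV}: for a centered $F$ in the domain of the Malliavin derivative with $\sigma^2 := \Var(F) > 0$,
\[
	d_{\rm TV}\!\left(F/\sigma\, , \Z\right) \le \frac{2}{\sigma^2}\sqrt{\Var\bigl(\langle DF\,,\,-DL^{-1}F\rangle_{\HH}\bigr)}.
\]
Taking $F = \mathcal{S}_{N,t}$ with $\sigma_N^2 := \Var(\mathcal{S}_{N,t})$, the announced rate reduces to a lower bound $\sigma_N^2 \gtrsim c_t/N$ together with an upper bound $\Var\bigl(\langle D\mathcal{S}_{N,t},-DL^{-1}\mathcal{S}_{N,t}\rangle_{\HH}\bigr) \le C_t/N^3$; their ratio then produces the claimed $N^{-1/2}$.

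For the lower bound I would isolate the first Wiener chaos component of $\mathcal{S}_{N,t}$. Iterating \eqref{mild:U} shows that the first chaos of $U(t,x)-1$ has kernel $g_{1,t,x}(s,y) = \bm{p}_{s(t-s)/t}(y-\tfrac{s}{t}x)\mathbf{1}_{(0,t)}(s)$, so the first chaos $I_1(h_N)$ of $\mathcal{S}_{N,t}$ has kernel $h_N(s,y) = N^{-d}\int_{[0,N]^d}\bm{p}_{s(t-s)/t}(y-\tfrac{s}{t}x)\,\d x\cdot \mathbf{1}_{(0,t)}(s)$. Computing $\hat h_N$ (heat-kernel transform times a box), applying Plancherel, and substituting $s=t\tau$ followed by $u=N\tau$ yields
\[
	\Var(I_1(h_N)) = \frac{t}{N\pi^d}\int_0^N \d u\int_{\R^d}\e^{-t(u/N)(1-u/N)\|\xi\|^2}\prod_{j=1}^d\frac{1-\cos(u\xi_j)}{(u\xi_j)^2}\,\hat f(\d\xi).
\]
By dominated convergence, justified by $\mathcal{R}(f)<\infty$, the right-hand integral tends to $\pi^d\mathcal{R}(f)$ as $N\to\infty$, which is strictly positive because $f(\R^d)>0$, so $\sigma_N^2\ge\Var(I_1(h_N))\sim t\mathcal{R}(f)/N$.

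For the upper bound I would Malliavin-differentiate \eqref{mild:U}:
\[
	D_{s,y}U(t,x) = \bm{p}_{s(t-s)/t}\!\left(y-\tfrac{s}{t}x\right)U(s,y) + \int_{(s,t)\times\R^d}\!\!\bm{p}_{r(t-r)/t}\!\left(z-\tfrac{r}{t}x\right)D_{s,y}U(r,z)\,\eta(\d r\,\d z),
\]
together with an analogous equation for $D^2U$. Using \eqref{moment:U}, Burkholder--Davis--Gundy, and a fixed-point argument, one has $\Norm{D_{s,y}U(t,x)}_k\le C_{t,k}\bm{p}_{s(t-s)/t}(y-\tfrac{s}{t}x)$ and a corresponding product-of-heat-kernels bound for $D^2U$. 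The standard Malliavin--Stein chain (cf.\ \cite{CKNP_d}) then bounds the numerator variance by a spatio-temporal integral against two copies of $f$ of the $L^2$-norms of $D\mathcal{S}_{N,t}$ and $D^2\mathcal{S}_{N,t}$, and applying the same Plancherel-and-rescale procedure as in paragraph two, careful power counting delivers the required $N^{-3}$.

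The chief obstacle lies in this last power count: a naive Cauchy--Schwarz applied to $\|D\mathcal{S}_{N,t}\|^2$ alone produces only $N^{-2}$, which is insufficient, and the improvement to $N^{-3}$ hinges on exploiting the explicit two-heat-kernel form of $D^2\mathcal{S}_{N,t}$, which supplies the extra $N^{-1/2}$ of localization. Throughout, $\mathcal{R}(f)<\infty$ is exactly what renders the rescaled spectral integrals absolutely convergent and thus controls every scale of the argument uniformly.
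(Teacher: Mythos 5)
Your overall strategy agrees with the paper: apply the Malliavin--Stein bound, establish $\Var(\mathcal{S}_{N,t})\asymp N^{-1}$, and show the numerator variance decays like $N^{-3}$, so the ratio gives the rate $N^{-1/2}$. Your lower-bound argument via the first Wiener chaos is correct and in fact reproduces exactly what the paper computes as $V_N^{(1)}(t)$ in Lemma \ref{lem:AV:1}; since $V_N^{(2)}(t)\ge 0$ by \eqref{chi>0}, the inequality $\Var(\mathcal{S}_{N,t})\ge V_N^{(1)}(t)\sim t\mathcal{R}(f)/N$ does the job. The paper uses the stronger fact $\lim N\Var(\mathcal{S}_{N,t})=t\mathcal{R}(f)$ (Theorem \ref{th:AV:d>1}), but for the statement at hand your one-sided version suffices. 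And your first-derivative estimate $\Norm{D_{s,y}U(t,x)}_k\lesssim\bm{p}_{s(t-s)/t}(y-\tfrac{s}{t}x)$ is precisely Proposition \ref{pr:Du}.

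However, there is a genuine gap in your treatment of the numerator, and your diagnosis of ``the chief obstacle'' is a misconception. The paper \emph{never} estimates a second Malliavin derivative and does not need one. Writing $\mathcal{S}_{N,t}=\delta(v_N)$ with $v_N$ as in \eqref{ec1}, it decomposes $\langle D\mathcal{S}_{N,t},v_N\rangle_{\HH}$ via a stochastic Fubini argument into a double integral of products $U(s,y)U(s,y+z)$ plus a Skorohod integral whose integrand involves $U\cdot D_sU$. The variance of the first piece is controlled by the Poincar\'e covariance inequality \eqref{Poincare:Cov} (an application of Clark--Ocone), introducing an auxiliary time $r$, and the variance of the second piece is bounded via the It\^o--Walsh isometry and Cauchy--Schwarz, also introducing an extra time integral; both use only first-order derivative bounds. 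This yields $\Var(\langle D\mathcal{S}_{N,t},v_N\rangle_{\HH})\le 2N^{-4d}(\Phi_N^{(1)}+\Phi_N^{(2)})$, and the $N^{-3}$ rate arises because the three temporal variables $s_1,s_2,r$ each contribute a Jacobian factor $t/N$ under the rescaling $s\mapsto ts/N$, absorbing the would-be $N^3$; the heat-kernel products then Parseval into spectral integrals made convergent by $\mathcal{R}(f)<\infty$ via $\int\|z\|^{-1}\hat f(\d z)<\infty$ (Lemma \ref{lem:f:1}). What saves the factor of $N$ over the crude bound is not ``localization from $D^2\mathcal{S}_{N,t}$'' but the cancellation $\Var(Y)=\E[Y^2]-(\E Y)^2$ (both of order $N^{-2}$) being captured directly by the Clark--Ocone decomposition. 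Your sketch announces ``careful power counting delivers the required $N^{-3}$'' without carrying out this count, which is where all the real work of the proof lies.
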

The asymptotic behavior of ${\rm Var}(\mathcal{S}_{N,t})$ will be discussed in detail
in Theorem \ref{th:AV:d>1} below. It follows from that analysis and from Theorem  \ref{TVD3} that, if
$\mathcal{R}(f) <\infty$, then  
\[
        \sqrt N \mathcal{S}_{N,t} = \frac{1}{N^{d-(1/2)}} \int_{[0, N]^d} [ U(t\,,x) -1]\,\d x 
	\xrightarrow{\text{\rm d}\,}  {\rm N}(0\,,t\mathcal{R}(f))      
	\qquad \text{ as $N\to\infty$,}
\]
where ``$\xrightarrow{\text{\rm d}\,}$'' denotes convergence in distribution.

We will see in Lemma \ref{lem:f:1} below that
$\mathcal{R}(f)<\infty$ only if $d\geq 2$. Thus, the preceding CLT has no content in dimension one.
When $d=1$, we are able to derive a CLT under the additional constraint $f(\R)<\infty$.
According to Theorem 1.1 in  Chen et al \cite{CKNP_b}, the finiteness condition $f(\R)<\infty$ 
implies a CLT for the solution to \eqref{PAM} with flat initial condition. The same holds in the
present setting of delta initial condition, except the rate is different (and so are many
of the underlying arguments).

\begin{theorem}[$d=1$]\label{TVD1}
	If $f(\R)<\infty$ and $d=1$, then for all fixed $t>0$ there exists
	$C=C(t)>0$ such that 
	\[
		d_{\rm TV} \left(  \frac{ \mathcal{S}_{N,t}}{ \sqrt{{\rm Var}(\mathcal{S}_{N,t})}} \, , \Z\right)
		\leq  C \sqrt{\frac{\log N}{N}}\qquad\text{for all $N\ge\e$}.
	\]  
\end{theorem}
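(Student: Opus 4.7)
The plan is to invoke the Malliavin--Stein inequality of Proposition \ref{propTV}: for every mean-zero $F\in\mathbb{D}^{1,2}$ with positive variance,
\[
d_{\rm TV}\!\left(\frac{F}{\sqrt{\Var F}}\,,\Z\right)\le \frac{2}{\Var F}\sqrt{\Var\!\left(\<DF,-DL^{-1}F\>_{\HH}\right)},
\]
specialized to $F=\mathcal{S}_{N,t}$. The variance analysis in Section \ref{analyzevar} shows $\sigma_N^2:=\Var(\mathcal{S}_{N,t})\asymp \log N/N$ when $d=1$ and $f(\R)<\infty$, so it suffices to prove that
\[
\Var\!\left(\<D\mathcal{S}_{N,t}\,,-DL^{-1}\mathcal{S}_{N,t}\>_{\HH}\right) \le C\,\frac{(\log N)^{3}}{N^{3}}.
\]

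Differentiating \eqref{mild:U} shows that the Malliavin derivative satisfies the linear SPDE
\[
D_{s,y}U(t,x)=\bm{p}_{s(t-s)/t}\!\left(y-\tfrac{s}{t}x\right)U(s,y)+\int_{(s,t)\times\R}\bm{p}_{r(t-r)/t}\!\left(z-\tfrac{r}{t}x\right)D_{s,y}U(r,z)\,\eta(\d r\,\d z)
\]
for $0<s<t$, with $D_{s,y}U(t,x)\equiv 0$ otherwise, and a similar equation holds for $D_{s_1,y_1}D_{s_2,y_2}U$. Picard iteration combined with the moment bound \eqref{moment:U}, Burkholder--Davis--Gundy, and Dalang's condition yields $L^p$ estimates of the form $\|D_{s,y}U(t,x)\|_p\le C\,\bm{p}_{s(t-s)/t}(y-sx/t)$ and a corresponding product-type bound for the second derivative.

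Inserting $D\mathcal{S}_{N,t}=N^{-1}\int_0^N DU(t,x)\,\d x$ and using the Mehler representation $-DL^{-1}F=\int_0^\infty e^{-r}P_r DF\,\d r$, the quantity to be controlled expands into a quadruple spatial integral of expected products of first and second Malliavin derivatives contracted against $f\otimes f$. The moment bounds above replace those products by four heat kernels; using $f(\R)<\infty$ to absorb the two $f$-factors by their total mass and collapsing the spatial Gaussians via
\[
\int_\R \bm{p}_{s(t-s)/t}\!\left(y-\tfrac{s}{t}x\right)\bm{p}_{s(t-s)/t}\!\left(y-\tfrac{s}{t}x'\right)\d y=\bm{p}_{2s(t-s)/t}\!\left(\tfrac{s}{t}(x-x')\right),
\]
the remaining estimate reduces, after the change of variables $u=s/t$, to bounding $N^{-4}\int_{[0,N]^4}(1\wedge|x_1-x_2|^{-1})(1\wedge|x_3-x_4|^{-1})\d\bm x\asymp (\log N)^2/N^2$ multiplied by an extra $\log N$ coming from the time integrations, producing the claimed $(\log N)^3/N^3$ bound.

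The main obstacle is the precise tracking of these logarithms. In $d=1$ with $f(\R)<\infty$ the quantity $\mathcal{R}(f)$ of \eqref{R(f)} is infinite, since $\hat f(0)=f(\R)>0$ produces a non-integrable $|z|^{-1}$ singularity at the origin; the same obstruction manifests in the variance computation as the borderline integrability of $|x-x'|^{-1}$ on $[0,N]$. This singularity inflates both $\sigma_N^2$ (by one power of $\log N$) and the Malliavin--Stein numerator (by three powers), and keeping exactly three logs rather than more is what produces the rate $\sqrt{\log N/N}$ rather than something slower than the $N^{-1/2}$ available in higher dimensions.
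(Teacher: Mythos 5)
Your overall strategy is the same as the paper's: invoke the Malliavin--Stein bound of Proposition~\ref{propTV}, use the variance asymptotics $\Var(\mathcal{S}_{N,t})\asymp N^{-1}\log N$ from Theorem~\ref{th:AV:d=1}, and show the numerator in the Stein bound is $O((\log N)^3/N^3)$. However, there are two substantive gaps in the sketch.

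First, you use the Mehler/$-DL^{-1}$ form of the bound, so your numerator is $\Var\bigl(\<D\mathcal{S}_{N,t},\,-DL^{-1}\mathcal{S}_{N,t}\>_{\HH}\bigr)$; estimating this (say via the Poincar\'e inequality) unavoidably produces second Malliavin derivatives $D^2U$. You acknowledge this, but neither the equation for $D^2U$ nor the claimed ``product-type bound'' is written out, and none of it appears in the paper. The paper sidesteps $D^2U$ entirely by choosing $v$ in Proposition~\ref{propTV} to be the explicit kernel $v_N(s,y)=N^{-d}U(s,y)\int_{[0,N]^d}\bm{p}_{s(t-s)/t}(y-sx/t)\,\d x$ read off the mild equation \eqref{mild:U}, which gives $\mathcal{S}_{N,t}=\delta(v_N)$ directly; then $\Var(\<D\mathcal{S}_{N,t},v_N\>_{\HH})$ is handled with only the first-derivative bound of Proposition~\ref{pr:Du}, Cauchy--Schwarz, and the decomposition into $\Phi_N^{(1)}$ and $\Phi_N^{(2)}$. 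If you insist on the $-DL^{-1}$ route, you owe the reader a Proposition~\ref{pr:Du}-type estimate for $D^2U$, which is a genuine additional piece of work.

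Second, and more seriously, the final reduction does not close arithmetically. You claim the estimate reduces to $N^{-4}\int_{[0,N]^4}(1\wedge|x_1-x_2|^{-1})(1\wedge|x_3-x_4|^{-1})\,\d\bm{x}\asymp(\log N)^2/N^2$ times an extra $\log N$, ``producing the claimed $(\log N)^3/N^3$''. But $(\log N)^2/N^2$ times $\log N$ is $(\log N)^3/N^2$, not $(\log N)^3/N^3$; a full factor of $N^{-1}$ is missing. In the paper's argument this factor appears when two of the outer spatial variables are collapsed against heat kernels (producing $t^2/(s_1s_2)$ singularities, which are what actually generate the logarithms from the time integration), and the remaining two are treated via Plancherel with $\widehat{I_N*\tilde I_N}(z)\propto(1-\cos(Nz))/(Nz)^2$; the change of variables $Nz\mapsto z$ then contributes the crucial $N^{-1}$. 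Your reduction to a product of four $(1\wedge|\cdot|^{-1})$ kernels collapses those distinct roles into one picture and overcounts by a power of $N$, so as written the argument proves a weaker rate than the theorem asserts.
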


In particular, Theorem \ref{TVD1} and Theorem \ref{th:AV:d=1} below 
together imply that, if $d=1$ and $f=a\delta_0$ for some $a>0$, then
 \[
	\sqrt{\frac{N}{\log N}}\,\mathcal{S}_{N,t}=
	\frac{1}{\sqrt{N\log N}} \int_0^N [ U(t,x) -1]\,\d x
	\xrightarrow{\text{\rm d}\,}  {\rm N}(0\,, 2tf(\R))  = {\rm N}(0\,, 2ta)      
         \quad \text{as $N\to\infty$.}
\]
On the other hand, if the measure $f$ is finite, as well as a
Rajchman measure\footnote{We recall that a finite measure $f$ is \emph{Rajchman} if
its Fourier transform 
$\R^d\ni x\mapsto \hat{f}(x) := \int_{\R^d} \e^{ix\cdot y}\,f(\d y)$ 
vanishes at infinity; that is, $\lim_{\|x\|\to\infty}\hat{f}(x)=0$.
Lyons \cite{Lyons} discusses a survey of the rich subject of Rajchman measures. 
Note that, in the present setting, $\hat{f}:\R^d\to\R$
is a non-negative, non-negative definite, 
uniformly bounded, and continuous function.}, then
\[
	\sqrt{\frac{N}{\log N}}\,\mathcal{S}_{N,t}=
	\frac{1}{\sqrt{N\log N}} \int_0^N [ U(t\,,x) -1]\,\d x
	\xrightarrow{\text{\rm d}\,}  {\rm N}(0\,, tf(\R))      
         \quad \text{as $N\to\infty$.}
\]

The above results give a more or less comprehensive idea of the CLT for $U(t)$
when $\mathcal{R}(f)<\infty$, especially when the measure $f$ is in addition finite.
By contrast with this case, there does not seem to be a canonical description of a CLT
when $\mathcal{R}(f)=\infty$. This condition occurs for any ambient dimension $d$, for example when
$f$ is given by a Riesz kernel; see Remark \ref{Rieszinfty} below. 
In the following, we will present the CLT specifically in the case that $f$ is given by Riesz kernel
that satisfies Dalang's condition, $\Upsilon(1)<\infty$;
that is,  when $f(\d x)= \|x\|^{-\beta}\,\d x$ where $0<\beta< 2\wedge d$. 
In contrast with what happens in the case that the initial condition is flat 
(see Huang et al \cite{HNVZ2019}),  the CLT for $U$ undergoes a multiple phase transition from $\beta\in(0\,,1)$
to $\beta=1$ to $\beta\in(1\,,d\wedge 2)$.

\begin{theorem}\label{TVD2}
	If $f(\d x)= \|x\|^{-\beta}\d x$ for some $\beta\in(0\,,d\wedge 2)$, then for all fixed $t>0$
	there exists $C=C(t)>0$ such that for all $N \geq \e$,
	\[
		d_{\rm TV} \left(  \frac{ \mathcal{S}_{N,t}}{ \sqrt{{\rm Var}(\mathcal{S}_{N,t})}} \, , \Z\right) \le
		\begin{cases}
			CN^{-\beta/2}& \text{if $\beta\in(0\,,1)$},\\
			C\sqrt{\log(N)/N} &\text{if $\beta=1$},\\
			CN^{-(2-\beta)/2}& \text{if $\beta\in(1\,,  2)$}. 
		\end{cases}
	\]
\end{theorem}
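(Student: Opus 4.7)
The plan is to realise $\mathcal{S}_{N,t}$ as a Skorohod integral, apply the Malliavin-Stein bound of Proposition \ref{propTV}, and then perform careful asymptotic analysis of the resulting Riesz-weighted integrals, in which the phase transition at $\beta=1$ will emerge from a competition of spatial scales.

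By a stochastic Fubini argument applied to \eqref{mild:U} integrated over $x\in[0,N]^d$, one has $\mathcal{S}_{N,t}=\delta(\Phi_N)$ with
\[
\Phi_N(s,y) := \frac{\mathbf{1}_{(0,t)}(s)}{N^d}\int_{[0,N]^d}\bm{p}_{s(t-s)/t}\!\left(y-\tfrac{s}{t}x\right)U(s,y)\,\d x,
\]
and Proposition \ref{propTV} gives
\[
d_{\mathrm{TV}}\!\left(\mathcal{S}_{N,t}/\sigma_N,\,\Z\right)
 \le \frac{C}{\sigma_N^2}\sqrt{\Var\bigl(\langle D\mathcal{S}_{N,t},\Phi_N\rangle_{\mathfrak{H}}\bigr)},
 \qquad \sigma_N^2 := \Var(\mathcal{S}_{N,t}),
\]
where $\langle g,h\rangle_{\mathfrak H}=\int_0^\infty\!\d s\iint g(s,y)h(s,y')\|y-y'\|^{-\beta}\,\d y\,\d y'$. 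The variance asymptotics that supply the matching normalisations in each of the three regimes are furnished by Theorem \ref{th:AV:d>1}, so the work lies in bounding $\Var(\langle D\mathcal{S}_{N,t},\Phi_N\rangle_{\mathfrak H})$ sharply.

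To obtain that bound I would differentiate \eqref{mild:U} in the Malliavin sense to get a linear equation for $D_{r,z}U(t,x)$ whose inhomogeneous term is $\bm{p}_{r(t-r)/t}(z-(r/t)x)\,U(r,z)$, and iterate this together with the moment estimate \eqref{moment:U} to produce uniform $L^k$-estimates for $D_{r,z}U$. A second Malliavin differentiation and the $L^2$-isometry then rewrite $\Var(\langle D\mathcal{S}_{N,t},\Phi_N\rangle_{\mathfrak H})$ as a spatial integral over $[0,N]^{4d}$ of four Brownian-bridge densities $\bm{p}_{s_i(t-s_i)/t}$ against two Riesz kernels $\|\cdot\|^{-\beta}$, multiplied by a uniformly bounded moment factor. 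Changing variables $u_i = y_i-(s_i/t)x_i$ absorbs the bridge densities into probability measures concentrating each $u_i$ at scale $\sqrt{s_i}$, and replaces the Riesz kernels by factors of the form $\|u_i-u_j + (s_i/t)x_i-(s_j/t)x_j\|^{-\beta}$.

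The analytic core is then to show that the resulting integral has the correct order in each regime so that, after dividing by $\sigma_N^2$ and taking a square root, the three stated total-variation rates come out. The main obstacle is the source of the phase transition: the effective long-range Riesz interaction depends on the competition between the bridge scale $\sqrt{s_i}$ and the averaging scale $(s_i/t)N$. For small $\beta$ the dominant contribution comes from $(s/t)|x-x'|\gg\sqrt{s}$ and yields a power-law decay of order $N^{-\beta}$, while for larger $\beta$ the integrable small-$s$ regime dominates and gives the complementary decay of order $N^{-(2-\beta)}$; at the critical value $\beta=1$ the two contributions balance and produce the logarithmic correction. I would handle this by splitting each time integral into a near-zero and a far-from-zero piece, carefully tracking which piece contributes the leading power of $N$ in each regime, and reconciling with the variance asymptotics of $\sigma_N^2$ so that the final quotient yields the claimed uniform total-variation rate across all three cases.
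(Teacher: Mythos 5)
Your plan follows the same route the paper takes: express $\mathcal{S}_{N,t}$ as a divergence, invoke Proposition~\ref{propTV}, bound the resulting variance using the Malliavin-derivative estimate of Proposition~\ref{pr:Du} together with the moment bound~\eqref{moment:U}, pass to Gaussian-random-variable form and read the three regimes out of the competition between the bridge scale $\sqrt{s}$ and the averaging scale $sN/t$ (the paper does this via the split $\Phi_N^{(1)}+\Phi_N^{(2)}$ in~\eqref{1+2} followed by Lemmas~\ref{max}, \ref{smallbig}, \ref{beta>1}--\ref{2beta=1}). One correction: the matching variance normalisations must come from Theorem~\ref{Riesz}, the dedicated Riesz-kernel variance theorem, not from Theorem~\ref{th:AV:d>1}; for Riesz kernels $\mathcal{R}(f)=\infty$ (Remark~\ref{Rieszinfty}), so Theorem~\ref{th:AV:d>1} only tells you $\lim_{N}NV_N(t)=\infty$ and does not supply the three $\beta$-dependent rates.
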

As a consequence of Theorem \ref{TVD2} and Theorem \ref{Riesz} below, we obtain 
the following CLTs: \\

\noindent \textbf{(A)} If $0<\beta<1$,  then
\[
        N^{\beta/2}\mathcal{S}_{N,t} = 
        \frac{1}{N^{d-(\beta/2)}} \int_{[0, N]^d} [ U(t\,,x) -1]\,\d x
        \xrightarrow{\text{\rm d}\,}  {\rm N}(0\,, t\sigma_{0, \beta, d})      
         \quad \text{as $N\to\infty$;}
\]
\noindent\textbf{(B)} If $\beta=1$,  then
\[
        \sqrt{\frac{N}{\log N}}\,\mathcal{S}_{N,t}=
        \frac{1}{N^{d-(1/2)}\sqrt{\log N}} \int_{[0, N]^d} [ U(t\,,x) -1]\,\d x
        \xrightarrow{\text{\rm d}\,}  {\rm N}(0\,, t\sigma_{1, \beta, d})      
	\quad \text{as $N\to\infty$;\ and}
\]
\noindent\textbf{(C)} If $1<\beta<2\wedge d$,  then
\[
        N^{1-(\beta/2)}\mathcal{S}_{N,t}=
        \frac{1}{N^{d-1+(\beta/2)}} 
        \int_{[0, N]^d} [ U(t\,,x) -1]\,\d x
        \xrightarrow{\text{\rm d}\,}  {\rm N}(0\,, t^{2-\beta}\sigma_{2, \beta, d})      
	\quad \text{as $N\to\infty$;}
\]
where  $\sigma_{0, \beta, d}$,
$\sigma_{1, \beta, d}$, and $\sigma_{2, \beta, d}$ are non-degenerate 
and defined explicitly in Theorem \ref{Riesz}.

The logarithmic correction that appears in Theorems \ref{TVD1} and \ref{TVD2} [$\beta=1$]
is related to the transition functions of the Brownian bridge;
see \eqref{PPPP}. Indeed, the conditional probability density $\bm{p}_{s(t-s)/t}(sx/t)$ 
becomes $t/s$ after a change of variable in $x$. The resulting 
singularity at $s=0$ ultimately give rises to the $\log N$ factor in Theorems \ref{TVD1} and \ref{TVD2}.

\begin{cremark}
	The convergence rates for the total variation distance in Theorems \ref{TVD3}, \ref{TVD1},
	and \ref{TVD2} are natural. Indeed, one can observe that in each case the convergence rate 
	for the total variation distance is of the same order as $\sqrt{\Var(\mathcal{S}_{N,t})}$ 
	as $N\to\infty$; see Theorems \ref{th:AV:d>1},  \ref{th:AV:d=1} and \ref{Riesz}. 
	A similar phenomenon can be observed in the context of spatial CLT for other related SPDEs
	\cite{HNV2018, HNVZ2019, DNZ2018, GNZ20, NZ2020, CKNP_c, CKNP_d}. 
	See \cite{NSZ20} for recent advances on the parabolic Anderson model driven by a
	Gaussian noise that is colored in both its space and time variables.
\end{cremark}

\begin{cremark}
One can follow the method in \cite{CKNP_c} to prove the functional CLT in time corresponding to the CLTs below Theorems  \ref{TVD3}, \ref{TVD1} and \ref{TVD2} respectively.  For instance, one can use the argument in \cite[Proposition 4.1]{CKNP_c} to compute the covariance of the limit Gaussian process and then prove the convergence of finite dimensional distributions and tightness. We leave these for interested readers. 
\end{cremark}

The organization  of this paper is as follows. We establish the well-posedness and spatial stationarity for the solution to \eqref{mild:U} in Theorem \ref{th:U} in Section \ref{wellpose}. The ergodicity property in Theorem \ref{th:ergodic} is proved in Section \ref{ergodicity}. Section \ref{analyzevar} is devoted to analyzing the asymptotic behavior of the variance of spatial average. Moreover, we present the estimates on total variation distance in Theorems \ref{TVD3}, \ref{TVD1} and \ref{TVD2} in Section \ref{Sec:TVD}. And the last section is an Appendix that contains a few technical lemmas that are used throughout the paper.

Let us conclude the Introduction by setting forth some notation that will be used
throughout. 
We write ``$g_1(x)\lesssim g_2(x)$ for all $x\in X$'' when there exists a real number
$L$ such that $g_1(x)\le Lg_2(x)$ for all $x\in X$.
Alternatively, we might write
``$g_2(x)\gtrsim g_1(x)$
for all $x\in X$.'' By ``$g_1(x)\asymp g_2(x)$ for all $x\in X$'' we mean that
$g_1(x)\lesssim g_2(x)$ for all $x\in X$ and $g_2(x)\lesssim g_1(x)$ for all $x\in X$.
Finally,
``$g_1(x)\propto g_2(x)$ for all $x\in X$'' means that there exists a real number $L$
such that $g_1(x)=L g_2(x)$ for all $x\in X$.
For every $Z\in L^k(\Omega)$, we write $\|Z\|_k$ instead of the more cumbersome
$\|Z\|_{L^k(\Omega)}$. 

\section{Preliminaries}

\subsection{The BDG inequality}

Let us  collect a few facts about the optimal constants $\{z_k\}_{k\ge2}$
of the Burkholder--Davis--Gundy [BDG] inequality.

First, recall from the BDG inequality that for every continuous $L^2(\Omega)$-martingale
$\{M_t\}_{t\ge0}$,
\[
	\E\left(|M_t|^k\right) \le z_k^k\E\left( \<M\>_t^{k/2}\right)
	\qquad\text{for all $t\ge0$ and $k\ge2$}.
\]
Davis \cite{Davis1976} has shown that every $z_k$ is the largest positive root of a certain special function.
In particular, that $z_k$ is the largest positive root of the monic Hermite polynomial
${\it He}_k$ when $k$ is an even integer. These remarks and the appendix of Carlen and Kree \cite{CarlenKree1991}
together imply the following:
\begin{equation}\label{z:UB}
	z_2=1,\quad
	z_4=\sqrt{3+\sqrt{6}}\approx 2.334,\quad\text{and}\quad
	\sup_{k\ge2}\frac{z_k}{\sqrt k}=\lim_{k\to\infty}\frac{z_k}{\sqrt k}=2.
\end{equation}
Moreover, the the special case where the martingale $M$ is Brownian motion
shows us that
\begin{equation}\label{z:LB}
	z_k \ge \|{\rm N}(0\,,1)\|_k = \sqrt{2}\left[ \frac{1}{\sqrt\pi}
	\Gamma\left(\frac{k+1}{2}\right)\right]^{1/k}
	\qquad\text{for all $k\ge2$}.
\end{equation}
Therefore, we learn from the Stirling formula that $z_k$ is bounded from above and from below
by non-degenerate multiples of $\sqrt{k}$, uniformly
for all $k\ge2$.

\subsection{The Clark-Ocone formula}
Define $\mathcal{H}_0$ to be the reproducing kernel Hilbert space, spanned by all real-valued functions on $\R^d$,
that corresponds to the inner product 
$\<\phi\,,\psi\>_{\mathcal{H}_0}:=\<\phi\,,\psi*f\>_{L^2(\R^d)}$, and set
$\mathcal{H} := L^2(\R_+\times\mathcal{H}_0)$.
The Gaussian family $\{ W(h)\}_{h \in \mathcal{H}}$ formed by the Wiener integrals
\[
	W(h)= \int_{\R_+\times \R^d}  h(s\,,x)\, \eta(\d s\,\d x)
	\qquad[h\in\mathcal{H}]
\]
defines an  {\it isonormal Gaussian process} on the Hilbert space $\mathcal{H}$.
In this framework we can develop the Malliavin calculus (see, for instance,  \cite{Nualart}).
We denote by $D$ the Malliavin derivative operator, and by $\delta$ the corresponding divergence operator
whose domain in $L^2(\Omega)$ is denoted by $\text{\rm Dom}[\delta]$.

Let $\{\mathcal{F}_s\}_{s \geq 0}$ denote the filtration generated by the infinite dimensional
white noise $t\mapsto\eta(t)$; that is, $\mathcal{F}_t$ is the filtration generated by all 
Wiener integrals of the form $\int_{(0,t)\times\R^d}\phi\,\d\eta$ as $\phi$ ranges over all
test functions of rapid decrease [which are easily seen to be dense in $\mathcal{H}$]. 
A basic idea used in this paper
is the Clark-Ocone formula  (see \cite[Proposition 6.3]{CKNP}),
\begin{equation} \label{Clark-Ocone}
	F= \E [F]  + \int_{\R_+\times\R^d}
	\E\left[D_{s,y} F \mid \mathcal{F}_s\right] \eta(\d s \, \d z),
\end{equation}
valid a.s.\ for every random variable $F$ in the Gaussian
Sobolev space $\mathbb{D}^{1,2}$. Using Jensen's inequality for conditional expectation, this equality leads 
immediately to the following Poincar\'e-type inequality:
\begin{equation} \label{Poincare:Cov}
	|\Cov(F\,, G)| \le \int_0^{\infty}\d s \int_{\R^d}\d y\int_{\R^d}f(\d y')\
	\left\| D_{s,y} F  \right\|_2
	\left\| D_{s,y'+y}G \right\|_2 
\end{equation}
for $F,G\in\mathbb{D}^{1,2}$ provided that $DF$ and $DG$ are real-valued random variables.
\subsection{The Malliavin--Stein method}
	
Recall that the total variation distance between two Borel probability
measures $\mu$ and $\nu$ on $\R$ is defined as
\[
	d_{\rm TV}  (\mu\, ,\nu)= \sup | \mu(B)- \nu(B)|,
\]
where  the supremum is taken over all Borel subsets $B$ of $\R$. 
We might abuse notation and write
$d_{\rm TV}(F\,,G)$, $d_{\rm TV}(F\,,\nu)$, or $d_{\rm TV}(\mu\,,G)$
instead of $d_{\rm TV}(\mu\,,\nu)$ whenever the laws of $F$ and $G$
are respectively $\mu$ and $\nu$. 

A combination of Malliavin calculus and Stein's method for normal approximations leads to the following
bound on the total variation distance (see  \cite[Theorem 8.2.1]{NN}):

\begin{proposition}  \label{propTV}
	Suppose $F\in \mathbb{D}^{1,2}$ satisfies $\E[F^2]=1$ and  $F=\delta(v)$ for 
	some element $v$ in the domain in  $L^2(\Omega)$  of the divergence operator $\delta$.  Then,
	\begin{equation} \label{SM1}
		d_{\rm TV} (F\,,  {\rm N}(0\,,1)) \le  
		2  \sqrt{ {\rm Var} \left ( \langle DF \,, v \rangle_{\mathcal{H}} \right) }.
	\end{equation}
\end{proposition}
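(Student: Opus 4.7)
The plan is to combine Stein's method for normal approximation with the duality relation of Malliavin calculus. First, I would invoke Stein's lemma, which represents the total variation distance as
\[
d_{\rm TV}(F,\mathrm{N}(0,1)) = \sup_{h}\bigl|\E[f_h'(F) - Ff_h(F)]\bigr|,
\]
where the supremum runs over indicator-centered test functions $h = \mathbf{1}_B - \P(\Z\in B)$ and $f_h$ is the bounded solution of the Stein equation $f'(x) - xf(x) = h(x)$. The classical Stein estimates give $\|f_h'\|_\infty \le 2$ uniformly over such $h$, and this is where the constant $2$ in the bound will come from.

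Next, I would rewrite $\E[Ff_h(F)]$ using the hypothesis $F=\delta(v)$ together with the Malliavin duality (integration by parts) formula and the chain rule $Df_h(F) = f_h'(F)\,DF$:
\[
\E[Ff_h(F)] = \E[\delta(v)f_h(F)] = \E\bigl[\langle v, Df_h(F)\rangle_{\mathcal{H}}\bigr] = \E\bigl[f_h'(F)\,\langle DF,v\rangle_{\mathcal{H}}\bigr].
\]
Substituting this back yields
\[
\E[f_h'(F) - Ff_h(F)] = \E\bigl[f_h'(F)\bigl(1 - \langle DF,v\rangle_{\mathcal{H}}\bigr)\bigr].
\]

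A key observation, again via Malliavin duality applied in the opposite direction, is that
\[
\E\bigl[\langle DF,v\rangle_{\mathcal{H}}\bigr] = \E[F\,\delta(v)] = \E[F^2] = 1,
\]
so the random variable $1-\langle DF,v\rangle_{\mathcal{H}}$ is centered. Combining the Stein bound $\|f_h'\|_\infty \le 2$ with the Cauchy--Schwarz inequality then gives
\[
\bigl|\E[f_h'(F) - Ff_h(F)]\bigr| \le 2\,\E\bigl|1 - \langle DF,v\rangle_{\mathcal{H}}\bigr| \le 2\sqrt{\Var(\langle DF,v\rangle_{\mathcal{H}})},
\]
which is the desired inequality after taking the supremum over $h$.

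The main obstacle here is really a black box rather than a hurdle in this paper: the bound $\|f_h'\|_\infty \le 2$ for the Stein solution associated with indicator test functions is a classical and moderately delicate analytical estimate (see, for instance, \cite{NN}), and I would simply cite it. Beyond that, the argument is essentially algebraic: everything hinges on the compatibility between the Malliavin derivative and the divergence $\delta$, together with the chain rule, both of which require only $F\in\mathbb{D}^{1,2}$ and $v\in\mathrm{Dom}[\delta]$, exactly the hypotheses of the proposition.
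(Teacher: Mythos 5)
The paper itself does not prove Proposition~\ref{propTV} but simply cites it to \cite[Theorem 8.2.1]{NN}; your argument reproduces the standard Malliavin--Stein proof found there and in Nourdin--Peccati, and it is correct. The only point you pass over silently is that the Stein solution $f_h$ is merely Lipschitz (with at most one non-$C^1$ point), so invoking the chain rule $Df_h(F)=f_h'(F)\,DF$ to justify $f_h(F)\in\mathbb{D}^{1,2}$ requires either absolute continuity of the law of $F$ or a smoothing/approximation step; this is a standard technicality that the cited reference handles and is worth flagging.
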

	 


\section{Existence, uniqueness and stationarity: proof of Theorem \ref{th:U}}\label{wellpose}

The proof of Theorem \ref{th:U} follows a route that is nowadays standard.
Therefore, we  sketch the bulk argument, enough to make sure that the numerology of
\eqref{moment:U} is explained in sufficient detail.
Also, the proof does require one technical lemma that we state and prove next. The following identity will be used several times later on:
\begin{align}\label{Fourier}
     \left( \bm{p}_r*f\right)(x)
     = \frac{1}{(2\pi)^d}\int_{\R^d}\e^{-r\|y\|^2/2}\e^{ix\cdot y}\hat{f}(\d y), \quad \text{for all $r>0$ and $x\in \R^d$}.
\end{align}
Since  $\bm{p}_r$ is a test function of rapid decrease for every $r>0$, the above identity follows from the very definition of $\hat{f}$.

Recall the function $\Upsilon$ defined in \eqref{Upsilon}.
\begin{lemma}\label{lem:p*f}
	$\int_0^t\exp\{-\beta \{s\wedge(t-s)\}\}(\bm{p}_{2s(t-s)/t}*f)(0)\,
	\d s\le 4\Upsilon(2\beta)$ for every $t,\beta>0$.
\end{lemma}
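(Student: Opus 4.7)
The plan is to use the Fourier representation \eqref{Fourier} to pull the time integral inside, use symmetry of the integrand about $s=t/2$ to reduce to $s\in[0\,,t/2]$, bound $s(t-s)/t$ from below by $s/2$ on that subinterval, and then compute explicitly.

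First, I would apply \eqref{Fourier} with $r=2s(t-s)/t$ to write
\[
    (\bm{p}_{2s(t-s)/t}*f)(0) = \frac{1}{(2\pi)^d}\int_{\R^d} \e^{-s(t-s)\|y\|^2/t}\,\hat{f}(\d y).
\]
The integrand on the left-hand side is symmetric under $s\mapsto t-s$, since both $s\wedge(t-s)$ and $s(t-s)/t$ are. Thus the integral equals
\[
    2\int_0^{t/2} \e^{-\beta s}(\bm{p}_{2s(t-s)/t}*f)(0)\,\d s.
\]

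Next, on $s\in[0\,,t/2]$, we have $t-s\ge t/2$, and hence $s(t-s)/t\ge s/2$. Since $\hat f$ is a nonnegative measure and $\e^{-c\|y\|^2}$ is decreasing in $c\ge 0$, this gives the pointwise bound $(\bm{p}_{2s(t-s)/t}*f)(0)\le (\bm{p}_{s}*f)(0)$ for $s\in[0\,,t/2]$. Extending the outer integral from $[0\,,t/2]$ to $[0\,,\infty)$ (the integrand is non-negative) and then swapping the two integrals by Fubini--Tonelli yields
\[
    \int_0^t \e^{-\beta(s\wedge(t-s))}(\bm{p}_{2s(t-s)/t}*f)(0)\,\d s \le \frac{2}{(2\pi)^d}\int_{\R^d}\hat f(\d y)\int_0^\infty \e^{-s(\beta+\|y\|^2/2)}\,\d s.
\]

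The inner integral equals $(\beta+\|y\|^2/2)^{-1} = 2/(2\beta+\|y\|^2)$, so the right-hand side is exactly $4\Upsilon(2\beta)$ by the definition \eqref{Upsilon}. There is no real obstacle here; the only mildly non-obvious step is noticing the symmetry about $s=t/2$, which is essential because the elementary bound $s(t-s)/t\ge s/2$ holds only on half of the interval. Everything else is a direct computation using the Fourier identity and Fubini.
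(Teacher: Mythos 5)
Your proof is correct and follows essentially the same route as the paper: it uses the Fourier representation \eqref{Fourier}, the bound $s(t-s)/t\ge\tfrac12(s\wedge(t-s))$, symmetry about $s=t/2$, and finally extends the time integral to $[0,\infty)$. The only cosmetic difference is that you invoke symmetry before the pointwise bound while the paper applies the pointwise bound on all of $[0,t]$ first and then folds the integral in half; the two orderings are equivalent.
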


\begin{proof}
	We  apply the identity \eqref{Fourier} with $r=2s(t-s)/t$
	in order to find that
	\begin{align*}
		\left(\bm{p}_{2s(t-s)/t}*f\right)(0)
		=   \frac{1}{(2\pi)^d}\int_{\R^d} \e^{
		-s(t-s)\|y\|^2/t}\hat{f}(\d y)\,
		\le\frac{1}{(2\pi)^d}\int_{\R^d}\exp\left(
		-\frac{s\wedge(t-s)}{2}\|y\|^2\right)\hat{f}(\d y),
	\end{align*}
	using the elementary fact that $s(t-s)/t\ge \frac12[s\wedge(t-s)]$.
	Integration and symmetry together imply
	\[
		\int_0^t \e^{-\beta\{s\wedge(t-s)\}}\left(\bm{p}_{2s(t-s)/t}*f\right)(0)\,\d s
		\le \frac{2}{(2\pi)^d}\int_0^{t/2}\e^{-\beta s}\,\d s\int_{\R^d}\
		\e^{-s\|y\|^2/2}\hat{f}(\d y).
	\]
	The  bound $\int_0^{t/2}(\,\cdots)\le\int_0^\infty(\,\cdots)$ yields the lemma.
\end{proof}

With Lemma \ref{lem:p*f} under way, we can start the proof of Theorem \ref{th:U}.

\begin{proof}[Proof of Theorem \ref{th:U} (part 1): Existence and uniqueness.]
	Throughout the proof, define
	\begin{equation}\label{beta:eps:k}
		\beta_{\varepsilon,k}:= \frac12\Upsilon^{-1}\left(\frac{1-\varepsilon}{4z_k^2}\right).
	\end{equation}
	We begin by proving existence and uniqueness.
	
	The proof of existence and uniqueness works by Picard iteration, as   is customary,
	and uses ideas from Foondun and Khoshnevisan \cite{FoondunKhoshnevisan2013} in order to establish the 
	moment bound \eqref{moment:U} and uniqueness. 
	
	Define for all $t>0$ and $x\in\R^d$,
	$U_0(t\,,x) :=1$ and
	\begin{equation}\label{Picard:U_n}
		U_{n+1}(t\,,x) = 1 + \int_{(0,t)\times\R^d}\bm{p}_{s(t-s)/t}\left(
		y - \frac st x \right) U_n(s\,,y)\,\eta(\d s\,\d y),
	\end{equation}
	valid for every $n\in\mathbb{Z}_+$. Define, for all $t>0$, $n\in\mathbb{N}$, and $x\in\R^d$,
	\[
		\mathcal{D}_n(t\,,x) := U_n(t\,,x) - U_{n-1}(t\,,x)
		\quad\text{and}\quad
		\mathcal{E}_n(t) := \sup_{a\in\R^d}\|\mathcal{D}_n(t\,,a)\|_k^2.
	\]
	We first observe that, because of the semigroup property of the heat kernel,
	\begin{align*}
		\|\mathcal{D}_1(t\,,x)\|_k^2 & = \left\| \int_{(0,t)\times\R^d}\bm{p}_{s(t-s)/t}\left(
			y - \frac st x \right)\eta(\d s\,\d y) \right\|_k^2\\
		&\le z_k^2 \int_0^t\d s\int_{\R^d}\d y
			\int_{\R^d}f(\d y')\
			\bm{p}_{s(t-s)/t}(y)\bm{p}_{s(t-s)/t}(y'+y)\\
		&=z_k^2 \int_0^t\d s\int_{\R^d}f(\d w)\
			\bm{p}_{2s(t-s)/t}(w).
	\end{align*}
	Therefore, we may appeal Lemma \ref{lem:p*f} to find that for every $\beta>0$,
	\begin{equation}\label{D_1}
		\|\mathcal{D}_1(t\,,x)\|_k^2
		\le z_k^2\e^{\beta t}
		\int_0^t\e^{-\beta \{s\wedge(t-s)\}}\d s\int_{\R^d}f(\d w)\
		\bm{p}_{2s(t-s)/t}(w)\\
		\le 4z_k^2\e^{\beta t}\Upsilon(2\beta).
	\end{equation}
	Next, we might observe that
	\begin{align*}
		&\left\| \mathcal{D}_{n+1} (t\,,x) \right\|_k^2\\
		&= \left\| \int_{(0,t)\times\R^d}
				\bm{p}_{s(t-s)/t}\left(y - \frac st x \right) 
				\mathcal{D}_n(s\,,y)\,\eta(\d s\,\d y)\right\|_k^2\\
		&\le z_k^2\int_0^t\d s\int_{\R^d}\d y
			\int_{\R^d} f(\d w)\
			\bm{p}_{s(t-s)/t}\left(y - \frac st x \right) 
			\bm{p}_{s(t-s)/t}\left(w+y - \frac st x \right) 
			\left\| \mathcal{D}_n(s\,,y)\mathcal{D}_n(s\,,w+y)
			\right\|_{k/2}\\
		&\le z_k^2 \int_0^t[\mathcal{E}_n(s)]^2\,\d s\int_{\R^d}\d y
			\int_{\R^d} f(\d w)\
			\bm{p}_{s(t-s)/t}\left(y - \frac st x \right) 
			\bm{p}_{s(t-s)/t}\left(w+y - \frac st x \right)\\
		&= z_k^2\int_0^t\mathcal{E}_n(s)\,\d s
			\int_{\R^d} f(\d w)\
			\bm{p}_{2s(t-s)/t}(w).
	\end{align*}
	Since the right-hand side does not depend on $x$, we may optimize to find that
	\begin{align*}
		\e^{-\beta t}\mathcal{E}_{n+1}(t) &\le z_k^2\e^{-\beta t}\int_0^t\mathcal{E}_n(s)\,\d s
			\int_{\R^d}f(\d w)\, \bm{p}_{2s(t-s)/t}(w)\\
		&= z_k^2\int_0^t \e^{-\beta\{s\vee(t-s)\}}\mathcal{E}_n(s) \e^{-\beta\{s\wedge(t-s)\}}\,\d s
			\int_{\R^d}f(\d w)\, \bm{p}_{2s(t-s)/t}(w)\\
		&\le z_k^2\int_0^t \e^{-\beta s}\mathcal{E}_n(s) \e^{-\beta\{s\wedge(t-s)\}}\,\d s
			\int_{\R^d}f(\d w)\, \bm{p}_{2s(t-s)/t}(w).
	\end{align*}
	In particular, set
	\[
		\mathcal{F}_n(t\,,\beta) := \sup_{s\in(0,t]} \left[\e^{-\beta s}\mathcal{E}_n(s)\right]
		\quad\text{for all $n\in\mathbb{N}$ and $t,\beta>0$},
	\]
	in order to deduce from Lemma \ref{lem:p*f} that
	$\mathcal{F}_{n+1}(t\,,\beta) \le
	4z_k^2\Upsilon(2\beta)\mathcal{F}_n(t\,,\beta)$. 
	Plug in $\beta=\beta_{\varepsilon,k}$, defined in \eqref{beta:eps:k}, to find inductively that
	\[
		\mathcal{F}_{n+1}(t\,,\beta_{\varepsilon,k}) \le 
		(1-\varepsilon)\mathcal{F}_n(t\,,\beta_{\varepsilon,k})
		\le\cdots\le(1-\varepsilon)^n\mathcal{F}_1(t\,,\beta_{\varepsilon,k}).
	\]
	Now, we can read off from \eqref{D_1} that
	$\mathcal{F}_1(t\,,\beta_{\varepsilon,k}) =
	\sup_{s\in(0,t]}[\exp\{-\beta_{\varepsilon,k}s\}\mathcal{E}_1(s)]
	\le 4z_k^2\Upsilon(2\beta_{\varepsilon,k}) = 1-\varepsilon.$
	This yields $\mathcal{F}_{n+1}(t\,,\beta_{\varepsilon,k})\le(1-\varepsilon)^{n+1}$, and hence
	\[
		\sup_{x\in\R^d}\left\| U_{n+1}(t\,,x) - U_n(t\,,x) \right\|_k^2\le 
		(1-\varepsilon)^{n+1}\e^{\beta_{\varepsilon,k}t}
		\qquad\text{for all $t>0$ and $n\in\mathbb{Z}_+$}.
	\]
	At this point, standard arguments imply that $U(t\,,x) := \lim_{n\to\infty}U_n(t\,,x)$
	exists in $L^k(\Omega)$ for every $k\ge2$ and solves \eqref{PAM}. Moreover,
	\[
		\|U(t\,,x)\|_k \le \|U_0(t\,,x)\|_k+\sum_{n=0}^\infty
		\| U_{n+1}(t\,,x) - U_n(t\,,x)\|_k\le 1 + \e^{\beta_{\varepsilon,k}t/2}
		\sum_{m=1}^\infty(1-\varepsilon)^{m/2}\le
		\frac{\exp\left(\beta_{\varepsilon,k}t/2\right)}{1-\sqrt{1-\varepsilon}}.
	\]
	Since $1-\sqrt{1-\varepsilon}\ge\varepsilon/2$, this proves
	\eqref{moment:U}. 
\end{proof}

\begin{proof}[Proof of Theorem \ref{th:U} (part 2): Stationarity.]
	For every $\varepsilon>0$ we define a new Gaussian noise
	$\eta^\varepsilon$ via its Wiener integrals,
	\[
		\int_{(0,t)\times\R^d} \varphi(y)\,\eta^\varepsilon(\d s\,\d y)
		:= \int_{(0,t)\times\R^d} \left( \varphi*\bm{p}_\varepsilon\right)(y)\,\eta(\d s\,\d y)
		\qquad\text{for all $t>0$ and $\varphi\in\HH_0$}.
	\]
	Because of the semigroup property of the heat kernel, 
	$\eta^\varepsilon$ is a generalized Gaussian random field with 
	\[
		\Cov\left[ \eta^\varepsilon(t\,,x)~,~\eta^\varepsilon(s\,,y) \right]
		=\delta_0(t-s)  f_\varepsilon(x-y),
		\quad\text{where } f_{\varepsilon} := \bm{p}_{2\varepsilon}*f.
	\]
	As is customary in distribution theory, the rapidly-decreasing test function
	$f_\varepsilon$ is identified with 
	a positive-definite tempered measure [also denoted by $f_\varepsilon$] that,
	among many other things, satisfies
	\eqref{f:finite}. In fact, the total mass of the measure $f_\varepsilon$
	is merely the total integral of the function $f_\varepsilon$, which is $f(\R^d)$.
	Let $\Upsilon_\varepsilon$ be defined as in \eqref{Upsilon}, but with $f$ replaced by $f_\varepsilon$,
	in order to see immediately that $\Upsilon_\varepsilon \le \Upsilon$ pointwise.
	Thus, the already-proved portion of Theorem \ref{th:U} applies to show that the stochastic PDE
	\begin{align*}
		&\partial_t u^\varepsilon = \tfrac12\Delta u^\varepsilon + u^\varepsilon\eta^\varepsilon
			\qquad\text{on $(0\,,\infty)\times\R^d$},\\
		&\text{subject to }u^\varepsilon(0)=\delta_0\hskip.55cm\text{on $\R^d$},
	\end{align*}
	has a predictable random-field solution $u^\varepsilon$ that is unique subject to  
	\[
		\sup_{(t,x,\varepsilon)\in(0,T)\times\R^d\times(0,\infty)}
		\|u^\varepsilon(t\,,x)/\bm{p}_t(x)\|_k<\infty\qquad\text{for every $T>0$ and $k\ge2$}.
	\]
	Let us expand on this a little
	as follows: For every $z\in\R^d$, consider the SPDE
	\[\left[\begin{split}
		&\partial_t u^\varepsilon(t\,,x;\,z) = \tfrac12\Delta_x u^\varepsilon(t\,,x\,;z) + 
			u^\varepsilon(t\,,x\,;z)\eta^\varepsilon(t\,,x)
			\qquad\text{on $(0\,,\infty)\times\R^d$},\\
		&\text{subject to }u^\varepsilon(0\,,\bullet\,;z)=\delta_z(\bullet)\hskip.55cm\text{on $\R^d$}.
	\end{split}\right.\]
	Then we can apply the same argument that was used in the already-proved portion of Theorem
	\ref{th:U} in order to establish the existence of a random-field solution $u^\varepsilon(\bullet\,,\bullet\,;z)$
	to the preceding, one for every $z\in\R^d$, that is unique among all that satisfy
	\[
		L_{T,k} := \sup_{(t,x,z,\varepsilon)\in(0,T)\times\R^d\times\R^d\times(0,\infty)}
		\left\| \frac{u^\varepsilon(t\,,x\,;z)}{\bm{p}_t(x-z)}\right\|_k
		<\infty\qquad\text{for every $T>0$ and $k\ge2$}.
	\]
	We remark that $u^\varepsilon(t\,,x\,;0)=u^\varepsilon(t\,,x)$ for all $t,\varepsilon>0$
	and $x\in\R^d$. 
	
	Let $U^\varepsilon(t\,,x):=u^\varepsilon(t\,,x)/\bm{p}_t(x)$
	and $U^\varepsilon(t\,,x\,;z):=u^\varepsilon(t\,,x\,;z)/\bm{p}_t(x-z)$ for all
	$t>0$, $\varepsilon\in(0\,,1)$, and $x,z\in\R^d$; confer with \eqref{U}.
	The method of Dalang \cite{Dalang1999} can be used to show also that
	$(t\,,x\,,z)\mapsto U^\varepsilon(t\,,x\,;z)$ -- hence also
	$(t\,,x\,,z)\mapsto u^\varepsilon(t\,,x\,;z)$ --
	is continuous in $L^k(\Omega)$ for
	every $k\ge2$ and $\varepsilon>0$. We skip the details and mention only that, 
	in particular, $u^\varepsilon$ and $U^\varepsilon$ both have Lebesgue-measurable versions for
	every $\varepsilon>0$, which we always use.
	
	Choose and fix an arbitrary non-random function $v_0\in L^\infty(\R^d)$ to see 
	from linearity that
	\begin{equation}\label{v_0}
		v^\varepsilon(t\,,x) := \int_{\R^d} u^\varepsilon(t\,,x\,;z)v_0(z)\,\d z
		\qquad[t>0,\, x\in\R^d]
	\end{equation}
	is the unique predictable solution to the SPDE
	\[\left[\begin{split}
		&\partial_t v^\varepsilon(t\,,x)= \tfrac12\Delta v^\varepsilon(t\,,x) + 
			v^\varepsilon(t\,,x)\eta^\varepsilon(t\,,x)
			\qquad\text{for $(t\,,x)\in(0\,,\infty)\times\R^d$},\\
		&\text{subject to }v^\varepsilon(0)=v_0\hskip.54cm\text{on $\R^d$},
	\end{split}\right.\]
	that satisfies
	\[
		L_{T,k,\varepsilon}' := \sup_{(t,x)\in(0,T)\times\R^d}\| v^\varepsilon(t\,,x)\|_k<\infty
		\qquad\text{for every $T,\varepsilon>0$ and $k\ge2$}.
	\]
	Recall that $v^\varepsilon$ has the following mild formulation:
	\begin{align*}
		v^\varepsilon(t\,,x) &= (\bm{p}_t*v_0)(x) + \int_{(0,t)\times\R^d}
			\bm{p}_{t-s}(y-x) v^\varepsilon(s\,,y)\,\eta^\varepsilon(\d s\,\d y)\\
		&=(\bm{p}_t*v_0)(x)  + \int_{(0,t)\times\R^d}\left(\int_{\R^d}
			\bm{p}_{t-s}(y-x) v^\varepsilon(s\,,y)\bm{p}_\varepsilon(y-z)\,\d y\right) \eta(\d s\,\d z),
	\end{align*}
	thanks to a stochastic Fubini argument, which we skip.
	The spatial correlation function $f_\varepsilon$
	of $\eta^\varepsilon$ clearly is in $\mathscr{S}(\R^d)$ and hence is bounded; in fact, 
	\[
		f_\varepsilon(x) = \left(\bm{p}_{2\varepsilon}*f\right)(x)\le
		\frac{1}{(2\pi)^d}\int_{\R^d}\e^{-\varepsilon\|y\|^2}\hat{f}(\d y)<\infty
		\qquad\text{for all $\varepsilon\in(0\,,1)$ and $x\in\R^d$; see \eqref{Fourier}.}
	\]
	Let $B$ denote a standard Brownian motion that is independent of $\eta$,
	and let $\E_B$ and $\E_\eta$ denote, respectively, the conditional
	expectation operators given $B$ and $\eta$. According to 
	general theory (see Hu and Nualart \cite[Proposition 5.2]{HuNualart09}),
	$v^\varepsilon$ has a Feynman--Kac representation
	\[
		v^\varepsilon(t\,,x) = \E_B\left[ v_0(B_t+x)\exp\left( \int_{(0,t)\times\R^d}
		\bm{p}_\varepsilon\left( y - x - B_{t-s}\right) \eta(\d s\,\d y) - \tfrac12 tf_\varepsilon(0)\right)
		\right].
	\]
	Define 
	\[
		B^{t,w}_s := B_s - \frac{s}{t}(B_t-w)\qquad\text{for all $s\in[0\,,t]$ and $t>0$ and $w\in\R^d$}.
	\]
	We can see that $B^{t,w}$ is a Brownian bridge on $[0\,,t]$, conditioned to go from
	the space-time point $(0\,,0)$ to the space-time point $(t\,,w)$. And in fact,
	\[
		\bm{p}_{s(t-s)/t}\left( y-\frac st x\right)
		\ \text{of \eqref{mild:U} is the probability density of $B^{t,x}_s$ at $y$.}
	\]
	Because
	$\{B^{t,w}_s\}_{s\in[0,t]}$ is independent of $B_t$, we may disintegrate and write
	\[
		v^\varepsilon(t\,,x) 
		=\int_{\R^d} \bm{p}_t(z-x)
		\E_B\left[  \exp\left( \int_{(0,t)\times\R^d}
		\bm{p}_\varepsilon\left( y - x - B_{t-s}^{t,z}\right) 
		\eta(\d s\,\d y) - \tfrac12 tf_\varepsilon(0)\right)
		\right]v_0(z)\,\d z.
	\]
	We compare the above to \eqref{v_0} in order to deduce from the fact that
	$v_0\in L^\infty(\R^d)$ is arbitrary
	that the following is a version of $u^\varepsilon(t\,,x\,;z)$:
	\[
		u^\varepsilon(t\,,x\,;z) = \bm{p}_t(z-x)
		\E_B \left[  \exp\left( \int_{(0,t)\times\R^d}
		\bm{p}_\varepsilon\left( y - x - B_{t-s}^{t,z}\right) 
		\eta(\d s\,\d y) - \tfrac12 tf_\varepsilon(0)\right)
		\right].
	\]
	We adopt this version of $u^\varepsilon(t\,,x\,;z)$ [rather than the old ones].
	Set $z=0$ to see that we have adopted the following versions of $u^\varepsilon(t\,,x)$
	and $U^\varepsilon(t\,,x)$:
	\begin{align*}
		u^\varepsilon(t\,,x) &= \bm{p}_t(x)
			\E_B\left[  \exp\left( \int_{(0,t)\times\R^d}
			\bm{p}_\varepsilon\left( y - x - B_{t-s}^{t,0}\right) 
			\eta(\d s\,\d y) - \tfrac12 tf_\varepsilon(0)\right)
			\right],\quad\text{and hence}\\
		U^\varepsilon(t\,,x) &= \E_B\left[  \exp\left( \int_{(0,t)\times\R^d}
			\bm{p}_\varepsilon\left( y - x - B_{t-s}^{t,0}\right) 
			\eta(\d s\,\d y) - \tfrac12 tf_\varepsilon(0)\right)
			\right].
	\end{align*}
	According to the Malliavin-calculus 
	method of Hu and Nualart \cite{HuNualart09} (see also  \cite[Theorem 1.9(2)]{CH19Comparison}), 
	$\lim_{\varepsilon\to0} U^\varepsilon(t\,,x)
	=U(t\,,x)$ in $L^2(\Omega)$ for every $t>0$ and $x\in\R^d$. Therefore, our goal of
	proving the stationarity of $U(t)$ would follow once we demonstrate the stationarity of
	$U^\varepsilon(t)$ for every $t,\varepsilon>0$.
	But that is not hard to do. Indeed, by the It\^o-Walsh isometry for stochastic integrals,
	\begin{align*}
		&\E_{\eta}\left[\int_{(0,t)\times\R^d}
			\bm{p}_\varepsilon\left( y-a - B_{t-s}^{t,0}\right) 
			\eta(\d s\,\d y)\times
			\int_{(0,t)\times\R^d}
			\bm{p}_\varepsilon\left(y-b - B_{t-s}^{t,0}\right) 
			\eta(\d s\,\d y)\right]\\
		&=\int_0^t\d s\int_{\R^d}\d y\int_{\R^d}f(\d z)\
			\bm{p}_\varepsilon\left( y-a - B_{t-s}^{t,0}\right) 
			\bm{p}_\varepsilon\left( z+y-b - B_{t-s}^{t,0}\right)\\
		&= t\int_{\R^d}f(\d z)\
			\bm{p}_{2\varepsilon}\left( z - b +a  \right)
			\qquad\text{[semigroup property]}\\
		&=t \left(\bm{p}_{2\varepsilon}*f\right)\left( b-a\right),
	\end{align*}
	which proves the asserted stationarity of $U^\varepsilon(t)$
	for every $t,\varepsilon>0$. 
\end{proof}

\begin{remark}\label{positivity}
As a consequence of Feynman-Kac formula, we can
see immediately that $U(t\,, x)\geq 0$ a.s.\ for all $t>0$ and $x\in \R^d$.
\end{remark}

\begin{proof}[Proof of Theorem \ref{th:U} (part 3): Behavior near $t=0$.]
	
	We now complete the proof by showing that $\lim_{t\to0}U(t\,,x)= 1$ in $L^k(\Omega)$
	for every $x\in\R^d$. By stationarity, it suffices to consider only the
	case that $x=0$.
	Now in accord with \eqref{mild:U} and \eqref{moment:U}, there exists a real number
	$K$ such that, uniformly for all $t\in(0\,,1)$,
	\begin{align*}
		\E\left(|U(t\,,0) - 1|^k\right) &\le \E\int_0^t\d s\int_{\R^d}\d y\int_{\R^d} f(\d w)\
			\bm{p}_{s(t-s)/t}(y)\bm{p}_{s(t-s)/t}(w+y)
			\|U(s\,,y)U(s\,,w+y)\|_{k/2}\\
		&\le K\int_0^t\d s\int_{\R^d}\d y\int_{\R^d} f(\d w)\
			\bm{p}_{s(t-s)/t}(y)\bm{p}_{s(t-s)/t}(w+y)\\
		&=K\int_0^t\left( \bm{p}_{2s(t-s)/t}*f\right)(0)\,\d s \le K\e^{\beta t}
			\int_0^t\e^{-\beta\{s\wedge(t-s)\}}\left( \bm{p}_{2s(t-s)/t}*f\right)(0)\,\d s\\
		&\le 4K\e^{\beta t}\Upsilon(2\beta)
			\qquad\text{for all $\beta>0$}.
	\end{align*}
	Set $\beta=1/t$ to find that
	$\E(|U(t\,,0) - 1|^k) \le 4K\e\Upsilon(2/t)\to 0$ as $t\to 0$,
	owing to the dominated convergence theorem, \eqref{Upsilon}, 
	and the theorem's condition that $\Upsilon(\beta)<\infty$ for one, hence all,
	$\beta>0$. This concludes the proof.
\end{proof}

\section{Ergodicity: proof of Theorem \ref{th:ergodic}}\label{ergodicity}

The following bound on the Malliavin derivative of $U(t\,,x)$ is a key technical result of the paper. Among other things,
it also plays a central role in our proof of Theorem \ref{th:ergodic}.

\begin{proposition}\label{pr:Du}
	Choose and fix $k\ge2$, $t>0$, and $x\in\R^d$. Then, $U(t\,,x)\in\cap_{k\ge2}\mathbb{D}^{1,k}$,
	and for almost every
	$(s\,,y)\in(0\,,t)\times\R^d$,
	\begin{align}
		\|D_{s,y}U(t\,,x) \|_k &\le \frac{64}{7}
		\exp\left\{ \frac t2\left[\beta_{7/8,k}+\frac{1}{2}\Upsilon^{-1}\left(\frac{1}{32z_k^2}\right)
		\right]\right\}\bm{p}_{s(t-s)/t}\left( y- \frac st x\right)\nonumber \\
		&:=C_{t, k} \, \bm{p}_{s(t-s)/t}\left( y- \frac st x\right) \label{C_{t, k}},
	\end{align}
	where $\beta_{\varepsilon,k}$ was defined in \eqref{beta:eps:k}.
\end{proposition}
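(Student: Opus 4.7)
The plan is to emulate, at the level of the Malliavin derivative, the Picard/Gronwall machinery used in the proof of \eqref{moment:U}. Writing $K(r,z;\tau,\xi):=\bm{p}_{r(\tau-r)/\tau}(z-\tfrac{r}{\tau}\xi)$ for the Brownian-bridge transition density appearing in \eqref{mild:U}, and applying $D_{s_0,y_0}$ to the Picard recursion \eqref{Picard:U_n} together with the commutation of $D$ with stochastic integration, one obtains
\[
D_{s_0,y_0}U_{n+1}(t,x) = K(s_0,y_0;t,x)\,U_n(s_0,y_0)+\int_{(s_0,t)\times\R^d}K(s,y;t,x)\,D_{s_0,y_0}U_n(s,y)\,\eta(\d s\,\d y),
\]
and induction in $n$ shows $U_n(t,x)\in\mathbb{D}^{1,k}$ for all $k\ge2$. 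Taking the $L^k(\Omega)$-norm, applying BDG, and then using $\|XY\|_{k/2}\le\|X\|_k\|Y\|_k$ produces a recursion for $G_n(s_0,y_0;t,x):=\|D_{s_0,y_0}U_n(t,x)\|_k$ whose deterministic part is proportional to $K(s_0,y_0;t,x)$.

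The natural ansatz is therefore $G_n(s_0,y_0;t,x)\le\Phi_n(s_0,t)\,K(s_0,y_0;t,x)$, with $\Phi_n(s_0,t)$ uniform in $(y_0,x)$. The crucial step is that this ratio form is preserved by the recursion. This rests on the Chapman--Kolmogorov identity for bridge densities, obtained by iterating \eqref{PPPP}:
\[
K(s_0,y_0;s,y)\,K(s,y;t,x)=K(s_0,y_0;t,x)\cdot\bm{p}_{(s-s_0)(t-s)/(t-s_0)}\!\left(y-\tfrac{t-s}{t-s_0}y_0-\tfrac{s-s_0}{t-s_0}x\right).
\]
Applying this identity to both of the pairs $(y)$ and $(w+y)$ appearing in the BDG bound and integrating the resulting Gaussian factors in $y$ gives
\[
\int_{\R^d}K(s_0,y_0;s,y)K(s,y;t,x)K(s_0,y_0;s,w+y)K(s,w+y;t,x)\,\d y=K(s_0,y_0;t,x)^2\,\bm{p}_{2(s-s_0)(t-s)/(t-s_0)}(w).
\]
The $\d y$ integration thus collapses to an expression of exactly the shape handled by Lemma~\ref{lem:p*f}, and the BDG recursion reduces to
\[
\Phi_{n+1}(s_0,t)\le\|U(s_0,y_0)\|_k+z_k\!\left[\int_{s_0}^t\Phi_n(s_0,s)^2\,(\bm{p}_{2(s-s_0)(t-s)/(t-s_0)}*f)(0)\,\d s\right]^{1/2}.
\]

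To close the recursion I follow the weighted-norm trick from the proof of \eqref{moment:U}: substitute $s\mapsto s-s_0$, pass to $\mathrm{e}^{-\beta(t-s_0)}\Phi_n(s_0,t)^2$, and apply Lemma~\ref{lem:p*f}. Choosing $\beta=\beta_{7/8,k}$ so that $\Upsilon(2\beta)=1/(32z_k^2)$, one has $8z_k^2\Upsilon(2\beta)=1/4$, which turns the recursion into a contraction of ratio $1/4$. Combining this contraction with the moment bound $\|U(s_0,y_0)\|_k\le(16/7)\mathrm{e}^{s_0\beta_{7/8,k}/2}$ inherited from \eqref{moment:U} at $\varepsilon=7/8$, and using $s_0\le t$, yields $\Phi_n(s_0,t)\le C_{t,k}$ uniformly in $n$ and $s_0\in(0,t)$, with $C_{t,k}$ of the form stated in \eqref{C_{t, k}}. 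Finally, the uniform $\mathbb{D}^{1,k}$-boundedness of $\{U_n(t,x)\}$ together with the $L^k(\Omega)$-convergence $U_n(t,x)\to U(t,x)$ from Theorem~\ref{th:U} yields $U(t,x)\in\mathbb{D}^{1,k}$ and the asserted pointwise bound on $D_{s_0,y_0}U(t,x)$ by the standard closability of $D$.

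The main technical obstacle is the bridge Chapman--Kolmogorov identity: without it, the $\d y$ integration would leave behind a mixed kernel and the recursion would not reproduce the shape $K(s_0,y_0;t,x)$ on the right. Everything else---verifying the commutation for $D_{s_0,y_0}$, applying BDG, and matching the numerical constant $64/7$ in $C_{t,k}$---is routine arithmetic around Lemma~\ref{lem:p*f}, and follows the template already laid out in part~1 of Theorem~\ref{th:U}.
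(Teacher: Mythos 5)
Your proposal takes essentially the same route as the paper: a Picard recursion for the Malliavin derivative, BDG, the kernel product-to-ratio identity (your ``bridge Chapman--Kolmogorov'' is exactly an iterated form of \eqref{PPPP}, which is also how the paper applies it at the level of $u_n=\bm{p}_t(x)U_n(t,x)$), Lemma~\ref{lem:p*f}, the weighted exponential contraction, and closability of $D$ to pass to the limit. One small arithmetic slip: with $\beta=\beta_{7/8,k}$ you have $8z_k^2\Upsilon(2\beta)=1/4$, so the contraction acting on $\Phi_n$ has ratio $\sqrt{1/4}=1/2$ (not $1/4$); this is exactly what produces $\alpha_n\le4\alpha_1$ and hence the stated prefactor $\tfrac{64}{7}$, so your claimed final constant is consistent with the corrected ratio.
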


Because of \eqref{U},
$U(t\,,x)\in\cap_{k\ge2}\mathbb{D}^{1,k}$ iff $u(t\,,x)\in\cap_{k\ge2}\mathbb{D}^{1,k}$.
Thus, portions of the above are already included in the work of Chen et al \cite[Proposition 5.1]{CHN}. The point
here is mainly the explicit bound for the moments of the Malliavin derivative of $U(t\,,x)$.

\begin{remark}
	Properties of the Malliavin derivative, and \eqref{PPPP}, together imply that the inequality of
	Proposition \ref{pr:Du} is equivalent to the following:
	\begin{align*}
		\|D_{s,y}u(t\,,x) \|_k &\le \frac{64}{7}
			\exp\left\{ \frac t2\left[\beta_{7/8,k}+\frac{1}{2}\Upsilon^{-1}\left(\frac{1}{32z_k^2}\right)
			\right]\right\}\bm{p}_t(x)\bm{p}_{s(t-s)/t}\left( y- \frac st x\right)\\
		&=\frac{64}{7}
			\exp\left\{ \frac t2\left[\beta_{7/8,k}+\frac{1}{2}\Upsilon^{-1}\left(\frac{1}{32z_k^2}\right)
			\right]\right\}\bm{p}_{t-s}(x-y)\bm{p}_s(y).
	\end{align*}
\end{remark}

The proof of Proposition \ref{pr:Du} requires some notation and two intervening lemmas.

Define $u_0(t\,,x) = \bm{p}_t(x)$, and iteratively let
\begin{equation}\label{mild:n}
	u_{n+1}(t\, , x) = \bm{p}_{t}(x) + \int_{(0,t)\times\R^d}
	\bm{p}_{t - r}(x - z)u_n(r\,, z)\,\eta(\d r\,\d z)
\end{equation}
for every $n\in\mathbb{Z}_+$. It is easy to see that, for every $n\ge 2$,
$u_n(t\,,x) = \bm{p}_t(x)U_n(t\,,x)$, where $U_n$ was defined in the proof
of Theorem \ref{th:U}, and denotes the $n$th stage in the Picard iteration approximation
of $U$. It follows from the proof of
Theorem \ref{th:U} that $u_n(t\,,x)$ converges to 
$u(t\,,x)=\bm{p}_t(x)U(t\,,x)$ in
$L^k(\Omega)$ as $n\to\infty$ for every $k\ge2$. It also follows from basic properties of the Malliavin derivative that a.s.,
\begin{equation} \label{BB2}
	D_{s,y}u_{n+1}(t\,,x) = \bm{p}_{t-s}(x-y)u_n(s\,,y)
	+ \int_{(s,t)\times\R^d}
	\bm{p}_{t - r}(x - z) D_{s,y}u_n(r\,, z)\,\eta(\d r\,\d z),
\end{equation}
for almost every $(s\,,y)\in(0\,,t)\times\R^d$, and all $n\in\mathbb{Z}_+$
for which the right-hand side is well defined. The following shows inductively that indeed
the right-hand side is well defined for every $n$, and provides a bound on its $L^k(\Omega)$-norms.
	
\begin{lemma}\label{lem:Du_n}
	Choose and fix $n\in\mathbb{N}$, $k\ge2$, $t>0$, and $x\in\R^d$,
	and let $\beta:=\beta_{7/8,k}$, as defined in \eqref{beta:eps:k}.
	Then,
	\begin{equation}\label{eq:induct}
		\|D_{s,y}u_n(t\,,x)\|_k\le\alpha_n \e^{\beta (t-s)/2}\bm{p}_{t-s}(x-y)\bm{p}_s(y),
	\end{equation}
	for almost every $(s\,,y)\in(0\,,t)\times\R^d$, where
	\[
		\alpha_1 := \adjustlimits\sup_{m\in\mathbb{Z}_+}\sup_{x\in\R^d}
		\sup_{s\in(0,t]}
		\|U_m(s\,,x)\|_k<\infty \quad\text{and}\quad
		\alpha_n := \left( \sqrt{8}\left[1-2^{-n}\right]  + 2^{-n}\right)\alpha_1
		\le4\alpha_1,
	\]
	for the random fields $\{U_n\}_{n=0}^\infty$ defined in the proof of Theorem \ref{th:U}.
\end{lemma}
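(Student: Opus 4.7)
The plan is induction on $n$. First, $\alpha_1$ is finite: it is exactly the uniform-in-$(m,x,s)$ moment bound \eqref{moment:U} from Theorem \ref{th:U}, and in particular $\alpha_1 \geq \|U_0\|_k = 1$. For the base case $n=1$, equation \eqref{BB2} at $n=0$ has a deterministic integrand (so the stochastic integral vanishes) and yields $D_{s,y}u_1(t,x) = \bm{p}_{t-s}(x-y)\bm{p}_s(y)$, which trivially satisfies the claimed bound since $\alpha_1 \geq 1$ and $e^{\beta(t-s)/2}\geq 1$.

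For the inductive step, I would take $L^k(\Omega)$-norms in \eqref{BB2} and apply the triangle inequality to separate the deterministic contribution $\bm{p}_{t-s}(x-y)\|u_n(s,y)\|_k \leq \alpha_1 \bm{p}_{t-s}(x-y)\bm{p}_s(y)$ from the stochastic integral. For the latter I would apply BDG (constant $z_k$), then Cauchy--Schwarz on the resulting $L^{k/2}$-norm of the product of Malliavin derivatives at $(r,z)$ and $(r,z+w)$, and finally insert the induction hypothesis. Up to a prefactor $\alpha_n^2 z_k^2 \bm{p}_s(y)^2$, this reduces matters to controlling the deterministic integral
\[
	\int_s^t \d r\, e^{\beta(r-s)}\int_{\R^d}\d z\int_{\R^d} f(\d w)\,
	\bm{p}_{t-r}(x-z)\bm{p}_{t-r}(x-z-w)\bm{p}_{r-s}(z-y)\bm{p}_{r-s}(z+w-y).
\]

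The inner $z$-integral is the crux: applying the Brownian-bridge identity \eqref{PPPP} twice factors the pairs $\bm{p}_{t-r}(x-z)\bm{p}_{r-s}(z-y)$ and $\bm{p}_{t-r}(x-z-w)\bm{p}_{r-s}(z+w-y)$ each as $\bm{p}_{t-s}(x-y)$ times a Gaussian density in $z$ of common variance $\sigma := (r-s)(t-r)/(t-s)$, with centers differing by exactly $w$. Integrating in $z$ then collapses the product to $\bm{p}_{t-s}(x-y)^2 \bm{p}_{2\sigma}(w)$ by the semigroup property, and integrating against $f(\d w)$ produces $\bm{p}_{t-s}(x-y)^2 (\bm{p}_{2\sigma}*f)(0)$. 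The remaining time integral $\int_0^{t-s} e^{\beta\tau}(\bm{p}_{2\tau(t-s-\tau)/(t-s)}*f)(0)\,\d\tau$ is put into the form of Lemma \ref{lem:p*f} via the elementary inequality $(t-s)-\tau \geq \tau\wedge ((t-s)-\tau)$, which rewrites $e^{\beta\tau}$ as $e^{\beta(t-s)}e^{-\beta((t-s)-\tau)}$; the lemma then bounds it by $4 e^{\beta(t-s)}\Upsilon(2\beta)$. The choice $\beta=\beta_{7/8,k}$ forces $\Upsilon(2\beta) = 1/(32 z_k^2)$, producing the factor $1/8$ inside the squared norm and hence the coefficient $1/\sqrt{8}$ after extracting the square root.

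Combining the two pieces yields the recursion $\|D_{s,y}u_{n+1}(t,x)\|_k \leq (\alpha_1 + \alpha_n/\sqrt{8}) e^{\beta(t-s)/2}\bm{p}_{t-s}(x-y)\bm{p}_s(y)$. A short direct verification shows that the explicit sequence $\alpha_n = (\sqrt{8}[1-2^{-n}] + 2^{-n})\alpha_1$ dominates every iterate of this recursion (and is trivially bounded by $4\alpha_1$), closing the induction. The main obstacle is the spatial integral identity: the two Brownian-bridge factorizations must be arranged so that the residual Gaussians in $z$ share the variance $\sigma$ and are pure translates by $w$, so that a single Gaussian convolution yields a density in $w$ of the form $\bm{p}_{2\sigma}$ whose convolution with $f$ at the origin exactly matches the integrand of Lemma \ref{lem:p*f}. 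Everything else is bookkeeping of BDG constants and exponential weights.
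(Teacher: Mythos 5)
Your proof is correct and follows essentially the same route as the paper: induction starting from \eqref{BB2}, BDG together with Cauchy--Schwarz, the double application of \eqref{PPPP} and the semigroup property to collapse the spatial $z$-integral to $\bm{p}_{t-s}(x-y)^2(\bm{p}_{2r(t-s-r)/(t-s)}*f)(0)$, and the same exponential-weight identity $r\vee(t-s-r)=(t-s)-r\wedge(t-s-r)$ to invoke Lemma~\ref{lem:p*f}, with $\Upsilon(2\beta_{7/8,k})=1/(32z_k^2)$ producing the contraction constant. The only cosmetic difference is that you apply the triangle inequality directly to $\|D_{s,y}u_{n+1}(t,x)\|_k$ rather than squaring first via $(a+b)^2\le 2a^2+2b^2$ as the paper does, which gives the slightly sharper recursion $\alpha_{n+1}=\alpha_1+\alpha_n/\sqrt8$ instead of the paper's $\alpha_{n+1}=\sqrt2\,\alpha_1+\tfrac12\alpha_n$; either recursion is dominated by the stated $\alpha_n=(\sqrt8[1-2^{-n}]+2^{-n})\alpha_1$, so the conclusion is the same.
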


The fact that $\alpha_1$ is finite is a consequence of the proof of 
\eqref{moment:U}. In fact, the proof of Theorem \ref{th:U} [with $\varepsilon=7/8$]
shows that
\begin{equation}\label{alpha_1}
	\alpha_1 \le \frac{16}{7}
	\exp\left\{ \frac{t}{4}\Upsilon^{-1}\left(\frac{1}{32z_k^2}\right)\right\}.
\end{equation}

\begin{proof}[Proof of Lemma \ref{lem:Du_n}]
	We proceed to prove \eqref{eq:induct} by using induction on $n$.
	
	Because $D_{s,y}u_0(t\,,x)=0$, it follows from \eqref{BB2} that
	$\|D_{s,y}u_1(t\,,x)\|_k\le\alpha_1\bm{p}_{t-s}(x-y)\bm{p}_s(y)$.
	In particular, \eqref{eq:induct} holds for $n=1$.
	Next, we suppose \eqref{eq:induct} is true for some integer $n\ge1$ and proceed
	to prove that it is true when $n$ is replaced by $n+1$.
	With this aim in mind, observe using the BDG inequality that
	\[
		\mathcal{E}_{n+1} = \mathcal{E}_{n+1}(s\,,y\,,t\,,x\,,k) :=
		\| D_{s,y}u_{n+1}(t\,,x)\|_k^2
	\]
	satisfies
	\begin{align*}
		\mathcal{E}_{n+1}
			&\le 2\alpha_1^2\left[\bm{p}_{t-s}(x-y)\bm{p}_s(y)\right]^2
			 + 2z_k^2 \int_s^t\d r\int_{\R^d}\d z\int_{\R^d}f(\d z')\\
		& \qquad \qquad    \times \bm{p}_{t-r}(x - z) \bm{p}_{t-r}(x - z - z')
			\left\| D_{s,y}u_n(r\,, z)\right\|_k \left\| D_{s,y}u_n(r\,, z'+z)\right\|_k\\
		&\le 2\alpha_1^2\left[\bm{p}_{t-s}(x-y)\bm{p}_s(y)\right]^2
			+ 2z_k^2\alpha_n^2\left[\bm{p}_s(y)\right]^2 
			\int_0^{t-s}\e^{\beta r}
			\,\d r\int_{\R^d}\d z\int_{\R^d}f(\d z')\\
		& \qquad \qquad \times \bm{p}_{t-s-r}(x - z) \bm{p}_r(z-y)\bm{p}_{t-s-r}(x - z - z')
			\bm{p}_r(z'+z-y),
	\end{align*}
	thanks to the induction hypothesis and a change of variables [$r\leftrightarrow r-s$].
	Apply \eqref{PPPP} in order to find that
	\begin{align*}
		\mathcal{E}_{n+1}
			&\le 2\alpha_1^2\left[\bm{p}_{t-s}(x-y)\bm{p}_s(y)\right]^2
			+ 2z_k^2\alpha_n^2\left[\bm{p}_{t-s}(x-y)\bm{p}_s(y)\right]^2
			\int_0^{t-s}\e^{\beta\{r\vee(t-s-r)\}}\,\d r\int_{\R^d}\d z\int_{\R^d}f(\d z')\\
		&\qquad \qquad \times  \bm{p}_{r(t-s-r)/(t-s)}\left( z-y - \frac{r}{t-s}(x-y) \right)
			\bm{p}_{r(t-s-r)/(t-s)}\left( z'+z-y - \frac{r}{t-s}(x-y) \right)\\
		&= \left[\bm{p}_{t-s}(x-y)\bm{p}_s(y)\right]^2\left\{ 2\alpha_1^2
			+ 2z_k^2\alpha_n^2\int_0^{t-s}\e^{\beta\{r\vee(t-s-r)\}}
			\left(\bm{p}_{2r(t-s-r)/(t-s)}*f\right)(0)\,\d r\right\},
	\end{align*}
	where we have appealed to the semigroup property of the heat kernel 
	for the last line.  Take square roots and apply 
	the simple inequality $(|a|+|b|)^{1/2}\le|a|^{1/2}+|b|^{1/2}$ ---
	valid for all $a,b\in\R$ --- to see that
	\[
		\frac{\| D_{s,y}u_{n+1}(t\,,x)\|_k}{\bm{p}_{t-s}(x-y)\bm{p}_s(y)}\le
		\sqrt{2}\alpha_1
		+ \alpha_n\left\{2z_k^2\int_0^{t-s}\e^{\beta\{r\vee(t-s-r)\}}
		\left(\bm{p}_{2r(t-s-r)/(t-s)}*f\right)(0)\,\d r\right\}^{1/2}.
	\]
	Since $r\vee(t-s-r) = t-s - \{r\wedge(t-s-r)\},$ this proves that
	\begin{align*}
		\frac{\| D_{s,y}u_{n+1}(t\,,x)\|_k}{\bm{p}_{t-s}(x-y)\bm{p}_s(y)} &\le
			\sqrt{2}\alpha_1
			+ \alpha_n\e^{\beta(t-s)/2}
			\left\{2z_k^2\int_0^{t-s}\e^{-\beta\{r\wedge(t-s-r)\}}
			\left(\bm{p}_{2r(t-s-r)/(t-s)}*f\right)(0)\,\d r\right\}^{1/2}\\
		&\le\sqrt{2}\alpha_1
			+ \alpha_n\e^{\beta(t-s)/2}\sqrt{8z_k^2\Upsilon(2\beta)}
			\hskip1in\text{[see Lemma \ref{lem:p*f}]}\\
		&\le \sqrt{2}\alpha_1
			+ \tfrac12\alpha_n\e^{\beta(t-s)/2} \le
			\alpha_{n+1}\e^{\beta(t-s)/2},
	\end{align*}
	thanks to the definition \eqref{beta:eps:k} of $\beta=\beta_{7/8,k}$
	and the readily-checkable fact that $\alpha_{n+1}=\sqrt{2}\alpha_1+\frac12\alpha_n$.
	This proves \eqref{eq:induct} with $n$ replaced by $n+1$ and concludes the inductive
	stage of the argument.
\end{proof}

Our next technical lemma implies, inductively, that $u_n(t\,,x)\in\mathbb{D}^{1,2}$ for
every $n\in\mathbb{N}$.

\begin{lemma}\label{lem:E(||D||)}
	There exist real numbers $A,B>0$ such that
	\[
		\adjustlimits
		\sup_{n\in\mathbb{N}}\sup_{x\in\R^d}
		\E\left( \left\| Du_n(t\,,x) \right\|_{\mathcal{H}}^2\right)
		\le A t^{-d} \e^{Bt}
		\qquad\text{for all $t>0$}.
	\]
\end{lemma}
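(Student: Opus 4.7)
The plan is to bound $\E(\|Du_n(t,x)\|_{\mathcal{H}}^2)$ by first expanding the $\mathcal{H}$-norm, then using Lemma \ref{lem:Du_n} to replace the Malliavin derivatives by explicit heat-kernel bounds, and finally applying the identity \eqref{PPPP} together with Lemma \ref{lem:p*f} to handle the remaining integral. Since Lemma \ref{lem:Du_n} already gives an $L^k(\Omega)$-bound on $\|D_{s,y}u_n(t,x)\|_k$ that is uniform in $n$, what remains is really a computation.

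First I would write out, using the definition of $\mathcal{H} = L^2(\R_+ \times \mathcal{H}_0)$ and Cauchy--Schwarz in $L^2(\Omega)$,
\[
\E\bigl(\|Du_n(t,x)\|_{\mathcal{H}}^2\bigr) \le \int_0^t \d s\int_{\R^d}\d y\int_{\R^d}f(\d y')\, \|D_{s,y}u_n(t,x)\|_2 \,\|D_{s,y+y'}u_n(t,x)\|_2.
\]
Applying Lemma \ref{lem:Du_n} with $k=2$ (noting that $\alpha_n \le 4\alpha_1$ uniformly in $n$, and that $\alpha_1$ depends on $t$ only through $\exp(ct)$ thanks to \eqref{alpha_1}), each factor is controlled by $4\alpha_1\, \e^{\beta(t-s)/2}\bm{p}_{t-s}(x-\bullet)\bm{p}_s(\bullet)$. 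This yields
\[
\E\bigl(\|Du_n(t,x)\|_{\mathcal{H}}^2\bigr) \le 16\alpha_1^2 \int_0^t \e^{\beta(t-s)}\d s\int_{\R^d}f(\d y')\int_{\R^d}\d y\, \bm{p}_{t-s}(x-y)\bm{p}_s(y)\bm{p}_{t-s}(x-y-y')\bm{p}_s(y+y').
\]

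Next I would invoke \eqref{PPPP} twice to factor out $[\bm{p}_t(x)]^2$:
\[
\bm{p}_{t-s}(x-y)\bm{p}_s(y)\bm{p}_{t-s}(x-y-y')\bm{p}_s(y+y') = [\bm{p}_t(x)]^2 \bm{p}_{s(t-s)/t}\bigl(y-\tfrac{s}{t}x\bigr)\bm{p}_{s(t-s)/t}\bigl(y+y'-\tfrac{s}{t}x\bigr),
\]
and then the semigroup property of the heat kernel collapses the $\d y$-integral into $\bm{p}_{2s(t-s)/t}(y')$. Using $[\bm{p}_t(x)]^2 \le (2\pi t)^{-d}$ uniformly in $x$, this leads to
\[
\sup_{x\in\R^d}\E\bigl(\|Du_n(t,x)\|_{\mathcal{H}}^2\bigr) \le 16\alpha_1^2 (2\pi t)^{-d}\int_0^t \e^{\beta(t-s)}\bigl(\bm{p}_{2s(t-s)/t}*f\bigr)(0)\,\d s.
\]

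Finally, I would bound the remaining integral by splitting $\e^{\beta(t-s)} \le \e^{\beta t}$ and applying Lemma \ref{lem:p*f} (say with $\beta'=1/2$, using that $s\wedge(t-s) \le t/2$) to obtain
\[
\int_0^t \e^{\beta(t-s)}\bigl(\bm{p}_{2s(t-s)/t}*f\bigr)(0)\,\d s \le 4\Upsilon(1)\, \e^{(\beta + 1/4)t}.
\]
Absorbing all $t$-dependence coming from $\alpha_1^2$, $\e^{\beta t}$, and the $(2\pi)^{-d}$ constant produces the desired form $A t^{-d}\e^{Bt}$ with constants $A, B$ independent of $n$ and $x$. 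There is no real obstacle here; the proof is just a careful bookkeeping exercise combining Lemmas \ref{lem:p*f} and \ref{lem:Du_n} with the Brownian-bridge identity \eqref{PPPP}. The only mild subtlety is ensuring that the $n$-uniformity, which relies on the fact that $\alpha_n$ stays bounded by $4\alpha_1$, is preserved through each inequality.
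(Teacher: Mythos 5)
Your proof is correct and follows essentially the same route as the paper's: Cauchy–Schwarz in $L^2(\Omega)$, Lemma \ref{lem:Du_n} with $k=2$ (using $\alpha_n\le 4\alpha_1$), the Brownian-bridge identity \eqref{PPPP} plus the semigroup property to reduce to $\int_0^t(\bm{p}_{2s(t-s)/t}*f)(0)\,\d s$, the bound $[\bm{p}_t(x)]^2\le(2\pi t)^{-d}$, and finally Lemma \ref{lem:p*f}. The only cosmetic differences are that you keep the $\e^{\beta(t-s)}$ factor explicit a bit longer before absorbing it, and you invoke Lemma \ref{lem:p*f} with $\beta'=1/2$ yielding $\Upsilon(1)$ whereas the paper takes $\beta'=1$ yielding $\Upsilon(2)$; both are finite under Dalang's condition.
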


\begin{proof}
	We compute directly, using Lemma \ref{lem:Du_n}, as follows:
	\begin{align*}
		\E\left(\|D u_n(t\,,x)\|_{\mathcal{H}}^2\right) &=
			\int_0^t\d s\int_{\R^d}\d y\int_{\R^d}f(\d y')\
			\E\left[ D_{s,y}u_n(t\,,x) D_{s,y+y'}u_n(t\,,x)\right]\\
		&\le\int_0^t\d s\int_{\R^d}\d y\int_{\R^d}f(\d y')
			\left\| D_{s,y}u_n(t\,,x) \right\|_2 \left\| D_{s,y+y'}u_n(t\,,x)\right\|_2\\
		&\le c\int_0^t\d s\int_{\R^d}\d y\int_{\R^d}f(\d y')\
			\bm{p}_{t-s}(x-y)\bm{p}_s(y)\bm{p}_{t-s}(x-y-y')\bm{p}_s(y+y'),
	\end{align*}
	where $c:=16\alpha_1^2 \exp(\beta_{7/8,2}t)$, using the constants
	of Lemma \ref{lem:Du_n}. Note that $\alpha_1$ depends on $t$, 
	and in fact Theorem \ref{th:U} ensures
	that $c\le c_1\exp(c_2 t)$ where $c_1$ and $c_2$ do not depend on $t$.
	Apply \eqref{PPPP} to see that
	\begin{align*}
		\E\left(\|D u_n(t\,,x)\|_{\mathcal{H}}^2\right) 	
			&\le c_1\e^{c_2t}[\bm{p}_t(x)]^2\int_0^t\d s\int_{\R^d}\d y\int_{\R^d}f(\d y')\
			\bm{p}_{s(t-s)/t}\left( y - \frac st x\right)
			\bm{p}_{s(t-s)/t}\left( y+y' - \frac st x\right)\\
		&=c_1\e^{c_2t}[\bm{p}_t(x)]^2\int_0^t\left( \bm{p}_{2s(t-s)/t}*f\right)(0)\,
			\d s\\
		&\le \frac{c_1\e^{(1+c_2)t}}{(2\pi t)^d}\int_0^t \e^{-\{s\wedge(t-s)\}}
			\left( \bm{p}_{2s(t-s)/t}*f\right)(0)\,\d s.
	\end{align*}
	Since $\Upsilon(2)<\infty$, an appeal to Lemma \ref{lem:p*f} completes the proof.
\end{proof}

We are in position to verify Proposition \ref{pr:Du}.

\begin{proof}[Proof of Proposition \ref{pr:Du}]
	The proof is similar to that of \cite[Theorem 6.4]{CKNP}. 
	Choose and fix $k\ge2$, $t>0$, and $x\in\R^d$.
	Thanks to \eqref{U} and \eqref{PPPP}, the proposition's assertion is equivalent to the following
	inequality, valid for a.e.\ $(s\,,y)\in(0\,,t)\times\R^d$:
	\[
		\|D_{s,y}u(t\,,x) \|_k \le \frac{64}{7}
		\exp\left\{ \frac t2\left[\beta_{7/8,k}+\frac{1}{2}\Upsilon^{-1}\left(\frac{1}{32z_k^2}\right)
		\right]\right\}
		\bm{p}_{t-s}(x-y)\bm{p}_s(y),
	\]
	We will prove the above reformulation of the proposition.
	
	Thanks to Lemma \ref{lem:E(||D||)} and closeablility properties of the Malliavin derivative
	operator (see Nualart \cite{Nualart}), it follows that, after possibly moving to subsequence, that
	$Du_n(t\,,x)$  converges to $Du(t\,,x)$ in the weak topology of
	$L^2(\Omega\, ; \mathcal{H})$.
	Then, we use a smooth approximation $\{\psi_\varepsilon\}_{\varepsilon>0}$
	to the identity in $\R_+\times \R^d$,  and apply Fatou's lemma and
	duality for $L^k$-spaces in order
	to find that, for almost every $(s\,,y) \in (0\,,t) \times \R^d$ and for all $k\ge 2$,
	\begin{align*}
		\|D_{s,y}u(t\,,x) \|_k & \le  \limsup_{\varepsilon \to 0}
			\left \| \int_{\R_+\times \R^d} D_{s'\!,y'} u(t\,,x)
			\psi_\varepsilon(s-s', y-y') \,\d s'\d y' \right\|_k\\
		 & \le  \limsup_{\varepsilon\to 0}
			\sup_{\|G \|_{k/(k - 1)}\le 1}
			\left| \int_{\R_+\times \R^d}  \E\left[ G D_{s'\!,y'} u(t\,,x) \right]
			\psi_\varepsilon(s-s', y-y')\, \d s'\d y'  \right|.
	\end{align*}
	Choose and fix a random variable $G\in L^{2}(\Omega)$ such that
	$ \| G \|_{k/(k-1)}\le 1$. We can find an unbounded subsequence $n(1) < n(2)<\cdots$
	of positive integers such that
	\begin{align*}
		& \left| \int_{\R_+\times \R^d}  \E \left[ G D_{s'\!,y'} u(t\,,x) \right]
			\psi_\varepsilon(s-s', y-y') \,\d s'\d y'  \right|  \\
		&= \lim _{\ell\rightarrow \infty} \left|
			\int_{\R_+\times \R^d}  \E\left[ G D_{s'\!,y'} u_{n(\ell)}(t\,,x) \right]
			\psi_\varepsilon(s-s', y-y')\, \d s'\d y'   \right| \\
		&\le\limsup_{\ell\to\infty}
			\int_{\R_+\times \R}  \left\| D_{s'\!,y'} u_{n(\ell)}(t\,,x) \right\|_k
			\psi_\varepsilon(s-s', y-y') \,\d s'\d y' \\
        	&\le \sup_{n\in\mathbb{N}}\alpha_n\int_{(0,t)\times \R^d}   \e^{\beta(t-s')/2}
			\bm{p}_{t - s'}(x - y')\bm{p}_{s'}(y')
			\psi_\varepsilon(s-s', y-y')\, \d s'\d y'\\
		&\le 4\alpha_1\e^{\beta t/2} \int_{(0,t)\times \R^d}   
			\bm{p}_{t - s'}(x - y')\bm{p}_{s'}(y')
			\psi_\varepsilon(s-s', y-y')\, \d s'\d y';
	\end{align*}
	see Lemma \ref{lem:Du_n}. Let $\varepsilon\to 0$ and appeal to \eqref{alpha_1} in order to finish.
\end{proof}

The second, and final, step of the proof of Theorem \ref{th:ergodic} 
is a Poincar\'e-type inequality for certain
nonlinear functionals of $U$. In order to describe that inequality, let us first choose
and fix points $\zeta^1,\ldots,\zeta^k\in\R^d$ and bounded Lipschitz-continuous functions
$g_1,\ldots,g_k\in C^1_b(\R)$ such that
\begin{equation}\label{eq:WLOG}
	g_j(0)=0\quad\text{and}\quad
	\text{\rm Lip}(g_j)=1
	\qquad \text{for every $j=1,\ldots,k$}.
\end{equation}
Then define for every $t,N>0$ and $x\in\R^d$,
\begin{equation}\label{eq:G}
	\mathcal{G}(t\,,x) := \prod_{j=1}^k g_j \left( U(t\,,x+\zeta^j)\right).
\end{equation}

\begin{lemma}\label{lem:V_N(t)}
	Choose and fix an integer $k\ge2$, points
	$x,\zeta^1,\ldots,\zeta^k\in\R^d$, and functions
	$g_1,\ldots,g_k\in C^1_b(\R)$ that satisfy \eqref{eq:WLOG}. Then, there exists
	a real number $A=A(t\,,k\,,g_1\,,\ldots,g_k)$ given by \eqref{eq:A} below such 
	that
	\[
		\left| \Cov\left( \mathcal{G}(t\,,0)\,,\mathcal{G}(t\,,x)\right)\right|
		\le A^2\sum_{j_0=1}^k\sum_{j_1=1}^k\int_0^t
		\left( \bm{p}_{2s(t-s)/t} * f\right)\left(\frac st (x + \zeta^{j_0}-\zeta^{j_1})\right)\d s.
	\]
\end{lemma}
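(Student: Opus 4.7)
The strategy is to apply the Poincar\'e-type inequality \eqref{Poincare:Cov} to $F=\mathcal{G}(t,0)$ and $G=\mathcal{G}(t,x)$, estimate the $L^2$-norms of their Malliavin derivatives by combining the product--chain rule with the pointwise heat-kernel bound of Proposition~\ref{pr:Du}, and then collapse the resulting double Gaussian integral via the semigroup identity $\bm{p}_r*\bm{p}_r=\bm{p}_{2r}$.

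First I would compute, by the product and chain rules,
\[
D_{s,y}\mathcal{G}(t,x)=\sum_{j=1}^{k}g_j'\!\left(U(t,x+\zeta^j)\right)D_{s,y}U(t,x+\zeta^j)\prod_{i\ne j}g_i\!\left(U(t,x+\zeta^i)\right).
\]
Since every $g_i\in C^1_b(\R)$ is bounded and $\|g_j'\|_\infty\le\text{\rm Lip}(g_j)=1$, the Minkowski inequality in $L^2(\Omega)$ gives the pointwise-in-$(s,y)$ bound
\[
\|D_{s,y}\mathcal{G}(t,x)\|_2\le A_0\sum_{j_0=1}^{k}\|D_{s,y}U(t,x+\zeta^{j_0})\|_2,\qquad A_0:=\max_{1\le j\le k}\prod_{i\ne j}\|g_i\|_\infty,
\]
and then Proposition~\ref{pr:Du} with $k=2$ yields
\[
\|D_{s,y}\mathcal{G}(t,x)\|_2\le A\sum_{j_0=1}^{k}\bm{p}_{s(t-s)/t}\!\left(y-\tfrac{s}{t}(x+\zeta^{j_0})\right),
\]
where
\begin{equation}\label{eq:A}
A:=A_0\, C_{t,2}=C_{t,2}\max_{1\le j\le k}\prod_{i\ne j}\|g_i\|_\infty.
\end{equation}

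Next, substituting the two such bounds (one with $x=0$ at shift $y$, the other at shift $y+y'$) into \eqref{Poincare:Cov} reduces the covariance to a double sum over $(j_0,j_1)$ of triple integrals of the form
\[
A^2\int_0^t\!\d s\int_{\R^d}\!f(\d y')\int_{\R^d}\bm{p}_{s(t-s)/t}\!\left(y-\tfrac{s}{t}\zeta^{j_0}\right)\bm{p}_{s(t-s)/t}\!\left(y+y'-\tfrac{s}{t}(x+\zeta^{j_1})\right)\d y.
\]
The inner $\d y$-integral equals $\bm{p}_{2s(t-s)/t}\bigl(y'-\tfrac{s}{t}(x+\zeta^{j_1}-\zeta^{j_0})\bigr)$ by the semigroup identity and symmetry of the heat kernel. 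Integrating against $f(\d y')$ then gives $(\bm{p}_{2s(t-s)/t}*f)\bigl(\tfrac{s}{t}(x+\zeta^{j_1}-\zeta^{j_0})\bigr)$, and the symmetry of $f$ (as a real, nonnegative-definite tempered measure) allows swapping $\zeta^{j_0}\leftrightarrow\zeta^{j_1}$ to recover the form stated in the lemma.

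The only real obstacle is bookkeeping: one must keep the shifts $\tfrac{s}{t}\zeta^{j}$ straight inside the Gaussian kernels so that they propagate accurately through the $\d y$-integration into the correct convolution argument $\tfrac{s}{t}(x+\zeta^{j_0}-\zeta^{j_1})$. Beyond that, no further analytic input is needed past Proposition~\ref{pr:Du} and the Clark--Ocone--Poincar\'e bound \eqref{Poincare:Cov}.
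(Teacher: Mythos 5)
Your proof follows the same route as the paper: chain rule for the Malliavin derivative of $\mathcal{G}$, the pointwise heat-kernel bound from Proposition~\ref{pr:Du}, the Poincar\'e inequality \eqref{Poincare:Cov}, and then the semigroup identity plus Fubini to collapse the Gaussian integrals. The only substantive difference is in the choice of $A$: you take $A_0=\max_{j}\prod_{i\ne j}\|g_i\|_\infty$ and $C_{t,2}$, whereas the paper uses the coarser (but $k$-uniform) bound $\max_j\|g_j\|_\infty^{k-1}$ and $C_{t,k}$ in its \eqref{eq:A}; both are valid upper bounds, yours slightly tighter, and the final inequality holds either way.
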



\begin{proof}
	By the chain rule of Malliavin calculus (see Nualart \cite{Nualart}),
	\[
		D_{s,z} \mathcal{G}(t\,,x) = \bm{1}_{(0,t)}(s)\sum_{j_0=1}^k
		\left(\prod_{\substack{j=1\\j \not= j_0}}^{k}
		g_j \left( U(t\,,x+\zeta^j) \right) \right)
		g_{j_0}' \left(U(t\,,x+\zeta^{j_0})\right)
		D_{s,z} U(t\,, x+ \zeta^{j_0}),
	\]
	for almost every $(s\,,z)\in(0\,,t)\times\R^d$. Therefore, Proposition \ref{pr:Du} ensures that
	\begin{align*}
		\left\| D_{s,z}\mathcal{G}(t\,,x)\right\|_k &\le \bm{1}_{(0,t)}(s)
			\adjustlimits
			\max_{1\le j\le k}\sup_{a\in\R}|g_j(a)|^{k-1}
			\sum_{j_0=1}^k \left\| D_{s,z} U(t\,, x+ \zeta^{j_0})\right\|_k\\
		&\le A\bm{1}_{(0,t)}(s)\sum_{j_0=1}^k \bm{p}_{s(t-s)/t}\left( z- \frac st (x+\zeta^{j_0})\right),
	\end{align*}
	with 
	\begin{equation}\label{eq:A}
		A  := \frac{64}{7}
		\exp\left\{ \frac t2\left[\beta_{7/8,k}+\frac{1}{2}\Upsilon^{-1}\left(\frac{1}{32z_k^2}\right)
		\right]\right\}\adjustlimits\max_{1\le j\le k}\sup_{a\in\R}|g_j(a)|^{k-1}.	
	\end{equation}
	It follows from the Poincar\'e inequality \eqref{Poincare:Cov} that
	$| \Cov( \mathcal{G}(t\,,x) \,, \mathcal{G}(t\,,0))|$ is bounded from above by
	\[
		A^2\sum_{j_0=1}^k\sum_{j_1=1}^k\int_0^t\d s\int_{\R^d}\d y\int_{\R^d}f(\d y')\
		\bm{p}_{s(t-s)/t}\left( y- \frac st (x+\zeta^{j_0})\right)
		\bm{p}_{s(t-s)/t}\left( y'+y- \frac st \zeta^{j_1}\right).
	\]
	Apply the semigroup property of the heat kernel together with Fubini's theorem to finish.
\end{proof}

\begin{proof}[Proof of Theorem \ref{th:ergodic}]

    Define
	\[
		V_N(t) := {\rm Var} \left (\frac{1}{N^d}\int_{[0,N]^d}  \mathcal{G}(t,x) \d x \right)
		\quad\text{and}\quad
		\mathcal{G}(x) :=
		\prod_{j=1}^k g_j ( U(t\,,x+\zeta^j))\qquad
		\text{for all $x\in\R^d$},
	\]
     where   $\mathcal{G}(t,x)$ has been defined in \eqref{eq:G} 
     and  the bounded functions $g_1,\ldots,g_k$ therein satisfy \eqref{eq:WLOG}.
    Since $U(t)$ is stationary [Theorem \ref{th:U}], 
    \cite[Lemma 7.2]{CKNP} implies the desired ergodicity provided that we prove that, for all $t>0$,
   \begin{align}\label{V_N}
   \lim_{N\to\infty}V_N(t)=0.
   \end{align}
    For every real number $N>0$, define the functions
    \begin{align}\label{I_N}
		I_N(x) := N^{-d} \bm{1}_{[0,N]^d}(x)  \quad \text{and}
		\quad \tilde {I}_N(x)= I_N(-x) \qquad\text{for $x \in \R^d$}.
   \end{align}
   
	By Lemma \ref{lem:V_N(t)}, 
	\begin{align}
		V_N(t)& = \frac{1}{N^{2d}}
			\int_{[0, N]^d}\d x\int_{[0,N]^d}\d y\  
			{\rm Cov} \left[  \mathcal{G}(t\,,x) \,, \mathcal{G}(t\,,y) \right]\nonumber\\
		&\le \frac{A^2}{N^{2d}}\sum_{j_0=1}^k\sum_{j_1=1}^k\int_0^t \d s
			\int_{[0, N]^d}\d x\int_{[0,N]^d}\d y\
			\left( \bm{p}_{2s(t-s)/t} * f\right)
			\left(\frac st (x-y + \zeta^{j_0}-\zeta^{j_1})\right)\nonumber\\
		& = A^2\sum_{j_0=1}^k\sum_{j_1=1}^k\int_0^t \d s
			\int_{\R^d} \d x \left(I_N*\tilde{I}_N\right)(x)
			\left( f*\bm{p}_{2s(t-s)/t} \right)
			\left(\frac st (x +    \zeta^{j_0}-\zeta^{j_1})\right).\nonumber
	\end{align}
	Therefore,  \eqref{inequality} implies that 
	\begin{align} 
		V_N(t)& \leq\frac{k^2A^2 }{\pi^d}\int_0^t\d s\int_{\R^d }\hat{f}(\d y) \
		\e^{-\frac{s(t-s)}{t}\|y\|^2}\prod_{j=1}^d\frac{1-\cos(Nsy_j/t)}{ (Nsy_j/t)^2}.\nonumber
	\end{align}
	The quantity $\prod_{j=1}^d\{1-\cos(Nsy_j/t)\}/(Nsy_j/t)^2$ 
	is bounded above by $2^{-d}$ and converges to zero as $N\to \infty$ for each $s>0$ and $y\not=0$.
	Since $\hat{f}\{0\}=0$, the dominated convergence theorem implies
	that $\lim_{N\rightarrow \infty} V_N(t)=0$,  taking into account that
	\[
		\int_0^t \d s\int_{\R^d } \hat{f}(\d y)\  \e^{-\frac{s(t-s)}{t}\|y\|^2} <\infty,
	\]
	which follows from Dalang's condition $\Upsilon(1)<\infty$.  
	This proves \eqref{V_N}, whence follows ergodicity.
\end{proof}

\section{Asymptotic variance}\label{analyzevar}

Recall the spatial average $\mathcal{S}_{N,t}$ and the quantity $\mathcal{R}(f)$ are defined in \eqref{average} and \eqref{R(f)}, respectively. 

\begin{theorem}[$d\ge1$]\label{th:AV:d>1}
	For all $t>0$, 
	\begin{equation}\label{eq:AV:d>1}
		\lim_{N\to\infty} N\Var(\mathcal{S}_{N,t}) =t\, \mathcal{R}(f).
	\end{equation}
	The quantity on the
	right-hand side is  strictly positive and \eqref{eq:AV:d>1} holds whenever $\mathcal{R}(f)$ is finite or infinite. 
\end{theorem}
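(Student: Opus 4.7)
My approach is to apply the It\^o--Walsh isometry to the mild form \eqref{mild:U} after substituting $U=1+(U-1)$ inside the stochastic integral. With
\[
\Phi_N^{t,r}(y):=N^{-d}\int_{[0,N]^d}\bm{p}_{r(t-r)/t}(y-\tfrac{r}{t}x)\,\d x,
\]
this decomposes $\mathcal{S}_{N,t}=M_N+R_N$, where $M_N:=\int_{(0,t)\times\R^d}\Phi_N^{t,r}(y)\,\eta(\d r\,\d y)$ is a centred Gaussian Wiener integral and $R_N:=\int_{(0,t)\times\R^d}\Phi_N^{t,r}(y)[U(r,y)-1]\,\eta(\d r\,\d y)$ is the residual. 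Because $\E U(r,\cdot)\equiv 1$ by Theorem \ref{th:U}, the It\^o--Walsh isometry gives $\Cov(M_N,R_N)=0$, hence $\Var(\mathcal{S}_{N,t})=\Var M_N+\Var R_N$ with both summands non-negative. The plan reduces to showing $N\Var M_N\to t\mathcal{R}(f)$ and $N\Var R_N\to 0$.

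For $M_N$, the Fourier computation from the proof of Theorem \ref{th:ergodic}, applied with all shifts set to zero, gives
\[
\int f(\d w)\Psi_N^{t,r}(w)=\frac{1}{\pi^d}\int\hat f(\d y)\,\e^{-r(t-r)\|y\|^2/t}\prod_{j=1}^d\frac{1-\cos(Nry_j/t)}{(Nry_j/t)^2},
\]
where $\Psi_N^{t,r}(w):=\int_{\R^d}\Phi_N^{t,r}(y)\Phi_N^{t,r}(y+w)\,\d y$. The change of variables $s=Nr/t$ (so that $Nry_j/t=sy_j$ and $r(t-r)/t=(st/N)(1-s/N)$) then yields
\[
N\Var M_N=\frac{t}{\pi^d}\int_0^N\!\d s\int\hat f(\d y)\,\e^{-(st/N)(1-s/N)\|y\|^2}\prod_{j=1}^d\frac{1-\cos(sy_j)}{(sy_j)^2}.
\]
The integrand is non-negative, bounded above by $\prod_j(1-\cos(sy_j))/(sy_j)^2$, and converges pointwise to this bound as $N\to\infty$. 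Dominated convergence (if $\mathcal{R}(f)<\infty$) or Fatou's lemma (if $\mathcal{R}(f)=\infty$) delivers $N\Var M_N\to t\mathcal{R}(f)$ in both cases.

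For $R_N$, an analogous computation using stationarity gives $\Var R_N=\int_0^t\d r\int f(\d w)\,C_r(w)\,\Psi_N^{t,r}(w)$ where $C_r(w):=\Cov(U(r,0),U(r,w))$. By Cauchy--Schwarz and stationarity $|C_r(w)|\le\Var U(r,0)=:\varepsilon(r)$ uniformly in $w$, and Theorem \ref{th:U} ensures $\sup_{0<r\le t}\varepsilon(r)<\infty$ together with $\varepsilon(r)\to 0$ as $r\to 0$. Using the trivial bound $\e^{-(st/N)(1-s/N)\|y\|^2}\le 1$ in the Fourier formula above,
\[
N\Var R_N\le\int_0^N\varepsilon(st/N)\,A(s)\,\d s,\qquad A(s):=\frac{1}{\pi^d}\int\hat f(\d y)\prod_{j=1}^d\frac{1-\cos(sy_j)}{(sy_j)^2},
\]
with $\int_0^\infty A(s)\,\d s=\mathcal{R}(f)$. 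When $\mathcal{R}(f)<\infty$, dominated convergence with the integrable dominator $\sup\varepsilon\cdot A$ gives $N\Var R_N\to 0$; when $\mathcal{R}(f)=\infty$, $N\Var(\mathcal{S}_{N,t})\to\infty$ follows immediately from $\Var R_N\ge 0$ and $N\Var M_N\to\infty$. Strict positivity of $\mathcal{R}(f)$ is immediate from $\hat f\not\equiv 0$. The principal technical obstacle is the passage to the limit in the rescaled $s$-integral as both its upper limit and its exponential parameter depend on $N$; this is handled uniformly by the majorant $\prod_j(1-\cos(sy_j))/(sy_j)^2$, which is precisely $\d s\otimes\hat f(\d y)$-integrable whenever $\mathcal{R}(f)$ is finite.
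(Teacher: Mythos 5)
Your proof is correct and arrives at exactly the paper's decomposition $V_N(t)=V_N^{(1)}(t)+V_N^{(2)}(t)$, with $\Var M_N=V_N^{(1)}(t)$ and $\Var R_N=V_N^{(2)}(t)$, followed by the same rescaling $s=Nr/t$ to identify $N V_N^{(1)}(t)\to t\mathcal{R}(f)$, the same bound $\chi_r(w)\le\chi_r(0)\to 0$ to kill $N V_N^{(2)}(t)$, and Fatou for the divergent case. The only genuine difference is how you obtain the split: you write $\mathcal{S}_{N,t}=M_N+R_N$ directly by substituting $U=1+(U-1)$ in the mild form and invoke $\E[M_NR_N]=0$ from $\E U\equiv 1$, whereas the paper goes through the covariance function $\chi_t(x)$ and its renewal-type identity (Lemma~\ref{lem:chi}), which encodes the same cancellation. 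Your route is a cleaner probabilistic shortcut to the split (it makes the orthogonality of the Gaussian piece and the residual explicit), while the paper's passes through a lemma it needs anyway elsewhere; both buy the identical subsequent analysis.
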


According to the criteria in Proposition \ref{pr:R} and Lemma \ref{lem:f:1} below, the value of $\mathcal{R}(f)$ could be finite or infinite. For example, if  $f(\d x)=\bm{p}_1(x)\d x$ and $d \geq 2$, then $\mathcal{R}(f)<\infty$; if $f$ is given by the Riesz kernel, i.e., $f(\d x)=\|x\|^{-\beta}\d x, 0<\beta<2\wedge d$, then  $\mathcal{R}(f)=\infty$.
Moreover, it is easy to deduce from Lemma \ref{lem:f:1} below that, in the case that $d=1$, $\mathcal{R}(f)$ is always infinite,
which might suggest that the above $1/N$ rate of decay of $\Var(\mathcal{S}_{N,t})$ is
not the right one in one dimension. Indeed, this is the case. And the following result identifies
the correct rate canonically as $N^{-1}\log N$ in dimension one.

\begin{theorem}[$d=1$]\label{th:AV:d=1}
	Assume $f(\R)<\infty$. Then for all $t>0$,
	\begin{align}\label{lim(V):d=1}
		tf(\R)\le
		\liminf_{N\to\infty} \frac{N}{\log N}\,\Var(\mathcal{S}_{N,t}) \le
		\limsup_{N\to\infty} \frac{N}{\log N}\,\Var(\mathcal{S}_{N,t}) \le
		2tf(\R).
	\end{align}
	Both bounds are sharp in the following sense:
	\begin{compactenum}
		\item If $f=a\delta_0$ for some $a>0$, then 
			$\Var(\mathcal{S}_{N,t}) \sim 2tf(\R) N^{-1}\log N$ as $N\to\infty$;
		\item If $\lim_{x\to\infty}\hat{f}(x)=0$, then
			$\Var(\mathcal{S}_{N,t}) \sim tf(\R) N^{-1}\log N$ as $N\to\infty$.
	\end{compactenum}
\end{theorem}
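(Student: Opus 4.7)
The plan is to start from an exact variance formula obtained by combining the It\^o--Walsh isometry with a stochastic Fubini applied to \eqref{mild:U}. With $\phi_N(s,y) := N^{-1}\int_0^N \bm{p}_{s(t-s)/t}(y - sx/t)\,dx$ and $G(s,z) := \E[U(s,0)U(s,z)]$ (well-defined and independent of the spatial base point by the stationarity in Theorem \ref{th:U}), the semigroup property of the heat kernel yields
\[
\Var(\mathcal{S}_{N,t}) = \int_0^t\!ds\int\!f(dz)\,G(s,z)\,\Phi_N(s,z), \quad \Phi_N(s,z) = \frac{t}{sN}\int_{-M}^{M}\Big(1-\frac{|u|}{M}\Big)\bm{p}_{2s(t-s)/t}(z+u)\,du,
\]
with $M = sN/t$. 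The first step is to reduce to the case $G\equiv 1$: the moment bound \eqref{moment:U} and $L^2$-continuity at $t=0$ from Theorem \ref{th:U} yield, via Cauchy--Schwarz and stationarity, $\sup_{z\in\R}|G(s,z)-1|\le \|U(s,0)\|_2\,\|U(s,0)-1\|_2 \to 0$ as $s\to 0$, while $G$ is uniformly bounded on $(0,t)\times\R$. Combined with $\Phi_N(s,z)\le t/(sN)$ and a threshold split at $s_N := 1/\log N$, one checks that $\int_0^t\!ds\int f(dz)\,(G(s,z)-1)\,\Phi_N(s,z) = o(\log N/N)$.

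It thus suffices to analyze $V_N(t) := \int_0^t\!ds\int f(dz)\,\Phi_N(s,z)$. Changing variable $\nu = sN/t$,
\[
V_N(t) = \frac{t}{N}\int_0^N \frac{d\nu}{\nu}\,H_N(\nu), \qquad H_N(\nu) := \int_{-\nu}^{\nu}\Big(1-\frac{|u|}{\nu}\Big)(f*\bm{p}_{\tau_\nu})(u)\,du,\qquad \tau_\nu := 2(\nu t/N)(1-\nu/N),
\]
so that $s = \nu t/N$. As $N\to\infty$ the Gaussian variance $\tau_\nu\to 0$ for each fixed $\nu$, so $H_N(\nu)\to H(\nu) := \int_{-\nu}^{\nu}(1-|u|/\nu)\,f(du)$ pointwise, with $H$ nondecreasing, $H(\infty) = f(\R)$, and $H(0^+) = f(\{0\})$. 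The Gaussian width $\sqrt{\tau_\nu}\asymp\sqrt{\nu t/N}$ matches the integration half-length $\nu$ precisely when $\nu\asymp t/N$, providing the natural cutoff. For the upper bound one uses the trivial bound $H_N(\nu)\le f(\R)$ for $\nu\ge t/N$ combined with $\int_{t/N}^N d\nu/\nu\sim 2\log N$, and a separate estimate $H_N(\nu)\lesssim f(\R)\sqrt{\nu N/t}$ for $\nu<t/N$ that yields only an $O(1)$ contribution to the logarithmic integral; this gives $\limsup \le 2tf(\R)$. For the lower bound, one restricts the $\nu$-integration to $[K,\sqrt N]\cup[\sqrt N,N]$ with large fixed $K$ and uses that on each of these sub-intervals $H_N(\nu)\ge f(\R)-\varepsilon$ for $N$ large (on the first because $\tau_\nu\to 0$ and $H(\nu)\to f(\R)$; on the second because $\nu\to\infty$ with $\tau_\nu\le t/2$ still forces $H_N(\nu)\to f(\R)$), delivering $\liminf \ge tf(\R)$.

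The two exact asymptotics then follow by analyzing $H_N$ more carefully. For $f=a\delta_0$ one has $(f*\bm{p}_{\tau_\nu})(u) = a\bm{p}_{\tau_\nu}(u)$, and a direct computation shows $H_N(\nu)\to a$ whenever $\nu\gg\sqrt{\tau_\nu}$, i.e., for all $\nu\gg t/N$; hence the integrand $a/\nu$ persists over the full range $[t/N,N]$ and one obtains $V_N(t)\sim 2at\log N/N$. For Rajchman $f$, one switches to the Fourier representation
\[
V_N(t) = \frac{t}{\pi N}\int_0^N d\nu\int\hat f(\xi)\,\frac{1-\cos(\nu\xi)}{(\nu\xi)^2}\,e^{-\tau_\nu\xi^2/2}\,d\xi;
\]
since $\hat f\in C_b(\R)$ and $\hat f(\xi)\to 0$ as $|\xi|\to\infty$, a dominated-convergence argument shows that for any fixed bounded interval of $\nu$ the $\xi$-integral stays bounded by a constant, so the contribution from $\nu\in(0,K)$ to the full double integral is $O(1)$; only the tail $\nu\gtrsim K$ contributes, and by the concentration $(1-\cos(\nu\xi))/(\nu\xi)^2\to (\pi/\nu)\delta_0(\xi)$ weakly one recovers $\hat f(0)\pi/\nu=f(\R)\pi/\nu$, giving $V_N(t)\sim tf(\R)\log N/N$ with factor $1$. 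The principal obstacle is Step 1 (reduction to $G\equiv 1$), where the rate $\sup_z|G(s,z)-1|=o(1)$ as $s\to 0$ must be strong enough to offset the borderline $1/s$ singularity in the $s$-integral; a secondary delicate point is managing $H_N$ uniformly through the transition zone $\nu\sim t/N$ to pin down the constants $f(\R)$ versus $2f(\R)$.
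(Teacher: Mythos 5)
Your reformulation of $\Var(\mathcal{S}_{N,t})$ as $\int_0^t\int f(\d z)\,G(s,z)\,\Phi_N(s,z)\,\d s\,$ is precisely the paper's decomposition $V_N=V_N^{(1)}+V_N^{(2)}$ (with $G(s,z)-1=\chi_s(z)$ in the paper's notation), and your "time-domain" route via the triangle-kernel function $H_N(\nu)$ is a perfectly good alternative to the Fourier-side splitting into $T_{1,1},\,T_{1,2,1},\,T_{1,2,2}$ that the paper uses for the two-sided bound and for Item 1. The upper bound $\limsup\le 2tf(\R)$, the lower bound $\liminf\ge tf(\R)$, and the $f=a\delta_0$ case come out correctly from your $H_N$ analysis (the paper actually cites \cite{CKNP_c} for Item 1 rather than redoing it). Two cautionary remarks on what your outline leaves open: Step 1 (killing $V_N^{(2)}$) does work with a threshold like $s_N=1/\log N$, but only once one has a uniform, $\log N$-scale estimate for $\displaystyle\int_0^t\exp\!\bigl(-\tfrac{t(t-s)z^2}{sN^2}\bigr)\tfrac{\d s}{s}$; the paper imports exactly this as Lemma A.1 of \cite{CKNP_c}. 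You flag this as the "principal obstacle" but do not supply the estimate, so that step is not self-contained.

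The genuine gap is in Item 2. You assert that "for any fixed bounded interval of $\nu$ the $\xi$-integral stays bounded by a constant, so the contribution from $\nu\in(0,K)$ to the full double integral is $O(1)$." This is false. The only uniform bound available is $\int\hat f(\xi)\,\varphi(\nu\xi)\,\e^{-\tau_\nu\xi^2/2}\,\d\xi\le f(\R)\,\pi/\nu$ (using $\hat f\le f(\R)$ and $\int\varphi(\nu\xi)\,\d\xi=\pi/\nu$), and the $\xi$-integral genuinely behaves like $1/\nu$ for small $\nu$, not like a constant: as $\nu\downarrow 0$ the mass of $\varphi(\nu\,\cdot)$ sits at $|\xi|\asymp 1/\nu$ and the Gaussian factor ($\tau_\nu\asymp\nu t/N$) offers no help there. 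Consequently $\int_{1/N}^K\frac{\d\nu}{\nu}\cdot(\text{$\xi$-integral})$ is a priori $\Theta(\log N)$, not $O(1)$; for instance $\hat f(\xi)\sim C/\log|\xi|$ (Rajchman, non-integrable) yields a $\Theta(\log\log N)$ contribution from $\nu\in[1/N,K]$ — which is indeed $o(\log N)$ but certainly not $O(1)$, and the reason it is $o(\log N)$ at all is precisely the Rajchman hypothesis. So the key step of Item 2 — showing the small-$\nu$ (equivalently small-$s$) region contributes $o(\log N)$ — collapses in your sketch. The paper handles this with a genuinely multi-scale argument inside $T_{1,2,1}$: first discard $r\ge(\log N)^2$ (which gives the main $\pi f(\R)\log N$ term), then discard $r\ge 1/\sqrt{\log N}$, then split $|z|\gtrless(\log N)^{1/4}$, using $\hat f\le f(\R)$ on the small-$|z|$ piece and $\sup_{w\ge(\log N)^{1/4}}\hat f(w)\to 0$ on the large-$|z|$ piece — see the displayed estimates culminating in \eqref{garb}. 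Some version of that careful threshold bookkeeping (or an equivalent amount of work on $H_N(\nu)$ in physical space, exploiting $f(\{0\})=0$ quantitatively) is unavoidable; it cannot be replaced by a single dominated-convergence step on a fixed $\nu$-interval.
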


\begin{remark}
	The condition in Item 2 of Theorem \ref{th:AV:d=1} is a well-known one.
	Indeed, finite Borel measures whose Fourier transforms vanish at infinity are called
	\emph{Rajchman measures}. See Lyons \cite{Lyons} for the background and rich
	history of the work on Rajchman measures in classical harmonic analysis.
\end{remark}

We now turn to the Riesz kernel case.  Define
\begin{equation}\label{varphi}
	\varphi(y) := \frac{1-\cos y}{y^2}\quad\text{for all $y\in\R\setminus\{0\}$},
\end{equation}
and $\varphi(0):=1/2$ to preserve continuity.

\begin{theorem}[Riesz kernel]\label{Riesz}
          Assume $f(\d x)=\|x\|^{-\beta}\d x$  
          and $\hat{f}(\d x)=\kappa_{\beta, d}\,\|x\|^{\beta-d}\d x$, where $0<\beta< 2\wedge d$ and $\kappa_{\beta, d}$
           is a positive constant depending on $\beta$ and $d$.
         \begin{compactenum}
            \item  If $0<\beta<1$, then
      \begin{equation} \label{2var:beta<1}
\lim_{N\rightarrow \infty} N^{\beta} V_N(t)= \frac t {1-\beta} \int_{[-1,1]^d}   \| z\| ^{-\beta}  \prod_{i=1}^d (1- |z_i|)   \d z :=t \, \sigma_{0, \beta, d}.
\end{equation}
         \item If $1=\beta < 2\wedge d$, then
        \begin{align}\label{var:beta=1}
        \lim_{N\to\infty}\frac{N}{\log N}V_N(t)=  \frac{2t\, \kappa_{1, d}}{\pi^d } \int_{\R^d} \|z\|^{1-d}
        \prod_{j=1}^d\varphi(z_j)  \d z:= t\, \sigma_{1, \beta, d}.
       \end{align}
                \item If $1<\beta < 2\wedge d$, then
         \begin{align}\label{var:beta>1}
         \lim_{N\to\infty}N^{2-\beta}V_N(t)= \frac{t^{2-\beta}\,\kappa_{\beta, d}}{\pi^d} \int_{\R^d}\|z\|^{2-\beta-d}
         \prod_{j=1}^d\varphi(z_j)\d z \int_{0}^{\infty} r^{\beta-2} \e^{-r}\d r:= t^{2-\beta}\sigma_{2,\beta, d}.
         \end{align}
\end{compactenum}
\end{theorem}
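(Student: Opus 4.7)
The plan is to base the analysis on the Clark-Ocone representation \eqref{Clark-Ocone}. Differentiating \eqref{mild:U} in the Malliavin sense and taking conditional expectations gives $\E[D_{s,z}U(t,x)\mid\mathcal{F}_s] = \bm{p}_{s(t-s)/t}(z-sx/t)\,U(s,z)$, so that by the It\^o--Walsh isometry
\begin{equation*}
V_N(t) = \int_0^t ds \int_{\R^d} dz \int_{\R^d} f(dz')\, \psi_N(s,z)\,\psi_N(s,z+z')\,\Phi_s(z'),
\end{equation*}
where $\psi_N(s,z) := N^{-d}\int_{[0,N]^d}\bm{p}_{s(t-s)/t}(z - sx/t)\,dx$ and $\Phi_s(z') := \E[U(s,z)U(s,z+z')]$ is independent of $z$ by the spatial stationarity established in Theorem \ref{th:U}. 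Writing $\Phi_s = 1 + K_s$ with $K_s(z')=\Cov(U(s,z),U(s,z+z'))$ splits $V_N(t) = V_N^{(0)}(t) + V_N^{(1)}(t)$, where $V_N^{(0)}$ corresponds to $\Phi_s\equiv 1$ (and equals the variance of the first Wiener chaos of $\mathcal{S}_{N,t}$).

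Next I would compute the limit of $V_N^{(0)}(t)$. Integrating out $z$ by the semigroup property, then using \eqref{Fourier} together with $\hat{f}(d\xi) = \kappa_{\beta,d}\|\xi\|^{\beta-d}d\xi$, yields
\begin{equation*}
V_N^{(0)}(t) = \frac{\kappa_{\beta,d}}{\pi^d}\int_0^t ds \int_{\R^d}\|\xi\|^{\beta-d} \e^{-s(t-s)\|\xi\|^2/t}\prod_{j=1}^d \varphi\!\left(\tfrac{Ns\xi_j}{t}\right) d\xi.
\end{equation*}
For Case (1) with $\beta<1$, I would work in the spatial-domain version and rescale $z' = Nv$, $x-x' = Nw$; since $N^d\bm{p}_\sigma(N\cdot)$ tends weakly to $\delta_0$, dominated convergence produces the limit $\int_0^t(s/t)^{-\beta}ds \cdot \int_{[-1,1]^d}\|u\|^{-\beta}\prod_j(1-|u_j|)du = t\sigma_{0,\beta,d}$. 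For Case (2) with $\beta=1$, the Fourier substitution $\eta = Ns\xi/t$ isolates a factor $s^{-1}$; the Gaussian factor is essentially $1$ on the range $t/N^2 \lesssim s \le t$, and the $s$-integral contributes $\log N$ after dominated convergence in $\eta$. For Case (3) with $1<\beta<2\wedge d$, the same substitution makes the $s$-integral divergent at $s=0$; the Gaussian factor $\exp(-(t-s)t\|\eta\|^2/(N^2 s))$ provides the cutoff, and the change of variable $r = t(t-s)\|\eta\|^2/(N^2 s)$ converts the $s$-integral into $\int_0^\infty r^{\beta-2}\e^{-r}dr$, yielding the limit stated in \eqref{var:beta>1}.

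Finally I would show that $V_N^{(1)}(t)$ is of strictly smaller order than $V_N^{(0)}(t)$ in each case. Applying Clark-Ocone at time $s$ to $U(s,\cdot)$ gives $K_s(z') = \int_0^s dr \int f(du')\,\bm{p}_{2r(s-r)/s}(u'-rz'/s)\,\Phi_r(u')$, and the moment bound \eqref{moment:U} together with the explicit identity $(\bm{p}_\sigma*\|\cdot\|^{-\beta})(y) = \sigma^{-\beta/2}F(\|y\|^2/\sigma)$, where $F$ is smooth with $F(\rho)\sim\rho^{-\beta/2}$ as $\rho\to\infty$, yields the two-regime estimate $K_s(z')\lesssim s^{1-\beta/2}$ for $\|z'\|\lesssim\sqrt{s}$ and $K_s(z')\lesssim \|z'\|^{-\beta}$ for $\|z'\|\gg\sqrt{s}$. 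Plugging this into the formula for $V_N^{(1)}$ reduces its analysis to the same Fourier/spatial scaling used in Step~2, but with $f(dz')$ effectively replaced by $K_s(z')f(dz')\sim \|z'\|^{-2\beta}dz'$ in the tail, which under the same rescalings gives decay $N^{-2\beta}$, $N^{-2}\log N$, and $N^{2\beta-4}$ in the three cases respectively --- strictly smaller than $V_N^{(0)}(t)$. The main obstacle, and the step I expect to be most delicate, is Case~(3): since the leading contribution to $V_N^{(0)}$ already comes from the boundary $s\to 0$, one cannot afford to use the uniform bound on $K_s$, and must instead track the $\|z'\|$-dependence carefully using \eqref{PPPP} and a second application of the Fourier representation to extract the correct cancellations between the two regimes for $z'$.
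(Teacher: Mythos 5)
Your decomposition is the same as the paper's \eqref{V1+2}: after integrating out $z$ by the semigroup property, the Clark--Ocone/isometry representation with $\Phi_s=1+\chi_s$ reproduces exactly the split furnished by Lemma \ref{lem:chi}, just obtained via Clark--Ocone rather than via the covariance recursion. Your plan for the leading term (spatial rescaling for $\beta<1$; the Fourier substitution $\eta=Ns\xi/t$ followed by $r=t(t-s)\|\eta\|^2/(N^2s)$ for $\beta\ge1$) also matches what the paper carries out, except that for $\beta=1$ the $s$-integral produces $2\log N+O(1)$, not $\log N$; that factor of $2$ is precisely where the constant in $\sigma_{1,\beta,d}$ comes from (cf.\ Lemma A.1 of \cite{CKNP_c} and \eqref{A.1:lim}).

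The real gap is in your treatment of the correction term. The two-regime estimate $K_s(z')\lesssim\|z'\|^{-\beta}$ for $\|z'\|\gg\sqrt{s}$ is false when $\beta\ge1$. Starting from $K_s(z')\lesssim\int_0^s(\bm{p}_{2r(s-r)/s}*\|\cdot\|^{-\beta})(rz'/s)\,\d r$, the estimate $(\bm{p}_\sigma*\|\cdot\|^{-\beta})(y)\lesssim\|y\|^{-\beta}$ gives an $r$-integrand $\asymp(r\|z'\|/s)^{-\beta}$, and $\int_0^s r^{-\beta}\,\d r$ diverges at $r=0$ for $\beta\ge1$; one must switch to the $\sigma^{-\beta/2}$ bound near $r=0$, and tracking the crossover at $r^*\asymp s^2/\|z'\|^2$ yields $K_s(z')\lesssim s^{2-\beta}\|z'\|^{\beta-2}$ for $1<\beta<2$ (with an extra logarithm when $\beta=1$), which decays more slowly than $\|z'\|^{-\beta}$. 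Whether that weaker bound still delivers the claimed order for $V_N^{(1)}$ would require a fresh calculation that the proposal does not contain, and you yourself flag Case~(3) as the danger point. The paper avoids the quantitative route entirely: in \eqref{V_N12} it bounds the correction term by the very Fourier integral that produces the leading term, with the integrand multiplied by $\chi_{t(1+rN^2/(t\|z\|^2))^{-1}}(0)$, and then only the soft facts that $\chi_s(0)$ is bounded (Theorem \ref{th:U}) and $\chi_s(0)\to0$ as $s\to0$ (Lemma \ref{lem:chi->0}) are needed, so the correction is dispatched by dominated convergence in all three cases at once, with no pointwise decay estimate on $\chi_s$ in the spatial variable. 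I would replace your two-regime estimate by that dominated-convergence argument.
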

We now begin  to work toward proving the above theorems. First, we denote 
\begin{equation}\label{V_N(t)}
 	V_N(t):=\Var(\mathcal{S}_{N,t}) = \int_{\R^d} \left( I_N*\tilde{I}_N\right)(x)
	\chi_t(x)\,\d x,
\end{equation}
where $I_N$ and $\tilde{I}_N$, defined in \eqref{I_N}, are given by 
$I_N(x) := N^{-d} \bm{1}_{[0,N]^d}(x)  $ and
$ \tilde {I}_N(x)= I_N(-x) $ for $x \in \R^d$, and for every $N,t>0$ and $x\in\R^d$,
\begin{equation}\label{I:chi}
	\chi_t(x) := \Cov\left[ U(t\,,0) ~,~ U(t\,,x)\right].
\end{equation}

Now we begin to establish a series of supporting lemmas.

\begin{lemma}[$d\ge1$]\label{lem:chi}
 	Let $\chi$ be defined by \eqref{I:chi}. Then,
	for every $t>0$ and $x\in\R^d$,
 	\[
		\chi_t(x) = \int_0^t\left( \bm{p}_{2s(t-s)/t}*f\right)(sx/t)\,\d s
		+ \int_0^t\d s\int_{\R^d} f(\d y)\
		\bm{p}_{2s(t-s)/t}\left( y - \frac st x\right)\chi_s(y).
	\]
\end{lemma}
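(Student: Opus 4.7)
The plan is to apply the It\^o--Walsh isometry to the mild formulation \eqref{mild:U} evaluated at the two points $0$ and $x$, and then exploit the spatial stationarity of $U(s,\cdot)$ established in Theorem \ref{th:U}.

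\textbf{Step 1: Set up the covariance.} From \eqref{mild:U}, write
\[
	U(t,0) - 1 = \int_{(0,t)\times\R^d} \bm{p}_{s(t-s)/t}(y)\, U(s,y)\,\eta(\d s\,\d y),\qquad
	U(t,x) - 1 = \int_{(0,t)\times\R^d} \bm{p}_{s(t-s)/t}\!\left(y-\tfrac{s}{t}x\right) U(s,y)\,\eta(\d s\,\d y).
\]
Both stochastic integrals have mean zero (the integrands are predictable and, by \eqref{moment:U}, in the appropriate $L^2$ space), so $\E[U(t,z)]=1$ for every $z\in\R^d$, and $\chi_t(x)$ equals the covariance of the two stochastic integrals.

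\textbf{Step 2: Apply the It\^o--Walsh isometry.} This yields
\[
	\chi_t(x) = \int_0^t \d s \int_{\R^d}\d y\int_{\R^d} f(\d w)\
	\bm{p}_{s(t-s)/t}(y)\,\bm{p}_{s(t-s)/t}\!\left(y+w-\tfrac{s}{t}x\right)\E\!\left[U(s,y)\,U(s,y+w)\right].
\]
Now use stationarity: $\E[U(s,y)U(s,y+w)] = 1 + \Cov[U(s,y),U(s,y+w)] = 1 + \chi_s(w)$, which is independent of $y$. This splits $\chi_t(x)$ into two pieces corresponding to the summands $1$ and $\chi_s(w)$.

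\textbf{Step 3: Collapse the $y$-integral via the semigroup property.} Using the evenness of $\bm{p}_r$ and the identity $\bm{p}_r * \bm{p}_r = \bm{p}_{2r}$, one obtains
\[
	\int_{\R^d} \bm{p}_{s(t-s)/t}(y)\,\bm{p}_{s(t-s)/t}\!\left(y + w - \tfrac{s}{t}x\right)\d y
	= \bm{p}_{2s(t-s)/t}\!\left(w-\tfrac{s}{t}x\right).
\]
The contribution from the $1$ term then becomes $\int_0^t \d s \int_{\R^d} f(\d w)\, \bm{p}_{2s(t-s)/t}(w - sx/t) = \int_0^t (\bm{p}_{2s(t-s)/t}*f)(sx/t)\,\d s$, and the contribution from the $\chi_s(w)$ term gives precisely the iterated integral in the statement (after renaming $w$ to $y$). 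The main technical care needed is the justification of the It\^o--Walsh isometry in Step 2, which is standard given the moment bound \eqref{moment:U} and the predictability of the integrands. No obstacle beyond bookkeeping is expected.
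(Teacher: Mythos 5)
Your argument is correct and follows essentially the same route as the paper's proof: both use the mild formulation \eqref{mild:U}, the It\^o--Walsh isometry, the stationarity of $U(s,\cdot)$ to replace $\E[U(s,y)U(s,y+w)]$ by $1+\chi_s(w)$, and the semigroup property of the heat kernel to collapse the $y$-integral.
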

 
\begin{proof}
 	Apply \eqref{mild:U} and elementary properties of the Walsh integral to see that
	\begin{align*}
		&\E\left[ U(t\,,0)U(t\,,x)\right]\\
		&= 1 + \int_0^t\d s\int_{\R^d}\d y'\int_{\R^d}f(\d y)\
			\bm{p}_{s(t-s)/t}(y')\bm{p}_{s(t-s)/t}\left( y+y'-\frac st x\right)
			\E\left[ U(s\,,y')U(s\,,y+y')\right]\\
		&= 1 + \int_0^t\d s\int_{\R^d}\d y'\int_{\R^d}f(\d y)\
			\bm{p}_{s(t-s)/t}(y')\bm{p}_{s(t-s)/t}\left( y+y'-\frac st x\right)
			\E\left[ U(s\,,0)U(s\,,y)\right],
	\end{align*}
	owing to the stationarity (Theorem \ref{th:U}). This and the semigroup property
	of the heat kernel together imply the lemma since 
	$\E[U(t\,,0)U(t\,,x)]=\chi_t(x)+1.$
\end{proof}

Our second supporting lemma describes the behavior of $\chi_t$ as $t\to0$.

\begin{lemma}[$d\ge1$]\label{lem:chi->0}
	$\lim_{t\downarrow0}\chi_t(x)=0$ uniformly for all $x\in\R^d$.
\end{lemma}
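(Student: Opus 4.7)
The plan is to reduce the statement to the $L^2$-convergence $U(t,x)\to 1$ as $t\to 0$ that is already contained in Theorem \ref{th:U}, and then exploit spatial stationarity to make the resulting bound independent of $x$.

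First, I would rewrite the covariance in centered form. Setting $V_t(y):=U(t,y)-1$, an elementary expansion gives
\[
\chi_t(x)=\Cov\bigl[U(t,0),U(t,x)\bigr]=\E\bigl[V_t(0)V_t(x)\bigr]-\E[V_t(0)]\,\E[V_t(x)],
\]
so by the Cauchy--Schwarz inequality,
\[
|\chi_t(x)|\le \|V_t(0)\|_2\,\|V_t(x)\|_2+\|V_t(0)\|_1\,\|V_t(x)\|_1\le 2\,\|V_t(0)\|_2\,\|V_t(x)\|_2.
\]

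Next, I would invoke the spatial stationarity of $U(t)$ from Theorem \ref{th:U}, which implies that $\|V_t(x)\|_2=\|U(t,x)-1\|_2$ does not depend on $x\in\R^d$. Consequently,
\[
\sup_{x\in\R^d}|\chi_t(x)|\le 2\,\|U(t,0)-1\|_2^{\,2}.
\]
The final ingredient is the last statement of Theorem \ref{th:U}, which asserts that $U(t,0)\to 1$ in $L^2(\Omega)$ as $t\downarrow 0$; in fact, the proof of part 3 of Theorem \ref{th:U} provides the quantitative estimate $\E(|U(t,0)-1|^2)\le 4K\e\,\Upsilon(2/t)$, and $\Upsilon(2/t)\to 0$ as $t\downarrow 0$ by dominated convergence (using Dalang's condition $\Upsilon(1)<\infty$). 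Combining this with the stationarity-based bound above yields $\lim_{t\downarrow 0}\sup_{x\in\R^d}|\chi_t(x)|=0$, which is the claimed uniform convergence.

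There is no real obstacle here: the whole argument is essentially a two-line Cauchy--Schwarz estimate once stationarity has been used to eliminate the $x$-dependence. The only subtle point is to make sure that the $L^2$-convergence in Theorem \ref{th:U} is genuinely quantitative/uniform, but this is already built into that theorem via the explicit moment bound \eqref{moment:U} and the proof of its third part.
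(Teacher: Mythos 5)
Your argument is correct, and it takes a noticeably shorter route than the paper's. The paper bounds $\chi_t(x)\le\chi_t(0)$ by Cauchy--Schwarz and stationarity (as you do), but then controls $\chi_t(0)$ through the self-referential integral equation of Lemma~\ref{lem:chi} together with positivity of the solution (Remark~\ref{positivity}) and the bound of Lemma~\ref{lem:p*f}, ultimately arriving at $\chi_t(0)\le(1+C)4\e^{\beta t}\Upsilon(2\beta)$. You instead bypass Lemma~\ref{lem:chi} completely: since $\E[U(t,x)]=1$ by \eqref{mild:U}, the covariance reduces to $\E[V_t(0)V_t(x)]$ with $V_t:=U(t,\cdot)-1$, and Cauchy--Schwarz plus stationarity gives $\sup_x|\chi_t(x)|\le\|U(t,0)-1\|_2^2$ (your factor of $2$ is actually unnecessary since $\E[V_t(\cdot)]=0$), which vanishes as $t\downarrow 0$ by the already-proved part 3 of Theorem~\ref{th:U}. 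Both approaches land on the same quantitative bound $\lesssim\Upsilon(2/t)\to 0$, but yours is more economical as it recycles an existing result rather than re-deriving a comparable estimate from the renewal-type equation for $\chi_t$.
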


\begin{proof}
	It is easy to deduce from Lemma \ref{lem:chi} and positivity of the solution
	 (see Remark \ref{positivity}) that
	\begin{equation}\label{chi>0}
		\chi_t(x) \ge \int_0^t\left( \bm{p}_{2s(t-s)/t}*f\right)(sx/t)\,\d s\ge0
		\qquad\text{for all $t>0$ and $x\in\R^d$}.
	\end{equation}
	Now,
	the Cauchy--Schwarz inequality and stationarity together ensure that
	$\chi_t(x)\le\chi_t(0)$. Therefore,  it suffices to prove that
	$\chi_t(0)\to0$ as $t\downarrow0$. Theorem \ref{th:U} ensures that 
	$C:=\sup_{t\in(0,1)}\chi_t(0)=\sup_{t\in(0,1)}\sup_{x\in\R^d}\chi_t(x)<\infty$.
	Therefore, we deduce from Lemma \ref{lem:chi}  that
	\[
		\chi_t(0) \le  (1+C)\int_0^t\left( \bm{p}_{2s(t-s)/t}*f\right)(0)\,\d s.
	\]
	Since $\chi_t(0) \le  (1+C)4 \e^{\beta t}\Upsilon(2\beta)$ for every $\beta,t>0$
	[Lemma \ref{lem:p*f}], it follows that
	\[
		\limsup_{t\to0} \chi_t(0)\le(1+C)4 \lim_{\beta\to\infty}\Upsilon(2\beta)=0.
	\]
	This concludes the proof.
\end{proof}

In light of \eqref{V_N(t)} and Lemma \ref{lem:chi}, we write 
\begin{align}\label{V1+2}
          V_N(t)=\Var(\mathcal{S}_{N,t}) = V_N^{(1)}(t) +  V_N^{(2)}(t), 
\end{align}
where
\begin{align}
           V_N^{(1)}(t)&= \int_0^t\d s\int_{\R^d}\d x\,\left( I_N*\tilde{I}_N\right)(x)
			\left( \bm{p}_{2s(t-s)/t}*f\right)(sx/t)\label{V^{(1)}_N(t)},\\
			V_N^{(2)}(t)&= 
           \int_0^t\d s\int_{\R^d}\d x\,\left( I_N*\tilde{I}_N\right)(x)
			\int_{\R^d} f(\d y)\
		\bm{p}_{2s(t-s)/t}\left( y - \frac st x\right)\chi_s(y). \label{V^{(2)}_N(t)}
\end{align}
As we will see, the main contribution for the asymptotic behavior of $V_N(t)$ is $V_N^{(1)}(t)$,
 thanks to Lemma \ref{lem:chi->0}.

\subsection{Analysis in dimension $d\ge2$}

The primary goal of this section is to prove Theorem \ref{th:AV:d>1}.  Therefore,
in this section, we will not assume that $d\ge2$ unless we say so explicitly.
Recall the function $\varphi$ defined in \eqref{varphi}.

\begin{lemma}[$d\ge1$]\label{lem:AV:1}
	For every $N,t>0$,
 	\begin{align*}
		V_N^{(1)}(t)&=\int_{\R^d}\left( I_N*\tilde{I}_N\right)(x)\,\d x
			\int_0^t\d s\left( \bm{p}_{2s(t-s)/t}*f\right)(sx/t)\\
		&= \frac{t}{N\pi^d} \int_0^N\d s\int_{\R^d}\hat{f}(\d z)\,\e^{-t\|z\|^2(1-s/N)s/N} 
		\prod_{j=1}^d\varphi(z_js).
	\end{align*}
\end{lemma}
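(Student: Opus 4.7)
The first equality is nothing more than an application of Fubini's theorem to the definition \eqref{V^{(1)}_N(t)}, so the substance of the lemma lies in the second equality. My plan is to pass to the Fourier side via \eqref{Fourier}, exchange orders of integration, recognize the spatial integral as the square modulus of the Fourier transform of $I_N$, and finally rescale time by $N/t$.

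More concretely, the formula \eqref{Fourier} with $r=2s(t-s)/t$ gives
\[
\left(\bm{p}_{2s(t-s)/t}*f\right)(sx/t) = \frac{1}{(2\pi)^d}\int_{\R^d}\e^{-s(t-s)\|y\|^2/t}\,\e^{i(s/t)x\cdot y}\,\hat{f}(\d y).
\]
Substituting this in the first expression for $V_N^{(1)}(t)$ and invoking Fubini (which is legitimate since $\Upsilon(1)<\infty$ bounds the Gaussian-weighted $\hat f$-integral and $I_N*\tilde I_N$ is a bounded, compactly supported function), I bring the $x$-integration innermost:
\[
V_N^{(1)}(t)=\frac{1}{(2\pi)^d}\int_0^t\d s\int_{\R^d}\hat{f}(\d y)\,\e^{-s(t-s)\|y\|^2/t}
\left[\int_{\R^d}\left(I_N*\tilde I_N\right)(x)\,\e^{i(s/t)y\cdot x}\d x\right].
\]

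Now I exploit the product structure $I_N(x)=N^{-d}\prod_{j=1}^d\mathbf 1_{[0,N]}(x_j)$ to compute the bracketed Fourier integral explicitly. Since $\widehat{I_N*\tilde I_N}(\xi)=|\widehat{I_N}(\xi)|^2$ and $\widehat{I_N}(\xi)=N^{-d}\prod_{j=1}^d(1-\e^{-iN\xi_j})/(i\xi_j)$, evaluating at $\xi=-sy/t$ yields
\[
\int_{\R^d}\left(I_N*\tilde I_N\right)(x)\,\e^{i(s/t)y\cdot x}\d x=\prod_{j=1}^d\frac{2(1-\cos(sNy_j/t))}{N^{2}(sy_j/t)^{2}}.
\]
The factor $2^d/(2\pi)^d=1/\pi^d$ combines with the above to produce $\pi^{-d}\prod_{j}(1-\cos(sNy_j/t))/(sNy_j/t)^2\cdot\{\text{rescaling}\}$; see \eqref{varphi}.

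Finally, I perform the change of variables $s\mapsto u=sN/t$, under which $s(t-s)/t=t(u/N)(1-u/N)$, $sy_jN/t=uy_j$, and $\d s=(t/N)\d u$. This turns the product into $\prod_j\varphi(uy_j)$, extracts the factor $t/N$, and produces exactly the claimed expression after renaming $u\to s$ and $y\to z$. There is no real obstacle here: every step is either Fubini or an algebraic identity, and Fubini is fully justified by Dalang's condition together with the rapid decay of $\e^{-s(t-s)\|y\|^2/t}$ on the support $s\in(0,t)$.
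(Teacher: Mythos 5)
Your proof is correct and follows essentially the same route as the paper, which packages your steps 2--5 into Lemma~\ref{identity} (the Fourier/Plancherel identity for $I_N*\tilde{I}_N$) and then performs the time rescaling $s\mapsto st/N$ exactly as you do. The only minor slip is the formula $\widehat{I_N}(\xi)=N^{-d}\prod_j(1-\e^{-iN\xi_j})/(i\xi_j)$, which differs by a harmless phase $\prod_j\e^{-iN\xi_j}$ from the correct $N^{-d}\prod_j(\e^{iN\xi_j}-1)/(i\xi_j)$, but this disappears as soon as you take $|\cdot|^2$.
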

 
\begin{proof}
 	By \eqref{equal}, 
	 \begin{align*}
		&\int_{\R^d}\left( I_N*\tilde{I}_N\right)(x)\,\d x
			\int_0^t\d s\left( \bm{p}_{2(t-s)/t}*f\right)(sx/t)\\
		&\hskip1in= \frac{1}{\pi^d} \int_0^t\d s\int_{\R^d}\hat{f}(\d z)\,\e^{-s(t-s)\|z\|^2/t} 
		\prod_{j=1}^d\varphi(Nz_js/t)\\
		&\hskip1in=  \frac{t}{N\pi^d} \int_0^N\d s\int_{\R^d}\hat{f}(\d z)\,\e^{-t\|z\|^2(1-s/N)s/N} 
		\prod_{j=1}^d\varphi(z_js),
	\end{align*}
	where in the second equality we use change of variable ($s\to st/N$).
\end{proof}

Before we prove Theorem \ref{th:AV:d>1}, we give some estimates on the quantity $\mathcal{R}(f)$.
\begin{proposition}[$d\ge 1$]\label{pr:R}
	 Recall $\mathcal{R}(f)$ from \eqref{R(f)}.
	Then,
	\[
				2^{1-2d} \int_0^\infty f\left([-r\,,r]^d\right)\frac{\d r}{r^d} \leq
				\mathcal{R}(f) \leq \int_0^\infty f\left([-r\,,r]^d\right)\frac{\d r}{r^d}.
	\]
\end{proposition}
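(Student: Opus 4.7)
The plan is to interpret the product $\prod_{j=1}^d (1-\cos(sz_j))/(sz_j)^2$ as the Fourier transform of an explicit nonnegative function supported on the cube $[-s,s]^d$, and then use the duality between $f$ and $\hat f$ together with tight pointwise bounds on that function.

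First I would recall that a direct computation gives
\[
\int_{-s}^{s}\frac{1}{s}\Bigl(1-\frac{|y|}{s}\Bigr)\e^{-izy}\,\d y
= \frac{2(1-\cos(sz))}{(sz)^2},
\]
so if one defines on $\R^d$
\[
\Psi_s(y) := 2^{-d}\prod_{j=1}^d \frac{1}{s}\Bigl(1-\frac{|y_j|}{s}\Bigr)_{+}\mathbf{1}_{[-s,s]}(y_j),
\]
then $\hat{\Psi}_s(z)=\prod_{j=1}^d (1-\cos(sz_j))/(sz_j)^2$. Since $\hat{\Psi}_s\in\mathscr{S}(\R^d)$, the standard duality $\int\hat{\Psi}_s\,\d\hat f = \int \widehat{\hat{\Psi}_s}\,\d f = (2\pi)^d\int \Psi_s\,\d f$ (using that $\Psi_s$ is even) yields, after Tonelli,
\[
\mathcal{R}(f)
= \frac{1}{\pi^d}\int_0^\infty\!\!\d s\int_{\R^d}\hat\Psi_s(z)\,\hat f(\d z)
= 2^{d}\int_0^\infty\!\!\d s\int_{\R^d}\Psi_s(y)\,f(\d y).
\]

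Next I would establish the two pointwise estimates
\[
2^{-2d}s^{-d}\mathbf{1}_{[-s/2,\,s/2]^d}(y)\ \le\ \Psi_s(y)\ \le\ 2^{-d}s^{-d}\mathbf{1}_{[-s,\,s]^d}(y),
\]
the upper bound being immediate from $0\le 1-|y_j|/s\le 1$ on $[-s,s]$, and the lower bound from the fact that $1-|y_j|/s\ge 1/2$ when $|y_j|\le s/2$. Substituting these into the displayed integral expression for $\mathcal{R}(f)$ gives
\[
\mathcal R(f)\le 2^{d}\!\int_0^\infty\!\! 2^{-d}s^{-d}f\bigl([-s,s]^d\bigr)\,\d s
=\int_0^\infty f\bigl([-r,r]^d\bigr)\frac{\d r}{r^d},
\]
while
\[
\mathcal R(f)\ge 2^{-d}\!\int_0^\infty s^{-d}f\bigl([-s/2,s/2]^d\bigr)\,\d s
= 2^{1-2d}\int_0^\infty f\bigl([-r,r]^d\bigr)\frac{\d r}{r^d},
\]
after the change of variable $r=s/2$ in the last step, which matches the lower bound claimed.

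The only slightly subtle point is the duality identity $\int \hat\Psi_s\,\d\hat f = (2\pi)^d\int\Psi_s\,\d f$ and the justification of Fubini; both are routine because $\hat\Psi_s\ge 0$ is Schwartz, $f$ is a nonnegative-definite tempered measure with Schwartz Fourier transform defined as a measure, and $\Psi_s$ is nonnegative and compactly supported. I do not expect a serious obstacle: the identity is just the definition of the Fourier transform of a tempered distribution once one checks the parity/convention, and Tonelli applies verbatim since every integrand is nonnegative.
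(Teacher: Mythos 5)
Your proposal is essentially the same argument the paper gives: you recognize that $\prod_j(1-\cos(sz_j))/(sz_j)^2$ is (up to constant) the Fourier transform of a rescaled triangle kernel $\Psi_s$, pass from $\hat f$ to $f$ by Parseval, and then sandwich $\Psi_s$ between indicator functions of two cubes. The paper writes the triangle kernel as $(I_1*\tilde I_1)(\cdot/r)$ and uses the bounds $2^{-d}\mathbf{1}_{[-1/2,1/2]^d}\le I_1*\tilde I_1\le\mathbf{1}_{[-1,1]^d}$ (their equation (3.17) citation), which is exactly your pointwise sandwich after the identification $\Psi_s(y)=2^{-d}s^{-d}(I_1*\tilde I_1)(y/s)$; the arithmetic in both directions, including the change of variable $r=s/2$, checks out.

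The one inaccuracy worth flagging is in your closing ``routine'' remark: $\hat\Psi_s$ is \emph{not} Schwartz. It is smooth and bounded, but each factor $(1-\cos(sz_j))/(sz_j)^2$ decays only like $z_j^{-2}$, so $\hat\Psi_s$ does not have rapid decrease. Likewise $\hat f$ is a tempered measure, not a Schwartz function. Consequently the identity $\int\hat\Psi_s\,\d\hat f=(2\pi)^d\int\Psi_s\,\d f$ does not follow directly from the definition of $\hat f$ on Schwartz test functions; one must pass to the non-smooth, compactly supported $\Psi_s$ by an approximation. The paper does exactly this: it mollifies, $\Psi_s\rightsquigarrow\psi_\varepsilon*\Psi_s$ (a genuine Schwartz function), applies Parseval there, and then uses dominated convergence on both sides, relying on $|\hat\psi_\varepsilon|\le1$ together with Dalang's condition to dominate $\int\hat\Psi_s\,\d\hat f$, and on uniform compact support and boundedness of $\psi_\varepsilon*\Psi_s$ for the $f$-side. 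Your conclusion is correct, but the step you call routine is precisely the one the paper spends the middle of the proof justifying, and the reason you offer for it (``$\hat\Psi_s$ is Schwartz'') is false.
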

\begin{proof}
         We observe that $\prod_{j=1}^d\varphi(z_jr)=2^{-d} r^{-d}[(I_1*\tilde{I}_1)(\bullet /r)]\widehat{\phantom{a}}(z)$
         for all $z\in\R^d$ and $r>0$.
         Hence we can write
	\begin{align*}
		\mathcal{R}(f) 
			&=\frac{1}{(2\pi)^d}\int_0^\infty \frac{\d r}{r^{d}}\int_{\R^d}\hat{f}(\d z)\
			\left(\widehat{\left(I_1*\tilde{I}_1\right)(\bullet /r)}\right)(z).
	\end{align*}
	
	Denote $\phi_r= \left(I_1*\tilde{I}_1\right)(\bullet /r)$ for every fixed $r>0$.
	Choose a non-negative smooth function $\psi$ with compact support such that $\int_{\R^d}\psi(x) \d x =1$. 
	For $0<\varepsilon <1$, define $\psi_{\varepsilon}(x)=\varepsilon^{-d}\psi(x/\varepsilon)$ for all $x\in \R^d$. 
	It is clear that $\psi_{\varepsilon}*\phi_r$ has compact support uniformly for all $0<\varepsilon<1$ 
	and $\sup_{0<\varepsilon<1}\sup_{x\in\R^d}\left(\psi_{\varepsilon}*\phi_r\right)(x)<\infty$.
	Moreover, we have $\sup_{0<\varepsilon<1}\sup_{x\in \R^d}|\hat{\psi}_{\varepsilon}(x)|\leq 1$ 
	and $\lim_{\varepsilon\to0}\hat{\psi}_{\varepsilon}(x)=1$ for all $x\in \R^d$. 
	Using these facts and that $f$ is locally integrable as a tempered distribution 
	and $\int_{\R^d}\hat{\phi}_r(x)\hat{f}(\d x)<\infty$ by Dalang's condition,  we obtain that 
	for every fixed $r>0$,
	\begin{align*}
	\int_{\R^d}\phi_r(x)f(\d x)&=\lim_{\varepsilon\to 0} \int_{\R^d}\left(\psi_{\varepsilon}*\phi_r\right)(x)f(\d x)
	= \lim_{\varepsilon\to 0} \frac{1}{(2\pi)^d}\int_{\R^d}
	\hat{\psi}_{\varepsilon}(x)\hat{\phi}_r(x)\hat{f}(\d x) \\
	&= \frac{1}{(2\pi)^d}\int_{\R^d}\hat{\phi}_r(x)\hat{f}(\d x) ,
	\end{align*}
	where the first and third equalities hold by dominated convergence theorem
	 and the second by  the definition of the Fourier transform and the property
	 $ \widehat{ \psi_\varepsilon *\phi_r}= \hat{\psi}_{\varepsilon}\hat{\phi}_r$.
	
	Therefore, 
		\begin{align*}
		\mathcal{R}(f) 
			&=\int_0^\infty \frac{\d r}{r^d}\int_{\R^d}
			\left(I_1*\tilde{I}_1\right)( z/r)
			f(\d z).
	\end{align*}
	Now appealing to the inequality  
	$2^{-d}\bm{1}_{[-1/2, 1/2]^d}\leq I_1*\tilde{I}_1\leq \bm{1}_{[-1, 1]^d}$ (see \cite[(3.17)]{CKNP}), we obtain
	\begin{align*}
		2^{1-2d} \int_0^\infty f\left([-r\,,r]^d\right)\frac{\d r}{r^d} \leq \mathcal{R}(f) 
			&\leq \int_0^\infty f\left([-r\,,r]^d\right)\frac{\d r}{r^d},
	\end{align*}
	which completes the proof.  
\end{proof}

Now we can prove Theorem \ref{th:AV:d>1}.

\begin{proof}[Proof of Theorem \ref{th:AV:d>1}]
         By Proposition \ref{pr:R}, it is clear that $\mathcal{R}(f)$ is strictly positive since we assume $f(\R^d)>0$ throughout 
         the paper. Let us proceed with the proof of \eqref{eq:AV:d>1}.

	Assume $\mathcal{R}(f)<\infty$ first.
	By Lemma \ref{lem:AV:1} and the dominated convergence theorem,
	\[
		\lim_{N\to\infty}NV_N^{(1)}(t)= \lim_{N\to\infty}N\int_{\R^d}\left( I_N*\tilde{I}_N\right)(x)\d x
		\int_0^t\d s\left( \bm{p}_{2(t-s)/t}*f\right)(sx/t) = t\mathcal{R}(f).
	\]
	In light of \eqref{V^{(2)}_N(t)}, it remains to prove that
	\begin{equation}\label{goal:AV:1}
		\lim_{N\to\infty}N\int_{\R^d}\left( I_N*\tilde{I}_N\right)(x)\,\d x
		\int_0^t\d s\int_{\R^d} f(\d y)\
		\bm{p}_{2s(t-s)/t}\left( y - \frac st x\right)\chi_s(y)=0.
	\end{equation}
	
	By the Cauchy--Schwarz inequality and stationarity, $\chi_t(x)\le\chi_t(0)$
	for all $t>0$ and $x\in\R^d$. Therefore,
 	\begin{align*}
		&\int_{\R^d}\left( I_N*\tilde{I}_N\right)(x)\,\d x
			\int_0^t\d s\int_{\R^d} f(\d y)\
			\bm{p}_{2s(t-s)/t}\left( y - \frac st x\right)\chi_s(y)\\
		&\hskip1in\le\int_0^t\chi_s(0)\,\d s \int_{\R^d}
			\left( I_N*\tilde{I}_N\right)(x)\,\d x\int_{\R^d} f(\d y)\
			\bm{p}_{2s(t-s)/t}\left( y - \frac st x\right),
	\end{align*}
	for every $N,t>0$.
	Repeat the computation of Lemma \ref{lem:AV:1} to find that,
	for every $N,t>0$,
 	\begin{align} \nonumber
		&\int_{\R^d}\left( I_N*\tilde{I}_N\right)(x)\,\d x
			\int_0^t\d s\int_{\R^d} f(\d y)\
			\bm{p}_{2s(t-s)/t}\left( y - \frac st x\right)\chi_s(y)\\  \label{BB4}
		&\hskip1in\leq\frac{t}{N\pi^d} \int_0^N\d s\, \chi_{st/N}(0)\int_{\R^d}\hat{f}(\d z)\,\e^{-t\|z\|^2(1-s/N)s/N} 
		\prod_{j=1}^d\varphi(z_js).
	\end{align}
	Since,
	 $\sup_{0<r\le t}\chi_{r}(0)<\infty$ and $\lim_{N\to\infty}\chi_{st/N}(0)=0$ for all $s>0$, [see 
	Lemma \ref{lem:chi->0}],
	the equality \eqref{BB4} and the dominated convergence theorem together
	imply \eqref{goal:AV:1}. This completes the proof of the theorem when $\mathcal{R}(f)<\infty$.

         We
	now assume that $\mathcal{R}(f)=\infty$ and aim to prove \eqref{eq:AV:d>1}. 	
	Thanks to \eqref{chi>0} and Lemma \ref{lem:chi},
	$\chi_t(x) \ge \int_0^t( \bm{p}_{2s(t-s)/t}*f)(sx/t)\,\d s$.
	Therefore, \eqref{V1+2} and  Lemma \ref{lem:AV:1}
	together imply that, for every $N,t>0$,
	\begin{align*}
		N\Var(\mathcal{S}_{N,t}) &\ge
                  \frac{t}{\pi^d} \int_0^N\d s\int_{\R^d}\hat{f}(\d z)\,\e^{-t\|z\|^2(1-s/N)s/N} 
		\prod_{j=1}^d\varphi(z_js).
			\end{align*}
	Now we apply Fatou's lemma to conclude $\liminf_{N\to\infty}N\Var(\mathcal{S}_{N,t})  \geq t\mathcal{R}(f)=\infty$. 
	 This implies \eqref{goal:AV:1}.
\end{proof}

In the following, we give some criteria for the finiteness of $\mathcal{R}(f)$.

\begin{lemma}\label{lem:f:1}
         If $d=1$, $\mathcal{R}(f)=\infty$.
         If $d \geq 2$, 
         $\mathcal{R}(f)<\infty$
	is  equivalent to one of the following:
	\begin{compactenum}
	\item  $\int_0^\infty r^{-d}f([-r\,,r]^d)\,\d r<\infty$;
	\item  $\int_{\R^d} \|x\|^{1-d}\,f(\d x)<\infty$;\
	\item  $\int_{\R^d} \| z \|^{-1} \hat{f}(\d z) <\infty$.
	\end{compactenum}
\end{lemma}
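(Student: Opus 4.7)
The plan is threefold: the $d=1$ claim follows from the lower bound in Proposition~\ref{pr:R}; the equivalence $\mathcal{R}(f)<\infty\Leftrightarrow$ (1) for $d\ge 2$ is a direct consequence of the same proposition; and the remaining equivalences (1) $\Leftrightarrow$ (2) and (1) $\Leftrightarrow$ (3) then follow from Fubini and a scaling/compactness argument respectively. For the $d=1$ statement, Proposition~\ref{pr:R} gives $\mathcal{R}(f)\ge \tfrac12\int_0^\infty r^{-1} f([-r,r])\,\d r$; since $f(\R)>0$, monotone convergence yields $f([-r,r])\uparrow f(\R)$, so $f([-r,r])\ge c>0$ for all sufficiently large $r$, and the divergence of $\int^\infty r^{-1}\,\d r$ forces $\mathcal{R}(f)=\infty$.

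For (1) $\Leftrightarrow$ (2) in $d\ge 2$, observe that $x\in[-r,r]^d \Leftrightarrow r\ge \|x\|_\infty$, so Fubini applied to the nonnegative integrand yields
\[
	\int_0^\infty \frac{f([-r,r]^d)}{r^d}\,\d r = \int_{\R^d} f(\d x) \int_{\|x\|_\infty}^\infty r^{-d}\,\d r = \frac{1}{d-1}\int_{\R^d} \|x\|_\infty^{1-d}\,f(\d x),
\]
and the comparison $\|x\|_\infty\asymp\|x\|$ converts this identity to the finiteness of $\int_{\R^d}\|x\|^{1-d}\,f(\d x)$.

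For (1) $\Leftrightarrow$ (3), applying Fubini to the definition of $\mathcal{R}(f)$ gives $\mathcal{R}(f)=\pi^{-d}\int_{\R^d} J(z)\,\hat f(\d z)$, where $J(z):=\int_0^\infty \prod_{j=1}^d \varphi(sz_j)\,\d s$. The substitution $s\mapsto s/\lambda$ shows $J$ is homogeneous of degree $-1$, whence $J(z)=\|z\|^{-1}J(z/\|z\|)$ for $z\ne 0$. I claim $J\asymp 1$ on the unit sphere $S^{d-1}$: continuity follows from dominated convergence (any $\omega\in S^{d-1}$ has some coordinate of size $\ge d^{-1/2}$, so the corresponding factor $\varphi(s\omega_{j^*})\lesssim s^{-2}$ at infinity provides a uniform $L^1((0,\infty))$ dominant, the remaining factors being bounded by $1/2$), and strict positivity is immediate since $\prod_j\varphi(sz_j)\ge c>0$ on a neighborhood of $s=0$. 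Compactness of $S^{d-1}$ then gives $J(z)\asymp\|z\|^{-1}$ on $\R^d\setminus\{0\}$, hence $\mathcal{R}(f)\asymp\int_{\R^d}\|z\|^{-1}\hat f(\d z)$, which is (3). The main obstacle is securing uniform integrable domination and strict positivity of $J$ on $S^{d-1}$ at points with partially vanishing coordinates, where the naive product bound $\prod_j\min(\tfrac12,2(sz_j)^{-2})$ degenerates; this is resolved by isolating the maximal coordinate, which on the unit sphere is always $\gtrsim d^{-1/2}$.
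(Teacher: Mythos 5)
Your proof is correct, and it handles the $d=1$ claim, the reduction $\mathcal{R}(f)<\infty\Leftrightarrow(1)$, and $(1)\Leftrightarrow(2)$ essentially the way the paper does (the paper sandwiches $[-r,r]^d$ between Euclidean balls, you use the exact $\|\cdot\|_\infty$ identity — same content). Where you genuinely diverge is in proving the equivalence with item (3). The paper does not touch $\mathcal{R}(f)$ directly here; it introduces the separate quantity $\int_0^\infty\d r\int_{\R^d}\hat f(\d z)\prod_j\sin^2(rz_j)/(rz_j)^2$, identifies it via a mollification/Parseval argument (parallel to the proof of Proposition~\ref{pr:R}, with the test function $\bm{1}_{[-1,1]^d}*\bm{1}_{[-1,1]^d}$) as comparable to $\int_0^\infty r^{-d}f([-r,r]^d)\,\d r$, and then invokes its own Lemma~\ref{lem:equiv} to show the $\sin^2$ kernel integral is $\asymp\|z\|^{-1}$, obtained there by explicit pointwise bounds of the product $\prod_j\sin^2(rz_j)/(rz_j)^2$ by $1\wedge(d^{-1/2}r\|z\|)^{-2}$. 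You instead apply Tonelli directly to the defining double integral for $\mathcal{R}(f)$ to get $\mathcal{R}(f)=\pi^{-d}\int J(z)\,\hat f(\d z)$ with $J(z)=\int_0^\infty\prod_j\varphi(sz_j)\,\d s$, observe that $J$ is $(-1)$-homogeneous, and then show $J\asymp 1$ on $S^{d-1}$ by continuity plus strict positivity plus compactness, your dominating function being supplied by the coordinate $\omega_{j^*}$ with $|\omega_{j^*}|\ge d^{-1/2}$. This avoids both the auxiliary mollification step and the detour through the $\sin^2$ kernel: it is shorter and stays closer to the definition of $\mathcal{R}(f)$. The trade-off is that your route uses a soft compactness argument in place of the paper's explicit inequality $\prod_j\varphi(sz_j)\lesssim 1\wedge(s\|z\|)^{-2}$ (the proof of Lemma~\ref{lem:equiv} is essentially that inequality, which you could also have quoted, modulo the harmless $\sin^2$ versus $(1-\cos)$ normalization). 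Both give the same comparability constants up to dimension; yours requires a moment's care at $z=0$ (when $\hat f\{0\}>0$ both sides are $+\infty$, so the finiteness equivalence is unaffected), which is worth a one-line remark.
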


\begin{proof}
         Let $d=1$. According to \eqref{f:finite}, there exists $R>0$ such that $f([-R, R])>0$. 
         Hence  by Proposition \ref{pr:R},
         \begin{align*}
         \mathcal{R}(f)\geq \frac{1}{2}\int_0^\infty r^{-1}f([-r\,,r])\,\d r \geq  f([-R\,,R]) \int_R^\infty r^{-1}\,\d r=\infty.
         \end{align*}

         Assume $d \geq 2$. By Proposition \ref{pr:R}, 
         we only need to prove that  Items 1, 2 and 3 are equivalent. 
 	Let $B_r=\{x\in\R^d:\, \|x\|\le r\}$ to see that
	$B_r\subseteq [-r\,,r]^d\subseteq B_{r\sqrt d}$, whence
	\[
		\int_0^\infty \frac{f(B_r)}{r^d}\,\d r\le
		\int_0^\infty f\left( [-r\,,r]^d\right)\frac{\d r}{r^d} \le(\sqrt d)^{d-1}\,
		\int_0^\infty \frac{f(B_r)}{r^d}\,\d r.
	\]
	This proves the equivalence of 1 and 2 since Fubini's theorem ensures that
	\[
		\int_0^\infty \frac{f(B_r)}{r^d}\,\d r = \frac 1{d-1}  \int_{\R^d}\frac{f(\d x)}{\|x\|^{d-1}}.
	\]
	
	Next, we prove the equivalence of 1 and 3. We observe that for alll $r>0$ and $z\in \R^d$,
	$$
	\prod_{j=1}^d\frac{\sin^2(rz_j)}{(rz_j)^2} =
	2^{-d}r^{-d} \left[\left(\bm{1}_{[-1, 1]^d}*\bm{1}_{[-1, 1]^d}\right)(\bullet/r)\right]\widehat{\phantom{a}}(z).
	$$
	Using the same approximation argument as in the proof of Proposition \ref{pr:R}, we have 
	\begin{align*}
	\int_0^{\infty}\d r\int_{\R^d}\hat{f}(\d z)\prod_{j=1}^{d}\frac{\sin^2(rz_j)}{(rz_j)^2}
	&=2^{-d}\int_0^{\infty}\frac{\d r}{r^d}\int_{\R^d}\hat{f}(\d z)
	\left[\left(\bm{1}_{[-1, 1]^d}*\bm{1}_{[-1, 1]^d}\right)(\bullet/r)\right]\widehat{\phantom{a}}(z)\\
	&=\pi^d\int_0^{\infty}\frac{\d r}{r^d}\int_{\R^d}f(\d z)
	\left(\bm{1}_{[-1, 1]^d}*\bm{1}_{[-1, 1]^d}\right)(z/r).
	\end{align*}
	Now we apply the inequality 
	$\bm{1}_{[-1, 1]^d} \leq \bm{1}_{[-1, 1]^d}*\bm{1}_{[-1, 1]^d} \leq 2^d\bm{1}_{[-2, 2]^d}$ and use Lemma \ref{lem:equiv} 
	to conclude the the equivalence of 1 and 3.
\end{proof}

\begin{lemma} \label{lem:equiv}
The following relation holds
\[
	\int_0^{\infty}\prod_{j=1}^{d}\frac{\sin^2(rz_j)}{(rz_j)^2} \,\d r \asymp \|z\|^{-1}.
\]
\end{lemma}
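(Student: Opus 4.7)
The plan is to factor out the scaling by $\|z\|$ and reduce the problem to a uniform two-sided bound on the unit sphere. Substituting $u=r\|z\|$ and writing $w:=z/\|z\|\in S^{d-1}$, the integral becomes
\[
    \int_0^{\infty}\prod_{j=1}^{d}\frac{\sin^2(rz_j)}{(rz_j)^2}\,\d r
    =\frac{J(w)}{\|z\|},\qquad
    J(w):=\int_0^{\infty}\prod_{j=1}^{d}\frac{\sin^2(uw_j)}{(uw_j)^2}\,\d u,
\]
with the usual convention $\sin^2(0)/0^2=1$ at the origin and at any coordinate $w_j$ that vanishes. Thus it suffices to show that $J(w)$ is bounded above and below by positive absolute constants, uniformly over $w\in S^{d-1}$.

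For the upper bound I would single out the largest coordinate of $w$. Relabeling so that $|w_1|=\max_{j}|w_j|$, we have $|w_1|\ge 1/\sqrt{d}$ since $\|w\|=1$. Bounding the remaining factors by $1$ and invoking the classical identity $\int_0^\infty \sin^2(v)/v^2\,\d v=\pi/2$, I obtain
\[
    J(w)\le \int_0^\infty \frac{\sin^2(uw_1)}{(uw_1)^2}\,\d u
    =\frac{\pi}{2|w_1|}\le \frac{\pi\sqrt d}{2}.
\]

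For the lower bound I would restrict the integration to $[0\,,1/|w_1|]$, on which $|uw_j|\le 1$ for every $j$. Since $x\mapsto \sin^2(x)/x^2$ is decreasing on $[0\,,\pi)$, each of the $d$ factors is then at least $\sin^2(1)$, and therefore
\[
    J(w)\ge \int_0^{1/|w_1|}\sin^{2d}(1)\,\d u=\frac{\sin^{2d}(1)}{|w_1|}\ge \sin^{2d}(1),
\]
because $|w_1|\le 1$. Combining the two bounds yields $J(w)\asymp 1$ uniformly on $S^{d-1}$, whence the lemma. The argument has no substantive obstacle: the only thing to be careful about is the degenerate case when some $w_j=0$, but the convention makes the corresponding factor equal to $1$ and neither the upper nor the lower estimate is affected.
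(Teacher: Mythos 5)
Your proof is correct and takes essentially the same route as the paper: both arguments pick out the maximal coordinate to get the upper bound and restrict the integration to the domain where every argument $|r z_j|$ is at most $1$ for the lower bound. The only cosmetic differences are that you rescale to the unit sphere first and use the exact identity $\int_0^\infty \sin^2(v)/v^2\,\d v=\pi/2$, whereas the paper bounds each sinc$^2$ factor by $1\wedge(r|z_j|)^{-2}$ and integrates the resulting piecewise function.
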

\begin{proof}
On one hand, we can write
\[
\prod_{j=1}^{d}\frac{\sin^2(rz_j)}{(rz_j)^2}  \le \prod_{j=1}^{d} (1\wedge (r|z_j|)^{-2}) \le  
 1\wedge (r  \max_{1\le j \le d}|z_j|)^{-2}  \le 1\wedge (  d^{-1/2}r \|z\|)^{-2},
\]
which implies
\[
\int_0^{\infty}\prod_{j=1}^{d}\frac{\sin^2(rz_j)}{(rz_j)^2}\,\d r  \le  \| z \|^{-1} \int_0^\infty  \d r\,  (1\wedge (   d^{-1} r^{-2})).
\]
On the other hand,
\[
\int_0^{\infty}\prod_{j=1}^{d}\frac{\sin^2(rz_j)}{(rz_j)^2}\,\d r  \ge \int_ { r\|z\| \le 1} \prod_{j=1}^{d}\frac{\sin^2(rz_j)}{(rz_j)^2} \,\d r
\ge   \| z \|^{-1} \inf_{0<|x| \le 1} \left( \frac { \sin x } {x} \right)^{2d}. \qedhere
\]
\end{proof}

\begin{remark}\label{Rieszinfty}
From item 2 of Lemma \ref{lem:f:1}, we deduce that $\mathcal{R}(f)=\infty$ if $f$ is given by 
a Riesz kernel that satisfies Dalang's condition, $\Upsilon(1)<\infty$;
i.e.,  $f(\d x)=\|x\|^{-\beta}\d x$ for some $0<\beta<d\wedge 2$.
\end{remark}

\subsection{Analysis in dimension $d=1$}

	Set $d=1$ and  repeat the computations  in the proof of Lemmas \ref{lem:chi} and \ref{lem:AV:1}  to see that
	\begin{align}\notag
		\Var(\mathcal{S}_{N,t})
		&=\frac{t}{\pi N}\int_0^{t}\frac{\d r}{r}\,
			\int_{-\infty}^\infty\d z\
			\varphi(z)
			\e^{ -  \frac{t(t-r)}{r}\frac{z^2}{N^2}}
			\hat{f}\left(\frac{tz}{Nr}\right)\\
		& \quad + \int_0^t\d s\int_{\R}f(\d y)\, \chi_s(y)
		\int_{\R^d}(I_N*\tilde{I}_N)(x)\bm{p}_{2s(t-s)/t}\left(y-\frac st x\right)\d x\nonumber\\
	        &:= V^{(1)}_N(t) +V^{(2)}_N(t),
	\end{align}
	where $\varphi$ and $\chi_s(y)$ are defined in \eqref{varphi} and \eqref{I:chi} respectively.

        \begin{lemma}\label{v2}
         For all $t>0$, $V^{(2)}_N(t)=o(\log(N)/N)$
         as $N\to\infty$.
         \end{lemma}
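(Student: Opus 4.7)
The plan is to establish the strictly sharper statement $V_N^{(2)}(t) = O(N^{-1})$, from which $V_N^{(2)}(t)=o(\log(N)/N)$ is immediate.

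First I would exploit the elementary bound $(I_N \ast \tilde{I}_N)(x) \le N^{-1}\bm{1}_{[-N,N]}(x)$, so that after the change of variables $u = sx/t$ the inner $x$-integral admits the $y$-uniform estimate
\[
\int_{\R}(I_N\ast\tilde{I}_N)(x)\,\bm{p}_{2s(t-s)/t}\!\left(y - \tfrac{s}{t}x\right)\d x \;\le\; \frac{1}{N}\cdot\frac{t}{s}\int_{\R}\bm{p}_{2s(t-s)/t}(y-u)\,\d u \;=\; \frac{t}{sN}.
\]
By stationarity and Cauchy--Schwarz, $0 \le \chi_s(y)\le \chi_s(0)$, while the $y$-integration then contributes only the constant factor $f(\R)<\infty$. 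Combining these ingredients reduces the lemma to the scalar estimate
\[
V_N^{(2)}(t)\;\le\; \frac{tf(\R)}{N}\int_0^t \frac{\chi_s(0)}{s}\,\d s,
\]
so it suffices to prove that the one-dimensional integral on the right is finite.

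The decisive remaining step is an a priori small-time bound of the form $\chi_s(0)\lesssim \sqrt{s}$ on $(0,t]$. To establish this I would set $x=0$ in Lemma~\ref{lem:chi}, bound $(\bm{p}_r\ast f)(0)\le f(\R)\bm{p}_r(0)$ in the forcing term, and use the crude majorization $\chi_r(y)\le M:=\sup_{r\le t}\chi_r(0)$ in the convolution term; the constant $M$ is finite by the moment bound of Theorem~\ref{th:U}. This yields
\[
\chi_s(0)\;\le\;(1+M)\,f(\R)\int_0^s \bm{p}_{2r(s-r)/s}(0)\,\d r\;=\;(1+M)\,f(\R)\,\tfrac{\sqrt{\pi s}}{2},
\]
using the elementary Beta-type identity $\int_0^s [4\pi r(s-r)/s]^{-1/2}\,\d r=\tfrac{1}{2}\sqrt{\pi s}$. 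Plugging back gives $\int_0^t \chi_s(0)/s\,\d s\lesssim \int_0^t s^{-1/2}\,\d s<\infty$, and hence $V_N^{(2)}(t)=O(N^{-1})=o(\log N/N)$.

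The main obstacle I anticipate is securing the rate $\chi_s(0)=O(\sqrt{s})$: the purely qualitative statement $\chi_s(0)\to 0$ given by Lemma~\ref{lem:chi->0} is not strong enough, and one genuinely needs to exploit the finiteness of $f(\R)$ (as opposed to merely Dalang's condition $\Upsilon(1)<\infty$) together with the explicit Brownian-bridge heat-kernel factor $\bm{p}_{2r(s-r)/s}$ in order to produce an integrable $s^{-1/2}$-type singularity after division by $s$. The remaining manipulations are routine applications of Fubini, change of variables, and direct Gaussian estimates.
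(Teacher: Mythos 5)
Your proof is correct, and it takes a genuinely different route from the paper that yields a strictly sharper conclusion. The paper's proof works with only the \emph{qualitative} fact $\chi_s(0)\to 0$ (Lemma~\ref{lem:chi->0}): it reduces $V^{(2)}_N(t)$ via Lemma~\ref{identity} to an integral in $s$, splits that integral at $s=tN^{-\varepsilon}$, bounds one piece using Lemma~A.1 of \cite{CKNP_c} together with $\chi_s(0)\to 0$, and bounds the other using the $\log$ estimate; sending $N\to\infty$ and then $\varepsilon\to 0$ gives precisely $o(\log N/N)$ and nothing better. You instead extract a \emph{quantitative} small-time rate $\chi_s(0)\lesssim\sqrt{s}$ from Lemma~\ref{lem:chi}, the a priori uniform bound $M=\sup_{r\le t}\chi_r(0)<\infty$ from Theorem~\ref{th:U}, the pointwise inequality $(\bm{p}_\sigma*f)(0)\le f(\R)\bm{p}_\sigma(0)$ (using $f(\R)<\infty$), and the explicit Beta-type integral $\int_0^s \bm{p}_{2r(s-r)/s}(0)\,\d r = \tfrac12\sqrt{\pi s}$ in $d=1$. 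This makes the singular density $\int_0^t \chi_s(0)\,\d s/s$ convergent and yields $V_N^{(2)}(t)=O(N^{-1})$, stronger than what the lemma states. One small caveat: your remark that the qualitative decay ``is not strong enough'' is not quite right as a statement about the lemma itself — the paper does prove $o(\log N/N)$ with only the qualitative decay — but it is true that the qualitative decay is insufficient if one wants the $O(1/N)$ rate, which is the route you took.
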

         \begin{proof}
         Choose and fix $\varepsilon >0$. Since $\sup_{y\in \R^d}\chi_s(y)=\chi_s(0)$, 
      we apply  Lemma  \ref{identity} and the change of variables $z\mapsto tz/(Ns)$ to see that
         \begin{align*}
         \frac{N}{\log N}V^{(2)}_N(t) 
         &\leq \frac{t}{\pi \log N}
        \int_{\R}\d z\
		\varphi(z)
         \int_0^t\d s\ \frac{\chi_s(0)}{s}\, \hat{f}\left(\frac{tz}{Ns}\right) \exp\left\{ - \frac{t(t-s)}{N^2 s}z^2\right\}
		\\
	 &\leq \frac{tf(\R)}{\pi \log N}
        \int_{\R}\d z\
		\varphi(z)
         \int_0^t\d s\ \frac{\chi_s(0)}{s} \exp\left\{ - \frac{t(t-s)}{N^2 s}z^2\right\}
		\\
	 &:= T_{2, 1} + T_{2, 2},
         \end{align*}
         where
        \begin{align*}
        T_{2, 1}&= \frac{tf(\R)}{\pi\log N}
        \int_{\R}\d z\
		\varphi(z)
          \int_0^t\frac{\d s}{s}\, \mathbf{1}_{\{s\leq tN^{-\varepsilon}\}}\chi_s(0)
		 \exp\left\{ - \frac{t(t-s)}{N^2 s}z^2\right\},
		 \\
         T_{2, 2}&= \frac{tf(\R)}{\pi\log N}
        \int_{\R}\d z\
		\varphi(z)
           \int_0^t\frac{\d s}{s}\, \mathbf{1}_{\{s> tN^{-\varepsilon}\}}\chi_s(0)
		 \exp\left\{ - \frac{t(t-s)}{N^2 s}z^2\right\}.
       \end{align*}
       By Lemma A.1 in Chen et al \cite{CKNP_c}, for all  $\varepsilon >0$,
       \begin{align*}
       T_{2, 1}& \leq \frac{7t\log_+(1/t) f(\R)}{\pi}\sup_{0\leq s\leq tN^{-\varepsilon}}\chi_s(0)
       \int_{\R}\d z\
		\varphi(z)\log_+(1/|z|). 
       \end{align*}
       Hence by Lemma \ref{lem:chi->0}, for all  $\varepsilon >0$,
       \begin{align}\label{T_{2,1}}
       \limsup_{N\to\infty}T_{2, 1}=0.
       \end{align}
       Similarly, using Theorem \ref{th:U} and the fact that  $\int_{\R} \varphi(z) \d z =\pi$, we deduce
       \begin{align}\label{T_{2,2}}
        T_{2, 2}& \leq tf(\R)\sup_{0\leq s\leq t}\chi_s(0) \frac{\log t - \log (tN^{-\varepsilon})}{\log N}.
        	 \end{align}
      Therefore, we conclude from \eqref{T_{2,1}} and  \eqref{T_{2,2}} that, for all  $\varepsilon >0$,
     \begin{align*}
     \limsup_{N\to\infty}\frac{N}{\log N}V^{(2)}_N(t) \le  tf(\R)\sup_{0\leq s\leq t}\chi_s(0)\varepsilon,
     \end{align*}
     which proves this lemma by letting $\varepsilon \to 0$.
              \end{proof}

\begin{proof}[Proof of Theorem \ref{th:AV:d=1}]
           In the case that $f=\delta_0$,  Item 1 of Theorem \ref{th:AV:d=1} was proved in Chen et al
	\cite{CKNP_c}. The same proof works for the more 
	general $f$ of the form $a\delta_0$. Therefore, we prove only
	 \eqref{lim(V):d=1} and Item 2.  
	 
	 We recall that, from Lemma \ref{identity}, 
	 \[
	  V^{(1)}_N(t)=  \frac t{N\pi}  \int_{\R} \d z\
	  \varphi(z) \int_0^t \frac {\d s} s \exp \left\{ -\frac { t(t-s) }{N^2s} z^2 \right\} \hat{f}  \left( \frac {tz}{Ns} \right).
	  \]
           Since $\hat{f}$ is maximized at $0$, 
          \begin{align*}
         V^{(1)}_N(t)&\leq \frac{t f(\R)}{\pi N}\int_{\R}\d z\
		\varphi(z)\int_0^t\, \frac{ \d s }{s}
		\exp\left\{ - \frac{t(t-s)}{N^2 s}z^2\right\}.
         \end{align*}
         Hence Lemma A.1 of \cite{CKNP_c} and Lemma \ref{v2} imply the third inequality in \eqref{lim(V):d=1}.

         On the other hand, using change of variables $s=tr N^{-2}$,
         \begin{align} \notag
         V^{(1)}_N(t)&= \frac{t}{\pi N}\int_0^{N^2}\frac{\d r}{r}\,\int_{\R}\d z\
         \varphi(z)
		\exp\left\{ -  tz^2\left[\frac{1-(r/N^2)}{r}\right]\right\}\hat{f}\left( \frac{zN}{r}\right)\\
	&=\frac{t}{\pi N} ( T_{1,1} + T_{1,2}),  \label{A1}
        \end{align}
        where
       \begin{align*}
	T_{1,1} &:= \int_0^1\frac{\d r}{r}\,		\int_{\R} \d z\
		\varphi(z)
		\exp\left\{ -  tz^2\left[\frac{1-(r/N^2)}{r}\right]\right\}\hat{f}\left( \frac{zN}{r}\right),\\
	T_{1,2} &:= \int_1^{N^2}\frac{\d r}{r}\,		\int_{\R} \d z\
		\varphi(z)
		\exp\left\{ -  tz^2\left[\frac{1-(r/N^2)}{r}\right]\right\}\hat{f}\left( \frac{zN}{r}\right).
        \end{align*}
        It is easy to see that, for all $N\ge1$ and for all $a>0$
       \begin{align*}
	\int_0^1\exp\left\{ - a\left[\frac{1-(r/N^2)}{r}\right]\right\}\frac{\d r}{r}
		&\le\int_0^1\exp\left\{ - a\left[\frac{1-r}{r}\right]\right\}\frac{\d r}{r}
	&=\e^a\int_a^\infty\frac{\e^{-s}\,\d s}{s}\le \log_+(\e/a),
        \end{align*}
        where $\log_+(x) = \log(\e +x)$ for $x>0$.
        Since $\sup\hat{f}=\hat{f}(0)=f(\R)$, it follows that
        \[
	T_{1,1} \le f(\R)
	\int_{\R} \varphi(z)\log_+\left(\frac{\e}{tz^2}\right)\d z
	<\infty.
        \]
        Therefore, 
       \begin{equation} \label{A2}
	\limsup_{N\to\infty} \frac{T_{1,1}}{\log N}=0.
        \end{equation}
         So all of the asymptotic behavior of $V^{(1)}_N(t)$ is captured via the asymptotic
         behavior of $T_{1,2}$. Now
         \begin{align*}
	T_{1,2} &= \int_{-\infty}^\infty\d z\
		\varphi(z)\int_{1/N^2}^1\frac{\d s}{s}
		\exp\left\{ -  tz^2\left[\frac{1-s}{sN^2}\right]\right\}\hat{f}\left( \frac{z}{sN}\right)\\
	&= T_{1,2,1} +T_{1,2,2},
         \end{align*}
        where
        \begin{align*}
	T_{1,2,1} &:= \int_{-\log N}^{\log N}\d z\
         \varphi(z)\int_{1/N^2}^1\frac{\d s}{s}
		\exp\left\{ -  tz^2\left[\frac{1-s}{sN^2}\right]\right\}\hat{f}\left( \frac{z}{sN}\right),\\
	T_{1,2,2} &:= \int_{|z|>\log N}\d z\
		\varphi(z)\int_{1/N^2}^1\frac{\d s}{s}
		\exp\left\{ -  \frac{tz^2}{N^2}\left[\frac{1-s}{s}\right]\right\}\hat{f}\left( \frac{z}{sN}\right).
       \end{align*}
      Now,
      \begin{equation} \label{A3}
	0\le T_{1,2,2}\le f(\R)\log(N^2)\int_{|z|>\log N}\d z\
	\left(\frac{1-\cos z}{z^2}\right) = o(\log N).
      \end{equation}
       So all of the asymptotic behavior of $V^{(1)}_N(t)$ is captured via the asymptotic
       behavior of $T_{1,2,1}$. To study that term, we rescale one more time 
       [but slightly differently from before] in order to see that
       \begin{align*}
	T_{1,2,1} & = \int_{-\log N}^{\log N}\d z\
		\varphi(z)\int_{1/N}^N\frac{\d r}{r}
		\exp\left\{ -  \frac{tz^2}{N}\left[\frac1r - \frac1N\right]\right\}
		\hat{f}\left( \frac{z}{r}\right)\\
	&\ge \exp\left\{ -  \frac{t|\log N|^2}{N} \right\}\int_{-\log N}^{\log N}\d z\
		\varphi(z)\int_1^N\frac{\d r}{r}
		\hat{f}\left( \frac{z}{r}\right).
       \end{align*}
    Hence,
       \begin{align*}
	T_{1,2,1} & \ge(1+o(1))\int_{-\log N}^{\log N}\d z\
		\left(\frac{1-\cos z}{z^2}\right)\int_{(\log N)^2}^N\frac{\d r}{r}
		\hat{f}\left( \frac{z}{r}\right)\\
	&= (f(\R)+o(1))\int_{-\log N}^{\log N}\d z\
		\left(\frac{1-\cos z}{z^2}\right)\int_{(\log N)^2}^N\frac{\d r}{r}\\
	&=(f(\R)+o(1))\int_{-\infty}^\infty
		\left(\frac{1-\cos z}{z^2}\right)\d z\ \log N\\
	&=(\pi f(\R)+o(1)) \log N.
        \end{align*}
       This proves that  
       \begin{align}\label{lower}
	 \pi f(\R)\le \liminf_{N\to\infty} \frac{1}{\log N} T_{1,2,1}.
        \end{align}
       Therefore, Lemma \ref{v2} and the relations \eqref{A1}, \eqref{A2}, \eqref{A3} and  \eqref{lower},  prove the first inequality in  \eqref{lim(V):d=1}.

        It remains to prove Item 2. We assume
	that $\hat{f}$ vanishes at infinity. Combining Lemma \ref{v2} and the above arguments,  the problem is reduced 
	to the following:
	\begin{equation}\label{goal:Rajchman}
		\limsup_{N\to\infty} \frac{T_{1,2,1}}{\log N}\le\pi f(\R).
	\end{equation}
	With this in mind,
	let us  recall from the definition of $T_{1,2,1}$ that
	\[
		T_{1,2,1} \le  \int_{-\log N}^{\log N}\varphi(z)\,\d z
		\int_{1/N}^N\frac{\d r}{r}\
		\hat{f}(z/r).
	\]
	Because
	\[
		\int_{-\log N}^{\log N}\varphi(z)\,\d z\
		\int_{(\log N)^2}^N\frac{\d r}{r}\
		\hat{f}(z/r) \le\pi f(\R)\int_{(\log N)^2}^N\frac{\d r}{r}
		\sim\pi f(\R)\log N,
	\]
	as $N\to\infty$, this and symmetry reduce our goal \eqref{goal:Rajchman} to proving that,
	when $\hat f$ vanishes at infinity,
	\[
		\int_0^{\log N}\varphi(z)\,\d z\
		\int_{1/N}^{(\log N)^2}\frac{\d r}{r}\ 
		\hat{f}(z/r)= o(\log N)
		\qquad\text{as $N\to\infty$}.
	\]
	Since $\int_{1/\sqrt{\log N}}^{(\log N)^2} r^{-1}\,\d r=o(\log N)$, we can further reduce our goal to 
	proving the following: When $\hat f$ vanishes at infinity,
	\[
		\int_0^{\log N}\varphi(z)\,\d z\
		\int_{1/N}^{1/\sqrt{\log N}} \frac{\d r}{r}\ 
		\hat{f}(z/r)= o(\log N)
		\qquad\text{as $N\to\infty$}.
	\]
	But this is so since: (1)
	\begin{equation}\label{garb}
		\int_{1/(\log N)^{1/4}}^{\log N}\varphi(z)\,\d z\
		\int_{1/N}^{1/\sqrt{\log N}} \frac{\d r}{r}\ 
		\hat{f}(z/r) \le \pi\sup_{w\ge(\log N)^{1/4}}\hat{f}(w)\log N=o(\log N);
	\end{equation}
	and (2) because $\varphi\le1$,
	\[
		\int_0^{1/(\log N)^{1/4}}\varphi(z)\,\d z\
		\int_{1/N}^{1/\sqrt{\log N}} \frac{\d r}{r}\ 
		\hat{f}(z/r) \le \frac{f(\R)}{(\log N)^{1/4}}\int_{1/N}^{1/\sqrt{\log N}} \frac{\d r}{r}
		=o(\log N).
	\]
	This proves item 2.
\end{proof}

\subsection{Analysis of Riesz kernel case}

We now aim to prove Theorem \ref{Riesz}.           
Assume $f(\d x)=\|x\|^{-\beta}\d x$  
 and $\hat{f}(\d x)=\kappa_{\beta, d}\,\|x\|^{\beta-d}\d x$, 
 where $0<\beta< 2\wedge d$ and $\kappa_{\beta, d}$
           is a positive constant depending on $\beta$ and $d$.
In this case, we first provide another supporting lemma on the behavior of $\chi_t(x)$ as $x\to\infty$. 

\begin{lemma}\label{chi:infty}
           Recall \eqref{I:chi}.
          For all $t>0$, $\lim_{x\to\infty}\chi_t(x)=0$.
\end{lemma}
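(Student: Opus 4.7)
The plan is to revisit the identity of Lemma \ref{lem:chi} and exploit the scale-invariance of the Riesz kernel. I would split
\[
\chi_t(x) = T_1(x) + T_2(x),
\]
where $T_1(x) := \int_0^t \bigl(\bm{p}_{2s(t-s)/t} * f\bigr)(sx/t)\,\d s$ is the ``inhomogeneous'' term and $T_2(x)$ is the remaining double integral. For the quadratic part, Cauchy--Schwarz together with stationarity (Theorem \ref{th:U}) gives $|\chi_s(y)| \le \chi_s(0)$, and $\sup_{s\in(0,t]}\chi_s(0)<\infty$ follows from the moment bound \eqref{moment:U}. Symmetry of $f$ makes the inner integral $\int f(\d y)\,\bm{p}_r(y - sx/t)$ equal to $(\bm{p}_r*f)(sx/t)$, so
\[
|T_2(x)| \le \Bigl(\sup_{s\in(0,t]}\chi_s(0)\Bigr)\,T_1(x).
\]
The problem thus reduces to showing $T_1(x)\to 0$ as $\|x\|\to\infty$.

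For that, my first step would be to establish the scaling identity
\[
(\bm{p}_r * f)(z) = r^{-\beta/2}(\bm{p}_1 * f)(z/\sqrt r) \qquad (r>0,\ z\in\R^d),
\]
obtained from the change of variables $w = \sqrt r\, u$ in the convolution against $\|w\|^{-\beta}\d w$. Since $\beta<d$, the function $\bm{p}_1*f$ is everywhere finite, and by the Fourier representation \eqref{Fourier} applied with $\hat f(\d y) = \kappa_{\beta,d}\|y\|^{\beta-d}\d y$, it is (up to constants) the Fourier transform of the integrable function $\e^{-\|y\|^2/2}\|y\|^{\beta-d}$; hence it is bounded by some $M<\infty$ and vanishes at infinity by the Riemann--Lebesgue lemma.

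Applying the scaling identity with $r = 2s(t-s)/t$ and $z = sx/t$, the integrand of $T_1(x)$ is bounded by $M\bigl(2s(t-s)/t\bigr)^{-\beta/2}$, which is integrable over $s\in(0,t)$ precisely because $\beta<2$. For each fixed $s\in(0,t)$ the argument $\sqrt s\, x / \sqrt{2t(t-s)}$ of $\bm{p}_1 * f$ tends to infinity as $\|x\|\to\infty$, so Riemann--Lebesgue drives the integrand to zero pointwise in $s$. Dominated convergence will then yield $T_1(x)\to 0$, and the lemma follows.

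The only real obstacle is producing an $s$-integrable majorant uniform in $x$ near the endpoints $s = 0$ and $s = t$; this is exactly where the constraint $\beta < 2$ intervenes, together with the homogeneity of the Riesz kernel that makes the scaling identity available.
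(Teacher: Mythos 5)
Your proof is correct, and it follows a genuinely different route to the key inequality than the paper's. Where the paper bounds $|\chi_t(x)|$ via the Poincar\'e inequality \eqref{Poincare:Cov} and the explicit Malliavin-derivative estimate of Proposition \ref{pr:Du}, you instead bootstrap the recursive covariance identity of Lemma \ref{lem:chi}: since $\chi_s\ge 0$ and $\chi_s(y)\le\chi_s(0)$ by Cauchy--Schwarz and stationarity, the feedback term $T_2$ is dominated by $\bigl(\sup_{s\in(0,t]}\chi_s(0)\bigr)\,T_1(x)$, and the uniform bound on $\chi_s(0)$ comes from \eqref{moment:U}. That avoids the Malliavin-calculus machinery entirely and lands you at the same reduction the paper reaches, namely that $\chi_t(x)\lesssim T_1(x)=\int_0^t(\bm p_{2s(t-s)/t}*f)(sx/t)\,\d s$. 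For the decay of $T_1$, the paper plugs in \eqref{Fourier} and applies dominated convergence and Riemann--Lebesgue directly on the $(s,z)$-integral with the kernel $\|z\|^{\beta-d}\e^{-s(t-s)\|z\|^2/t}$; you factor the same integral through the scaling identity $(\bm p_r*f)(z)=r^{-\beta/2}(\bm p_1*f)(z/\sqrt r)$, use Riemann--Lebesgue on the single bounded function $\bm p_1*f$ (finite because $\beta<d$, vanishing at infinity because $\e^{-\|y\|^2/2}\|y\|^{\beta-d}$ is integrable by $\beta>0$), and then dominate in $s$ using $\beta<2$. The two are mathematically equivalent in the end --- both hinge on Riemann--Lebesgue plus dominated convergence --- but your scaling version makes the reason the integrand is $s$-integrable near the endpoints slightly more transparent, at the cost of being specific to the Riesz structure (which is fine here, since Lemma \ref{chi:infty} is stated only in the Riesz setting).
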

\begin{proof}
          By the Poincar\'{e} inequality \eqref{Poincare:Cov}, 
          \begin{align*}
          |\chi_t(x)|&= |\Cov(U(t\,, 0)\,, U(t\,, x))|\\
          &\leq  \int_0^t\d s\int_{\R^d}f(\d y)\int_{\R^d}\d y' \,
          \|D_{s, y'}U(t\,, 0)\|_2          \|D_{s, y+y'}U(t\,, x)\|_2 \\
          &\leq    C_{t, 2}^2        \int_0^t\d s \int_{\R^d}f(\d y)\int_{\R^d}\d y' \,
          \bm{p}_{s(t-s)/s}\left(y'\right)\bm{p}_{s(t-s)/s}\left(y'+y-\frac{s}{t}x\right)\\
          &=    C_{t, 2}^2        \int_0^t\d s\int_{\R^{2}}f(\d y) \,
            \bm{p}_{2s(t-s)/s}\left(y-\frac{s}{t}x\right)= \int_0^t \d s \left(\bm{p}_{2s(t-s)/s}*f\right)\left(\frac{s}{t}x\right),
          \end{align*}
          where in the second inequality we use Proposition \ref{pr:Du} and in the first equality we use semigroup property.
          Now we apply \eqref{Fourier} to see that 
	\begin{align*}
		|\chi_t(x)|&\leq C_{t, 2}^2\int_0^t \d s \int_{\R^d}\hat{f}(\d z)\
		\exp\left\{ -\frac{s(t-s)\|z\|^2}{t} + i\left(\frac{s}{t}\right)z\cdot x\right\}\\
          &= \kappa_{\beta,d}C_{t, 2}^2\int_0^t \d s \int_{\R^d}\d z\ \|z\|^{\beta-d} 
		\exp\left\{ -\frac{s(t-s)\|z\|^2}{t} + i\left(\frac{s}{t}\right)z\cdot x\right\}.
          \end{align*}
          Since $\int_0^t \d s \int_{\R^d}\d z\, \|z\|^{\beta-d} \exp\{-s(t-s)\|z\|^2/t\}<\infty$, 
          the dominated convergence theorem and the Riemann-Lebesgue lemma together
          imply that $\lim_{x\to\infty}\chi_t(x)=0$.
\end{proof}

\begin{proof}[Proof of Theorem \ref{Riesz} part 1: $0<\beta<1$]
	Let $\psi(x):= \prod_{i=1}^d (1- |x_i|)$ for all $x\in \R^d$. We observe that 
       $(I_N*\tilde{I})(x) = N^{-d}\psi(x/N)\bm{1}_{[-N, N]^d}(x)$
        for all $x\in \R^d$. Recall \eqref{V^{(1)}_N(t)} and \eqref{V^{(2)}_N(t)}. 
        Since $f(\d x)=\|x\|^{-\beta}\d x$, 
        we can write
\begin{align*}
V^{(1)}_N(t)& = \frac 1{N^d}  \int_{[-N,N]^d} \d x\,  \psi(x/N) \int_0^t \d s  \int_{\R^d} \d y\,   \| y\|^{-\beta} \pmb{p}_{2s(t-s)/t} \left(y-\frac st x\right),\\
V^{(2)}_N(t) &= \frac 1{N^d}  \int_{[-N,N]^d} \d x\, \psi(x/N)  \int_0^t \d s  \int_{\R^d} \d y \,  \| y\|^{-\beta} \pmb{p}_{2s(t-s)/t} \left(y-\frac st x\right)
\chi_{s}(y).
\end{align*}
The term  $V^{(1)}_N(t)$ can be expressed as
	\begin{align*}
		V^{(1)}_N(t) &= \frac 1{N^d}  \int_{[-N,N]^d} \d x\
			\psi(x/N) \int_0^t \d s \,   \E\left(\left\| \sqrt{ \frac{2s(t-s)}{t}} Z - \frac st x  
			\right\| ^{-\beta}  \right)\\
		&=   N^{-\beta} \int_{[-1,1]^d} \d z\ \psi(z) \int_0^t \d s  \  
			\E\left(\left\|\frac 1N \sqrt{ \frac{2s(t-s)}{t}} Z - \frac st z  \right\| ^{-\beta}  \right),
\end{align*}
where we have made the change of variable $x=Nz$ and  $Z$ denotes  a $d$-dimensional standard normal random variable.  
An easy exercise shows that $\lim_{N\to\infty}\left(\bm{p}_{1/N}*\|\cdot\|^{-\beta}\right)(x) = \|x\|^{-\beta}$ for all $x\in \R^d\setminus \{0\}$, which implies that 
for any $s\in (0\,,t]$ and
$z\in \R^d \setminus\{ 0\}$,
\[
	\lim_{N\rightarrow \infty} \E\left(\left\|\frac 1N \sqrt{ 2s(t-s)/t} Z - \frac st z
	\right\| ^{-\beta}  \right) = t^\beta s^{-\beta} \| z\|^{-\beta}.
\]
Moveover,  according to  
Lemma 3.1 of \cite{HNVZ2019},
\[
	\sup_{N\ge 1} \E\left( \left\|\frac 1N \sqrt{ \frac{2s(t-s)}{t}} Z - \frac st z  \right\| ^{-\beta}  
	\right) \le C t^{\beta} s^{-\beta} \| z\|^{-\beta}.
\]
Because $\beta<1$, the dominated convergence theorem implies that
\[
\lim_{N\rightarrow \infty}  N^\beta V^{(1)}_N(t)=\int_{[-1,1]^d} \d z\, \psi(z) \int_0^t \d s  \
 t^\beta s^{-\beta} \| z\|^{-\beta} <\infty.
 \]
 Finally to complete the proof of \eqref{2var:beta<1} it suffices to show that
 \begin{equation} \label{ecu1}
 \lim_{N\rightarrow \infty}  N^\beta V^{(2)}_N(t)=0.
 \end{equation}
 Using the same arguments as before and recalling $\chi_s(y)$ in \eqref{I:chi}, we can write
\begin{align*}
	N^\beta V^{(2)}_N(t)  &\le  \int_{[-1,1]^d} \d z\, \psi(z) \int_0^t \d s \
 	\E \left[\chi_s\left( \sqrt{ \frac{2s(t-s)}{t}} Z  - \frac st Nz \right) 
	\left\|\frac 1N \sqrt{ 2s(t-s)/t} Z - \frac st z  \right\| ^{-\beta}  \right].
 \end{align*}
Thus, we can conclude \eqref{ecu1} from the fact  that  
$\chi_s(\sqrt{ 2s(t-s)/t} Z  - (sNz)/t)$ is uniformly bounded [Theorem \ref{th:U}] 
and converges to zero almost surely as $N\to \infty$ [Lemma \ref{chi:infty}].
\end{proof}

Before we move on to proving part 2 and part 3, we express the quantities $V^{(1)}_N(t)$ and $V^{(2)}_N(t)$ 
using $\hat{f}(\d x)= \kappa_{\beta, d} \|x\|^{\beta-d}\d x$. In fact, from \eqref{V^{(1)}_N(t)} and using the identity \eqref{equal},
we see that
\begin{align}
V^{(1)}_N(t)&=   \frac{\kappa_{\beta, d}}{\pi^d}\int_{0}^t\d s \int_{\R^d}\e^{-s(t-s)\|z\|^2/t} \prod_{j=1}^d\frac{1-\cos(Nz_js/t)}{(Nz_js/t)^2}
   \|z\|^{\beta-d}\d z\nonumber\\
   &=  \frac{\kappa_{\beta, d}}{\pi^d N^{\beta}}\int_0^t\d s\,\frac{t^\beta}{s^\beta}
		\int_{\R^d}\d z\ \|z\|^{\beta-d}
		\prod_{j=1}^d\varphi(z_j)
		\exp\left\{ - \frac{t(t-s)}{N^2 s}\|z\|^2\right\} \nonumber\\
    &=  \frac{t\,\kappa_{\beta, d}}{\pi^d N^{\beta}} \int_{\R^d}\d z\ \|z\|^{\beta-d}
		\prod_{j=1}^d\varphi(z_j)
		\int_{0}^{\infty}\d r\, (1+r)^{\beta-2} \exp\left\{- \frac{rt\|z\|^2}{N^2}\right\}
		\label{V4}\\
	&=  \frac{t^{2-\beta}\,\kappa_{\beta, d}}{\pi^d N^{2-\beta}} \int_{\R^d}\d z\ \|z\|^{2-\beta-d}
		\prod_{j=1}^d\varphi(z_j)
		\int_{0}^{\infty}\d r\, \left(\frac{t\|z\|^2}{N^2}+r\right)^{\beta-2} \e^{-r},  \label{V_N1}
\end{align}
where $\varphi$ is defined in \eqref{varphi} and we use change of variables in the last three equalities. 
Similarly, using change of variables and \eqref{V^{(2)}_N(t)}
           \begin{align}
     V^{(2)}_N(t) 
       &\leq  \frac{t\,\kappa_{\beta, d}}{\pi^d N^{\beta}} \int_{\R^d}\d z\ \|z\|^{\beta-d}
		\prod_{j=1}^d\varphi(z_j)
		\int_{0}^{\infty}\d r\, (1+r)^{\beta-2} \e^{-r\cdot \frac{t\|z\|^2}{N^2}}\chi_{t(1+r)^{-1}}(0)
		\label{V_N12}\\
	&=  \frac{t^{2-\beta}\,\kappa_{\beta, d}}{\pi^d N^{2-\beta}} \int_{\R^d}\d z\ \|z\|^{2-\beta-d}
		\prod_{j=1}^d\varphi(z_j)
		\int_{0}^{\infty}\d r\, \left(\frac{t\|z\|^2}{N^2}+r\right)^{\beta-2} \e^{-r}
		\chi_{t(1+rN^2/(t\|z\|^2))^{-1}}(0). \nonumber
	\end{align}

\begin{proof}[Proof of Theorem \ref{Riesz} part 2: $\beta=1$]
Using \eqref{V_N1} with $\beta=1$, we have
     \begin{align}\label{N/log N}
     \frac{N}{\log N}V^{(1)}_N(t) 
	&=  \frac{\kappa_{1, d}}{\pi^d } \int_{\R^d}\d z\ \|z\|^{1-d}
		\prod_{j=1}^d\varphi(z_j)
		\frac{t}{\log N}\int_{0}^{\infty}\d r\, \left(\frac{t\|z\|^2}{N^2}+r\right)^{-1} \e^{-r}.
	\end{align}
According to Lemma A.1 of Chen et al \cite{CKNP_c}, we have
	\begin{align}\label{A.1}
	\frac{t}{\log N}\int_{0}^{\infty}\d r\, \left(\frac{t\|z\|^2}{N^2}+r\right)^{-1} \e^{-r}&=  \frac{t}{\log N}\int_0^t \exp\left( - \frac{(t - s)t}{s}\cdot \frac{\|z\|^2}{N^2}\right) \nonumber
	\,\frac{\d s}{s}\\
	&\leq  7t\log_+(1/t) \log_+(1/\|z\|) \quad \text{for all $N\geq \e$},
	\end{align}
    where $\log_+(a)=\log(\e +a)$ for $a>0$, and 
    \begin{align}\label{A.1:lim}
	\lim_{N\to\infty}\frac{t}{\log N}\int_{0}^{\infty}\d r\, \left(\frac{t\|z\|^2}{N^2}+r\right)^{-1} \e^{-r} =2t, 
	\qquad \text{for all $z\in \R^d\setminus \{0\}$}.
   \end{align}
   Therefore, since  
   $\int_{\R^d} \|z\|^{1-d}\prod_{j=1}^d\varphi(z_j)  \log_+(1/\|z\|) \d z<\infty$,     
   by \eqref{N/log N}--\eqref{A.1:lim} and  the dominated convergence theorem, 
   \begin{align*}
    \lim_{N\to\infty} \frac{N}{\log N}V^{(1)}_N(t) =  \frac{2t\, \kappa_{1, d}}{\pi^d }
     \int_{\R^d} \|z\|^{1-d}\prod_{j=1}^d\varphi(z_j)\d z.
   \end{align*}
      In light of \eqref{var:beta=1},  it suffices to prove 
       \begin{align}
          \lim_{N\to\infty}\frac{N}{\log N}V^{(2)}_N(t)&=0. \label{beta=113}
       \end{align}
  	Similarly, letting $\beta=1$ in \eqref{V_N12}, 
    \begin{align}\label{V2N/log N}
     \frac{N}{\log N}V^{(2)}_N(t) 
	&\leq  \frac{\kappa_{1, d}}{\pi^d } \int_{\R^d}\d z\ \|z\|^{1-d}
		\prod_{j=1}^d\varphi(z_j)
 \frac{t}{\log N}\int_{0}^{\infty}\d r\, \left(\frac{t\|z\|^2}{N^2}+r\right)^{-1} \e^{-r}
		\chi_{t(1+rN^2/(t\|z\|^2))^{-1}}(0).
	\end{align}
        Choose and fix $0<\varepsilon<2$. 
       We see from \eqref{V2N/log N} and \eqref{A.1} that 
     \begin{align}
     \frac{N}{\log N}V^{(2)}_N(t) 
	&\leq  \frac{\kappa_{1, d}}{\pi^d }\sup_{0\leq s\leq t}\chi_s(0)
	 \int_{\R^d}\d z\ \|z\|^{1-d}
		\prod_{j=1}^d\varphi(z_j)
		\frac{t}{\log N}\int_{0}^{N^{-\varepsilon}}\d r\, \left(\frac{t\|z\|^2}{N^2}+r\right)^{-1} \nonumber\\
	& \quad +   \frac{\kappa_{1, d}}{\pi^d } 7t\log_+(1/t) \int_{\R^d}\d z\ \|z\|^{1-d}
		\prod_{j=1}^d\varphi(z_j) \log_+(1/\|z\|) 
		\sup_{0\leq s\leq t^2\|z\|^2/N^{2-\varepsilon}}\chi_s(0)\nonumber
	\end{align}
	Letting $N\to\infty$ and using Lemma \ref{lem:chi->0} and  the dominated convergence theorem, we conclude that  
	for every $0<\varepsilon<2$, 
	 \begin{align*}
    \limsup_{N\to\infty} \frac{N}{\log N}V^{(2)}_N(t)&\leq  \frac{t(2-\varepsilon)\kappa_{1, d}}{\pi^d }\sup_{0\leq s\leq t}\chi_s(0)
	 \int_{\R^d}\d z\ \|z\|^{1-d}
		\prod_{j=1}^d\varphi(z_j).
	\end{align*}
	Since the choice of  $0<\varepsilon<2$ is arbitrary, we  let $\varepsilon\to 2$ to obtain \eqref{beta=113}. 
	This proves \eqref{var:beta=1}.
\end{proof}

\begin{proof}[Proof of Theorem \ref{Riesz} part 3: $1<\beta<2$]
    Recall \eqref{V_N1}. Under the condition $1<\beta<2$, 
      we    have $ \int_{\R^d}\|z\|^{2-\beta-d}\prod_{j=1}^d\varphi(z_j)\d z <\infty$ 
      and $ \int_{0}^{\infty} r^{\beta-2} \e^{-r}\d r <\infty$. Hence by the  dominated convergence theorem,  
      \begin{align}
       \lim_{N\to\infty}N^{2-\beta}V^{(1)}_N(t)=  \frac{t^{2-\beta}\,\kappa_{\beta, d}}{\pi^d}
        \int_{\R^d}\|z\|^{2-\beta-d}\prod_{j=1}^d\varphi(z_j)\d z \int_{0}^{\infty} r^{\beta-2} \e^{-r}\d r. \label{beta>11}
      \end{align} 
      Moreover, from  \eqref{V_N12}, Lemma \ref{lem:chi->0} and the dominated convergence theorem
	  \begin{align*}
      &\limsup_{N\to\infty}N^{2-\beta}V^{(2)}_N(t)  \\
	&\quad \leq  \frac{t^{2-\beta}\,\kappa_{\beta, d}}{\pi^d } \int_{\R^d}\d z\ \|z\|^{2-\beta-d}
		\prod_{j=1}^d\varphi(z_j)
		\int_{0}^{\infty}\d r\, \e^{-r} \lim_{N\to\infty}
		\left(\frac{t\|z\|^2}{N^2}+r\right)^{\beta-2} 
		\chi_{t(1+rN^2/(t\|z\|^2))^{-1}}(0)\\
		&\quad =0,
	\end{align*}
       which together with \eqref{beta>11} proves \eqref{var:beta>1}.
     \end{proof}

\section{Total variation distance}
\label{Sec:TVD}

In this section, we will estimate the total variation distance and prove Theorems  \ref{TVD3}-\ref{TVD2}.

We recall that
\[
\mathcal{S}_{N,t}=\frac {1}{N^d} \int_{[0, N]^d} [ U(t\,, x) -1]\,
\d x\quad  \text{and} \quad V_{N}(t) = {\rm Var}(\mathcal{S}_{N,t}).
\]
 We can estimate the total variation distance between the normalized random variable
\[
\widetilde{ \mathcal{S}}_{N,t} :=\mathcal{S}_{N,t} /\sqrt{V_N(t)}
\]
 and a ${\rm N}(0\,,1)$ random variable $\Z$ using  the inequality (\ref{SM1}).
 According to the inequality (\ref{SM1}), we need to express the random variable  $ \widetilde{ \mathcal{S}}_{N,t} $ as a divergence, or, as an It\^o-Walsh stochastic integral. From equation \eqref{mild:U} we obtain  $\widetilde{ \mathcal{S}}_{N,t}=V_N(t)^{-1/2} \delta(v_N)$, where
 \begin{equation} \label{ec1}
 v_N(s,y)=  \frac {1}{N^d}  U(s\,,y) \int_{[0, N]^d} \bm{p}_{s(t-s)/t}\left(
	y - \frac st x \right)  \d x.
 \end{equation}
   In this way,   inequality (\ref{SM1}) yields
\begin{equation} \label{dTV2}
d_{\rm TV} ( \widetilde {\mathcal{S}}_{N,t} , Z) \le \frac 2 {V_N(t)}
\sqrt{ {\rm Var} \left(\langle D\mathcal{S}_{N,t} , v_N \rangle_{\HH} \right)}.
\end{equation}

The Malliavin derivative of $\mathcal{S}_{N,t}$ can be computed as follows
\begin{align} \notag
	D_{s,y} \mathcal{S}_{N,t} & =  \frac {1}{N^d} \left( \int_{[0, N]^d} \bm{p}_{s(t-s)/t}\left(
	y - \frac st x \right) \d x \right)U(s\,,y) \\
				  &\qquad + \frac {1}{N^d} \int_{(s,t)\times\R^d} 
				  \left( \int_{[0, N]^d} \bm{p}_{r(t-r)/t}\left(
	w - \frac st x \right) \d x \right) D_{s,y}U(r\,,w)\,\eta( \d r, \d w).   \label{ec22}
 \end{align}
 From (\ref{ec1}) and (\ref{ec22}), we obtain
 \begin{align} \notag
 \langle D\mathcal{S}_{N,t} \,, v_N \rangle_{\HH} &=  
\frac 1{N^{2d}}\int_0^t \d s \int_{\R^{2d}}\ f(\d z)\d y\int_{[0, N]^{2d}}\d x\d x' \,  \nonumber \\
 & \qquad \qquad \qquad
\bm{p}_{s(t-s)/t}\left(
	y - \frac st x \right)  \bm{p}_{s(t-s)/t}\left(
	y+z - \frac st x \right) U(s\,, y)U(s\,, y+z) \nonumber\\  
&\qquad + \frac 1{N^{2d}} \int_{0}^{t} \int_{\R^d} \, \eta(\d r\,, \d w)  \int_0^r \d s  \int_{\R^{2d}}\ f(\d z)\d y\int_{[0, N]^{2d}}\d x\d x' \nonumber \\
& \qquad \qquad \quad \bm{p}_{r(t-r)/t}\left(
	w - \frac rt x \right)  \bm{p}_{s(t-s)/t}\left(
	y+z - \frac st x' \right) U(s\,, y+z)D_{s, y}U(r\,,w), \nonumber
   \end{align}
   where we use stochastic Fubini's theorem in the second equality.
     As a consequence,
    \begin{equation}    
    {\rm Var} \left( \langle D\mathcal{S}_{N,t} , v_N \rangle_{\HH} \right)   \leq \frac 2{N^{4d}}( \Phi_N^{(1)} + \Phi_N^{(2)}),  \label{1+2}
    \end{equation}
    where 
    \begin{align*}
    \Phi_N^{(1)}&=  \int_{[0,t]^2}\d s_1\d s_2\int_{\R^{4d}} \, f(\d z_1)f(\d z_2)\d y_1   \d y_2 \int_{[0, N]^{4d}}\d x_1\d x_1' \d x_2\d x_2' \, \bm{p}_{s_1(t-s_1)/t}\left(
	y_1 - \frac{s_1}{t} x_1 \right)  \nonumber\\ 
   & \qquad \quad \times  \bm{p}_{s_1(t-s_1)/t}\left(
	y_1+z_1 - \frac{s_1}{t}x_1' \right) \bm{p}_{s_2(t-s_2)/t}\left(
	y_2 - \frac{s_2}{t} x_2 \right)  \bm{p}_{s_2(t-s_2)/t}\left(
	y_2+z_2 - \frac{s_2}{t}x_2' \right)\nonumber \\
& \qquad \quad\times \Cov\left(U(s_1\,, y_1)U(s_1\,, y_1+z_1)\,, U(s_2\,, y_2)U(s_2\,, y_2+z_2)\right),
    \end{align*}
    and
    \begin{align*}
    \Phi_N^{(2)}&=\int_0^t \d r\int_{[0, r]^2}\d s_1\d s_2\int_{\R^{6d}} f(\d b)\d w f(\d z_1)\d y_1f(\d z_2)\d y_2\int_{[0, N]^{4d}}\d x_1\d x_1' \d x_2\d x_2' \, \nonumber\\
  &\quad \quad \times  \bm{p}_{r(t-r)/t}\left(
	w - \frac{r}{t}x_1 \right) \bm{p}_{s_1(t-s_1)/t}\left(
	y_1+z_1 - \frac{s_1}{t}x_1' \right)\\
	&\quad \quad \times \bm{p}_{r(t-r)/t}\left(
	w+b - \frac{r}{t}x_2 \right)\bm{p}_{s_2(t-s_2)/t}\left(
	y_2+z_2 - \frac{s_2}{t}x_2' \right)\\
	& \quad \quad \times \E\left[U(s_1\,, y_1+z_1)D_{s_1, y_1}U(r\,, w)U(s_2\,, y_2+z_2)D_{s_2, y_2}U(r\,, w+b)\right].
    \end{align*}
We are going to estimate the terms  $\Phi_N^{(1)}$ and $\Phi_N^{(2)}$. 
  Using the Poincar\'e inequality (\ref{Poincare:Cov}), we can write
    \begin{align*}
    \Phi_N^{(1)} & \le  \int_{[0,t]^2}\d s_1\d s_2\int_0^{s_1\wedge s_2}\d r\int_{\R^{6d}} \, f(\d z_1)f(\d z_2)f(\d b)\d a\d y_1   \d y_2 \int_{[0, N]^{4d}}\d x_1\d x_1' \d x_2\d x_2' \,\\ 
   & \quad \quad \times \bm{p}_{s_1(t-s_1)/t}\left(
	y_1 - \frac{s_1}{t} x_1 \right)  \bm{p}_{s_1(t-s_1)/t}\left(
	y_1+z_1 - \frac{s_1}{t}x_1' \right) \\
 & \quad \quad	\times\bm{p}_{s_2(t-s_2)/t}\left(
	y_2 - \frac{s_2}{t} x_2 \right)  \bm{p}_{s_2(t-s_2)/t}\left(
	y_2+z_2 - \frac{s_2}{t}x_2' \right) \\
& \quad \quad\times   \Big(\left\|D_{r, a}U(s_1\,, y_1)\right\|_4\left\|U(s_1\,, y_1+z_1)\right\|_4 + \left\|U(s_1\,, y_1)\right\|_4\left\|D_{r, a}U(s_1\,, y_1+z_1)\right\|_4
\Big)\\
&\quad\quad \times \Big(\left\|D_{r, a+b}U(s_2\,, y_2)\right\|_4\left\|U(s_2\,, y_2+z_2)\right\|_4 + \left\|U(s_2\,, y_2)\right\|_4\left\|D_{r, a+b}U(s_2\,, y_2+z_2)\right\|_4
\Big).
 \end{align*}
 The estimates \eqref{moment:U})and \eqref{C_{t, k}}  and semi-group property yield
       \begin{align}
    \Phi_N^{(1)}
 & \le 4  C_{t,4}^2 c_{t,4}^2  \int_{[0,t]^2}\d s_1\d s_2\int_0^{s_1\wedge s_2}\d r\int_{\R^{5d}} \, f(\d z_1)f(\d z_2)f(\d b)\d y_1   \d y_2 \int_{[0, N]^{4d}}\d x_1\d x_1' \d x_2\d x_2' \,\nonumber\\
 &\quad \times \bm{p}_{s_1(t-s_1)/t}\left(
	y_1 - \frac{s_1}{t} x_1 \right) \nonumber\bm{p}_{s_1(t-s_1)/t}\left(
	y_1+z_1 - \frac{s_1}{t}x_1' \right) \\
&\quad\times	\bm{p}_{s_2(t-s_2)/t}\left(
	y_2 - \frac{s_2}{t} x_2 \right)  \bm{p}_{s_2(t-s_2)/t}\left(
	y_2+z_2 - \frac{s_2}{t}x_2' \right)  \nonumber\\
&  \quad\times   \Bigg[ \bm{p}_{r(s_1- r)/s_1 + r(s_2- r)/s_2}\left(
	b - \frac{r}{s_2}y_2 + \frac{r}{s_1}y_1 \right) 
	 + \bm{p}_{r(s_1- r)/s_1 + r(s_2- r)/s_2}\left(
	b - \frac{r}{s_2}(y_2+z_2) + \frac{r}{s_1}y_1 \right)  \nonumber \\
&\qquad  \qquad  + \bm{p}_{r(s_1- r)/s_1 + r(s_2- r)/s_2}\left(
	b - \frac{r}{s_2}y_2 + \frac{r}{s_1}(y_1 +z_1)\right)  \nonumber \\
& \qquad \qquad  + \bm{p}_{r(s_1- r)/s_1 + r(s_2- r)/s_2}\left(
	b - \frac{r}{s_2}(y_2+z_2) + \frac{r}{s_1}(y_1+z_1) \right)  \Bigg]. \nonumber
\end{align}
By symmetry, we conclude that
\begin{align}	
    \Phi_N^{(1)}
 & \le16  C_{t,4}^2 c_{t,4}^2   \int_{[0,t]^2}\d s_1\d s_2 \int_0^{s_1\wedge s_2}\d r\int_{\R^{5d}} \, f(\d z_1)f(\d z_2)f(\d b) 
 \d y_1   \d y_2 \int_{[0, N]^{4d}}\d x_1\d x_1' \d x_2\d x_2' \, \nonumber\\
&\quad\quad \times \bm{p}_{s_1(t-s_1)/t}\left(
	y_1 - \frac{s_1}{t} x_1 \right)   \bm{p}_{s_1(t-s_1)/t}\left(
	y_1+z_1 - \frac{s_1}{t}x_1' \right)\nonumber\\
&\quad \quad \times \bm{p}_{s_2(t-s_2)/t}\left(
	y_2 - \frac{s_2}{t} x_2 \right)  \bm{p}_{s_2(t-s_2)/t}\left(
	y_2+z_2 - \frac{s_2}{t}x_2' \right)\nonumber \\
& \quad \quad\times \bm{p}_{r(s_1- r)/s_1 + r(s_2- r)/s_2}\left(
	b - \frac{r}{s_2}y_2 + \frac{r}{s_1}y_1 \right). \label{1,1}
  \end{align}

As for $\Phi_N^{(2)}$, similarly, by Cauchy-Schwarz inequality and the  estimates \eqref{moment:U})and \eqref{C_{t, k}}, one sees that
\begin{align}
  \Phi_N^{(2)} &\leq C_{t,4}^2 c_{t,4}^2\int_0^t \d r\int_{[0, r]^2}\d s_1\d s_2\int_{\R^{6d}} f(\d b)\d w f(\d z_1)\d y_1f(\d z_2)\d y_2\int_{[0, N]^{4d}}\d x_1\d x_1' \d x_2\d x_2' \, \nonumber\\
  & \qquad \quad\times   \bm{p}_{r(t-r)/t}\left(
	w - \frac{r}{t}x_1 \right) \bm{p}_{s_1(t-s_1)/t}\left(
	y_1+z_1 - \frac{s_1}{t}x_1' \right) \bm{p}_{r(t-r)/t}\left(
	w+b - \frac{r}{t}x_2 \right)  \nonumber\\
	& \qquad \quad \times \bm{p}_{s_2(t-s_2)/t}\left(
	y_2+z_2 - \frac{s_2}{t}x_2' \right) \bm{p}_{s_1(r-s_1)/r}\left(
	y_1 - \frac{s_1}{r}w \right)\bm{p}_{s_2(r-s_2)/r}\left(
	y_2- \frac{s_2}{r}(w+b) \right) \nonumber\\
	&= C_{t,4}^2 c_{t,4}^2\int_0^t \d r\int_{[0, r]^2}\d s_1\d s_2\int_{\R^{4d}} f(\d b)\d w f(\d z_1)f(\d z_2)\int_{[0, N]^{4d}}\d x_1\d x_1' \d x_2\d x_2' \, \nonumber \\
  & \qquad \quad\times   \bm{p}_{r(t-r)/t}\left(
	w - \frac{r}{t}x_1 \right) \bm{p}_{s_1(t-s_1)/t + s_1(r-s_1)/r}\left(
	z_1- \frac{s_1}{t}x_1' + \frac{s_1}{r}w \right) \nonumber\\
	& \qquad \quad \times  \bm{p}_{r(t-r)/t}\left(
	w+b - \frac{r}{t}x_2 \right)  \bm{p}_{s_2(t-s_2)/t + s_2(r-s_2)/r}\left(
	z_2 - \frac{s_2}{t}x_2' +  \frac{s_2}{r}(w+b) \right), \label{N2}
			  \end{align}
where we use semi-group property in the equality.

In the following, we will prove Theorems  \ref{TVD3}-\ref{TVD2} separately. The identity below will be used several times later on:
 \begin{align}\label{scale}
\bm{p}_t(\sigma x) = \sigma^{-d}\bm{p}_{t/\sigma^2}(x), \quad \text{for all $x \in \R^d$ and $t, \,\sigma>0$}.
\end{align}

\subsection{Proof of Theorem \ref{TVD3}}
\begin{proof}[Proof of Theorem \ref{TVD3}]
With the notation introduced in \eqref{1+2} and according to Theorem \ref{th:AV:d>1}, it suffices to show that
\begin{align}\label{1and2}
 N^{-4d+3}\left(\Phi_N^{(1)}+ \Phi_N^{(1)}\right)  \le C,
\end{align}
 for all $N\ge \e$ and for some constant $C$ depending on $t$.

We will start with the expression for  $\Phi_N^{(1)}$ given in \eqref{1,1}.  Using the elementary relation
\begin{equation} \label{E3}
\pmb{p}_\sigma(x)  \pmb{p}_\sigma(y)= 2^d \pmb{p}_{2\sigma}(x+y)  \pmb{p}_{2\sigma}(x-y) , \qquad \sigma>0, \, x,y \in \R^d,
\end{equation}
we can write
\begin{align*}
&\bm{p}_{s_1(t-s_1)/t}\left(	y_1 - \frac{s_1}{t} x_1 \right)   
   \bm{p}_{s_1(t-s_1)/t}\left(
	y_1+z_1 - \frac{s_1}{t}x_1' \right) \\
	&\qquad=
	 2^d \bm{p}_{2s_1(t-s_1)/t}\left(2y_1 +z_1- \frac{s_1}{t} (x_1+x'_1) \right)   
   \bm{p}_{2s_1(t-s_1)/t}\left(
	 z_1 - \frac{s_1}{t}(x_1'-x_1) \right)\\
	 &\qquad = \bm{p}_{s_1(t-s_1)/(2t)}\left(y_1 +\frac {z_1}2- \frac{s_1}{2t} (x_1+x'_1) \right)   
   \bm{p}_{2s_1(t-s_1)/t}\left(
	 z_1 - \frac{s_1}{t}(x_1'-x_1) \right),
	\end{align*}
	where in the second equality we used the scaling property \eqref{scale}. In the same way, we obtain
	 \begin{align*}
&\bm{p}_{s_2(t-s_2)/t}\left(	y_2 - \frac{s_2}{t} x_2 \right)   
   \bm{p}_{s_2(t-s_2)/t}\left(
	y_2+z_2 - \frac{s_2}{t}x_2' \right) \\
	 &\qquad =\bm{p}_{s_2(t-s_2)/(2t)}\left(y_2 +\frac {z_2}2- \frac{s_2}{2t} (x_2+x'_2) \right)   
   \bm{p}_{2s_2(t-s_2)/t}\left(
	 z_2 - \frac{s_2}{t}(x_2'-x_2) \right).
	\end{align*}
	Therefore,
	\begin{align*}
	 \mathcal{L} &:=\int_{\R^{2d}} \d  y_1 \d  y_2\ \bm{p}_{s_1(t-s_1)/t}\left(	y_1 - \frac{s_1}{t} x_1 \right)   
   \bm{p}_{s_1(t-s_1)/t}\left(
	y_1+z_1 - \frac{s_1}{t}x_1' \right)  \bm{p}_{s_2(t-s_2)/t}\left(	y_2 - \frac{s_2}{t} x_2 \right)   \\
	& \qquad \times   \bm{p}_{s_2(t-s_2)/t}\left(
	y_2+z_2 - \frac{s_2}{t}x_2' \right)
	 \bm{p}_{r(s_1- r)/s_1 + r(s_2- r)/s_2}\left(
	b - \frac{r}{s_2}y_2 + \frac{r}{s_1}y_1 \right) \\
	&= \left( \frac {s_1}r\right)^d\bm{p}_{s_1(t-s_1)/t}\left(
	 z_1 - \frac{s_1}{t}(x_1'-x_1) \right)     \bm{p}_{s_2(t-s_2)/t}\left(
	 z_2 - \frac{s_2}{t}(x_2'-x_2) \right) \\
	 & \qquad \times  \int_{\R^{2d}} \d  y_1 \d  y_2 \
	 \bm{p}_{s_1(t-s_1)/(2t)}\left(y_1 +\frac {z_1}2- \frac{s_1}{2t} (x_1+x'_1) \right)   
	  \bm{p}_{s_2(t-s_2)/(2t)}\left(y_2 +\frac {z_2}2- \frac{s_2}{2t} (x_2+x'_2) \right)\\
	  & \qquad \times   \bm{p}_{(s_1/r)^2[r(s_1- r)/s_1 + r(s_2- r)/s_2]}\left(
	\frac {s_1}rb - \frac{s_1}{s_2}y_2 + y_1 \right).
	\end{align*}
	With the notation
	\[
	\mathcal{M}=\bm{p}_{2s_1(t-s_1)/t}\left(
	 z_1 - \frac{s_1}{t}(x_1'-x_1) \right)     \bm{p}_{2s_2(t-s_2)/t}\left(
	 z_2 - \frac{s_2}{t}(x_2'-x_2) \right)
	 \]
	 integrating in $y_1$ and using the semigroup property, yields
	 \begin{align*}
	 \mathcal{L} & =\left( \frac {s_1}r\right)^d \mathcal{M} 
	 \int_{\R^d}  \d y_2\ \bm{p}_{s_2(t-s_2)/(2t)}\left(y_2 +\frac {z_2}2- \frac{s_2}{2t} (x_2+x'_2) \right)\\
	 & \qquad \times   \bm{p}_{ s_1(t-s_1)/(2t) + (s_1/r)^2[r(s_1- r)/s_1 + r(s_2- r)/s_2]}\left(
	\frac {s_1}rb - \frac{s_1}{s_2}y_2  -\frac {z_1}2 + \frac {s_1}{2t} (x_1+ x'_1) \right) \\
	&=\left( \frac {s_2} r\right)^d \mathcal{M} 
	 \int_{\R^d}  \d y_2\ \bm{p}_{s_2(t-s_2)/(2t)}\left(y_2 +\frac {z_2}2- \frac{s_2}{2t} (x_2+x'_2) \right)\\
	 & \qquad \times   \bm{p}_{ (s_2/s_1)^2 \{s_1(t-s_1)/(2t) + (s_1/r)^2[r(s_1- r)/s_1 + r(s_2- r)/s_2]\}}\left(
	\frac {s_2}rb - y_2  -\frac {s_2}{ 2s_1}z_1 + \frac {s_2}{2t} (x_1+ x'_1) \right),
	 \end{align*}
	 where in the second equality we used the scaling property \eqref{scale}.
	 Integrating in $y_2$ and using the semigroup property we finally get
\[
	 \mathcal{L}= \left(\frac {s_2} r\right)^d  \mathcal{M}\,
	  \bm{p}_{  \alpha_1}\left(
	\frac {s_2}rb   -\frac {s_2}{ 2s_1}z_1 + \frac {s_2}{2t} (x_1+ x'_1-x_2-x_2') + \frac {z_2}2 \right),
	\]
	where
	\[
	\alpha_1=\frac {s_2(t-s_2)}{2t}+ 
	 \left( \frac {s_2}{s_1} \right)^2 \left\{  \frac {s_1(t-s_1)}{2t} + \left(\frac{s_1}{r} \right)^2\left [ \frac {r(s_1- r)}{s_1} + \frac {r(s_2- r)}{s_2}\right ] \right\}.
	\]
	A further application of the scaling property \eqref{scale} yields
	\[
	 \mathcal{L}=    \mathcal{M} \,
	  \bm{p}_{  \alpha_2}\left(
	b   -\frac {r}{ 2s_1}z_1 + \frac r{{\color{black}2}s_2}  z_2+ \frac {r}{2t} (x_1+ x'_1-x_2 -x_2') \right),
	\]
	where
	\[
	\alpha_2= \left( \frac r{s_2} \right)^2 \alpha_1 = \frac{ r^2(t-s_2)}{2ts_2} + \frac{ r^2(t-s_1)}{2ts_1}  + \frac {r(s_1-r)}{s_1} + \frac {r(s_2-r)}{s_2}.
	\]
	Making the change of variables $x_i\to Nx_i$ we obtain
	\begin{align*}
	N^{-4d+3}\Phi_N^{(1)}&\leq N^3
	16  C_{t,4}^2 c_{t,4}^2  \int_{[0, 1]^{4d}}\d x_1\d x_1' \d x_2\d x_2' \int_{\R^{3d}}  f(\d z_1) f(\d z_2) f(\d b) 
	\int_{[0,t]^2} \d s_1 \d s_2 \int_0^{s_1\wedge s_2} \d r  \\
	&  \quad \times  \bm{p}_{2s_1(t-s_1)/t}\left(
	 z_1 - \frac{Ns_1}{t}(x_1'-x_1) \right)     \bm{p}_{2s_2(t-s_2)/t}\left(
	 z_2 - \frac{Ns_2}{t}(x_2'-x_2) \right)\\
	 & \qquad \times
  \bm{p}_{  \alpha_2}\left(
	b   -\frac {r}{ 2s_1}z_1 + \frac r{2s_2}  z_2+ \frac {Nr}{2t} (x_1+ x'_1-x_2 -x_2') \right).
  \end{align*}
	With a further change of variables $s_1= \frac t{N}r_1$, $s_2=\frac t{N}r_2$,  $r=\frac tN \sigma$, we can write
	\begin{align*}
	N^{-4d+3}\Phi_N^{(1)}&= 
	16  C_{t,4}^2 c_{t,4}^2   t^3 \int_{[0, 1]^{4d}}\d x_1\d x_1' \d x_2\d x_2' \int_{\R^{3d}}  f(\d z_1) f(\d z_2) f(\d b) 
	\int_{[0, N]^2} \d r_1 \d r_2  \\
		&  \qquad \times    
  \bm{p}_{\frac {2tr_1}{N} (1-\frac {r_1}N  )}\left(
	 z_1 - r_1(x_1'-x_1)  \right)     \bm{p}_{\frac {2 t r_2}{N} (1-\frac { r_1}N )}\left(
	 z_2 - r_2 (x_2'-x_2)  \right)\\
	 & \qquad \times  \int_0^{ r_1\wedge r_2  } \d \sigma\
  \bm{p}_{  \gamma_{3,N}}\left(
	b   -\frac {\sigma}{ 2r_1}z_1 + \frac {\sigma}{2r_2}  z_2+  \frac {\sigma}{2}(x_1+ x'_1-x_2 -x_2') \right),
  \end{align*}
  where
  \[
  \gamma_{3,N}=   \frac {t \sigma^2}{2N} \left( \frac 1 {r_1}+ \frac 1{r_2} -\frac 2 N \right)+ \frac {t\sigma}N \left( 2- \frac {\sigma}{r_1} -\frac { \sigma}{r_2} \right).
  \]
  We also set
  \[
  \gamma_{1,N}= \frac {2tr_1}{N} (1-\frac {r_1}N  ), \qquad    \gamma_{2,N}= \frac {2tr_2}{N} (1-\frac {r_2}N  ).
  \]
With the notation $y_1= r_1(x_1'-x_1)$,  $y_2=r_2 (x_2'-x_2) $, $y_3=  \frac {\sigma}2(x_1+ x'_1-x_2 -x_2')$, the Fourier transform of the function
	\[
\Psi_1(z_1,z_2,b) := \bm{p}_{ \gamma_{1,N}}\left(
	 z_1 -y_1   \right)     \bm{p}_{\gamma_{2,N}}\left(
	 z_2 -  y_2\right)    
  \bm{p}_{  \gamma_{3,N}}\left(
	b   -\frac {\sigma} {2r_1}z_1 + \frac  {\sigma}{2 r_2} z_2+ y_3 \right)
	\]
	is given by
	\begin{align*}
	\hat{\Psi} _1 (\xi_1, \xi_2, \xi_3) &= \exp \left(-\frac {\gamma_{1,N}}2 \| \xi_1 + \frac  {\sigma} {2r_1}  \xi_3\|^2 - \frac  {\gamma_{2,N}}2  \| \xi_2 - \frac {\sigma} {2 r_2} \xi_3\|^2 -\frac  {\gamma_{3,N}}2 \| \xi_3 \|^2\right) \\
	& \qquad \times \exp\left( i \left(\xi_1+\frac {\sigma} {2r_1} \xi_3 \right)  \cdot y_1 + i  \left(\xi_2 -\frac {\sigma} {2 r_2} \xi_3 \right) \cdot y_2 - i \xi_3 \cdot y_3  \right).
	\end{align*}
	Notice that
	\begin{align*}
	&\left(\xi_1+\frac {\sigma} {2r_1} \xi_3 \right)  \cdot y_1 +   \left(\xi_2 -\frac {\sigma} {2r_2} \xi_3 \right) \cdot y_2 -  \xi_3 \cdot y_3   \\
		&{\color{black}= -  x_1 \cdot   (r_1\xi_1+ \sigma\xi_3) -x_2 \cdot    (r_2 \xi_2 -  \sigma\xi_3 ) + x_1' \cdot \left( r_1\xi_1 \right) 
	+x_2' \cdot \left( r_2\xi_2\right).}
	\end{align*}
Set
\begin{align*}
\Delta_1( \xi_1, \xi_2,\xi_3)&:= \int_{[0, 1]^{4d}}\d x_1\d x_1' \d x_2\d x_2' \\
&  \qquad {\color{black} \exp\left(i \left( -  x_1 \cdot   (r_1\xi_1+ \sigma\xi_3) -x_2 \cdot    (r_2 \xi_2 -  \sigma\xi_3 ) + x_1' \cdot \left( r_1\xi_1 \right) 
	+x_2' \cdot \left( r_2\xi_2\right) \right)\right).}
	\end{align*}
	Then, Parseval's identity  implies that 
	\begin{align*}
	N^{-4d+3}\Phi_N^{(1)}&\leq
	16  C_{t,4}^2 c_{t,4}^2   t^3   \frac 1 { (2\pi)^{3d}} \int_{[0, N]^2} \d r_1 \d r_2   \int_0^{ r_1\wedge r_2  } \d \sigma
	 \int_{\R^{3d}}  \hat{f}(\d \xi_1) \hat{f}(\d \xi_2) \hat{f}(\d \xi_3)   
	 \Delta_1( \xi_1, \xi_2,\xi_3)\\
	 & \qquad \times \exp \left(-\frac {\gamma_{1,N}}2 \| \xi_1 + \frac  {\sigma} {2r_1}  \xi_3\|^2 - \frac  {\gamma_{2,N}}2  \| \xi_2 - \frac {\sigma} {2 r_2} \xi_3\|^2 -\frac  {\gamma_{3,N}}2 \| \xi_3 \|^2\right) \\
	& \le C \int_{[ 0, \infty)^3}  \d \sigma   \d r_1 \d r_2    \int_{\R^{3d}}  \hat{f}(\d \xi_1) \hat{f}(\d \xi_2) \hat{f}(\d \xi_3)
	|\Delta_1( \xi_1, \xi_2,\xi_3)|.
\end{align*}
Taking into account that $\mathcal{R}(f) < \infty$, which is equivalent by Lemma \ref{lem:f:1} to $\int_{\R^d} \| z \|^{-1} \hat{f}(\d z)<\infty$, it suffices to show that
 \begin{equation} \label{BBB1}
 \int_{[ 0, \infty)^3}  \d \sigma   \d r_1 \d r_2     |\Delta_1( \xi_1, \xi_2,\xi_3)| \le  C(\| \xi_1 \| \| \xi_2 \| \| \xi_3 \|)^{ -1},
  \end{equation}
  for some constant $C$  not depending on $t$.
   We have
   \begin{align*}
   |\Delta_1( \xi_1, \xi_2,\xi_3)| & =  \prod_{j=1}^d \frac { |\e^{- i  r_1 \xi^j_1} -1|}{ r_1| \xi^j_1| }\frac { |\e^{- i  r_2\xi^j_2} -1|}{r_2 | \xi^j_2| }
   \frac { |\e^{ i (r_1\xi^j_1 + \sigma \xi^j_3)} -1|}{ | r_1\xi^j_1   + \sigma \xi^j_3 |}  \frac { |\e^{ i (r_2\xi^j_2 -\sigma \xi^j_3)} -1|}{ | r_2 \xi^j_2   -  \sigma \xi^j_3|}\\
   & \le  2^{4d}\prod_{j=1}^d   (1\wedge  (r_1  | \xi^j_1|)^{-1} ) (1\wedge (r_2 | \xi^j_2|)^{-1})   (1\wedge | r_1\xi^j_1  +\sigma \xi^j_3 |^{-1}) (1\wedge | r_2 \xi^j_2 - \sigma \xi^j_3 |^{-1}).	\end{align*}
For any $x\in \R^d$, we have
   \begin{equation}  \label{BB}
   \prod_{i=1}^d   \left(1\wedge | x^j| ^{-1} \right) \le  1\wedge ( \max_{1\le j \le d}  |x_j|)^{-1} \le  1\wedge d^{-1/2} \|x\| ^{-1}\le 1\wedge \|x\|^{-1}.
   \end{equation}
	As a consequence,
	\[
	 |\Delta_1( \xi_1, \xi_2,\xi_3)|  \le     (1\wedge  (r_1  \| \xi_1\|)^{-1} ) (1\wedge ( r_2 \| \xi_2\|)^{-1})   (1\wedge \| r_1 \xi_1 +\sigma \xi_3\|^{-1}) (1\wedge \| r_2\xi_2 - \sigma\xi_3 \|^{-1}),
	\]
which implies
	\begin{align*}
	& \int_{[ 0, \infty)^3}  \d \sigma   \d r_1 \d r_2  \    |\Delta( \xi_1, \xi_2,\xi_3)|  
	 \le 
	C (\| \xi_1 \| \| \xi_2 \| \| \xi_3 \|)^{ -1 } \\
	& \quad \times   \int_{[0,\infty)^3} \d x \d y \d z\  (1 \wedge  x^{-1}) (1 \wedge  y^{-1} ) ( 1\wedge \| x e_1 + ze_3 \|^{-1})
	 ( 1\wedge \| y e_2 - ze_3 \|^{-1}),
	 \end{align*}
	 where the last inequality follows from a change of variable, and $e_i$, $i=1,2,3$ are unit vectors.
	Note that
	\[
	\| x e_1 + ze_3 \|^2= x^2 + z^2 +2xz \langle e_1, e_3 \rangle \ge x^2 + z^2 -2xz = (x-z)^2.
	\]
	Therefore,
	\begin{align*}
	& \int_{[0,\infty)^3} \d x \d y \d z\  (1 \wedge  x^{-1}) (1 \wedge  y^{-1} ) ( 1\wedge \| x e_1 + ze_3 \|^{-1})
	 ( 1\wedge \| y e_2 - ze_3 \|^{-1}) \\
	& \le
	  \int_{\R^3} \d x \d y \d z\  (1 \wedge | x|^{-1}) (1 \wedge  |y|^{-1} ) ( 1\wedge |x-z|^{-1})
	 ( 1\wedge |y-z|^{-1}).
	 \end{align*}
	Finally, applying H\"older and Young's inequality, we obtain
	\begin{align*}
	& \int_{\R^3} \d x \d y \d z \ (1 \wedge | x|^{-1}) (1 \wedge  |y|^{-1} ) ( 1\wedge |x-z|^{-1})
	 ( 1\wedge |y-z|^{-1})   \\
	& \quad \le  \| (1\wedge | \bullet |^{-1})*(  1\wedge | \bullet |^{-1})\|_{L^2(\R)}^2  \le \| 1\wedge | \bullet |^{-1} \|_{L^{4/3} (\R)} ^4 <\infty.
	 \end{align*}

	 \medskip
	 Let us turn now to the analysis of $\Phi^{(2)}_N$ given in \eqref{N2}. Because the variable $w$ appears in four heat kernels and three of them have different variances, we cannot proceed as in the case of $\Phi^{(1)}_N$. Then, we start making the changes of variables without integrating in $w$. The first change of variables is $x_i \to Nx_i$, which yields
	 \begin{align*}
	N^{-4d +3}\Phi^{(2)}_N&= N^3 C_{t,4}^2 c_{t,4}^2\int_0^t \d r\int_{[0, r]^2}\d s_1\d s_2\int_{\R^{4d}} f(\d b)\d w f(\d z_1)f(\d z_2)\int_{[0, 1]^{4d}}\d x_1\d x_1' \d x_2\d x_2' \,   \\
  & \qquad \quad\times   \bm{p}_{r(t-r)/t}\left(
	w - \frac{rN}{t}x_1 \right) \bm{p}_{s_1(t-s_1)/t + s_1(r-s_1)/r}\left(
	z_1- \frac{s_1N}{t}x_1' + \frac{s_1}{r}w \right)  \\
	& \qquad \quad \times  \bm{p}_{r(t-r)/t}\left(
	w+b - \frac{rN}{t}x_2 \right)  \bm{p}_{s_2(t-s_2)/t + s_2(r-s_2)/r}\left(
	z_2 - \frac{s_2N}{t}x_2' +  \frac{s_2}{r}(w+b) \right).
	 \end{align*}
	 Next we make the change of variables $s_1=\frac t {N}r_1$, $s_2=\frac t {N} r_2$ and $r=\frac tN \sigma$,  in order to obtain
	 	 \begin{align*}
	N^{-4d +3}\Phi^{(2)}_N&=  t^3 C_{t,4}^2 c_{t,4}^2\int_{[0, N]}  d\sigma \int_{[0,\sigma]^2}  \d r_1 \d r_2 \int_{\R^{4d}} f(\d b)\d w f(\d z_1)f(\d z_2)
 	\int_{[0, 1]^{4d}}\d x_1\d x_1' \d x_2\d x_2' \,    \\
  & \qquad  \times      \bm{p}_{\frac {t\sigma}  {N} (1- \frac \sigma{N}) }\left(
	w -  \sigma x_1 \right) \bm{p}_{ \frac { tr_1}{N}  (1-\frac {r_1}{N}) + \frac {tr_1} {N\sigma} (\sigma-r_1) }\left(
	z_1-    r_1x_1' + \frac {r_1}{\sigma}w \right)  \\
	& \qquad   \times  \bm{p}_{\frac {t\sigma}  {N} (1- \frac \sigma{N}) }\left(
	w+b - \sigma  x_2\right)  \bm{p}_{\frac {tr_2} {N}  (1-\frac {r_2}{N}) +  \frac {tr_2} {N\sigma} (\sigma-r_2)}\left(
	z_2 - r_2 x_2' +  \frac{ r_2}\sigma(w+b) \right).
	 \end{align*}
	 To simplify the presentation, we set
	 \[
	 \gamma_{0,N}=\frac {t\sigma}  {N} \left(1- \frac \sigma{N}\right), \qquad \gamma_{1,N} =\frac { tr_1}{N}  \left(1-\frac {r_1}{N}\right) + \frac {tr_1} {N\sigma} (\sigma-r_1)
	 \]
	 and
	 \[
	 \gamma_{2,N} =\frac { tr_2}{N}  \left(1-\frac {r_2}{N}\right) + \frac {tr_2} {N\sigma} (\sigma-r_2).
	 \]
	 With the change of variables $z=w-\sigma x_1$ and the notation $y_1= x_1' -x_1$, $y_2=x_2'-x_1 $ and $y_3=x_2-x_1$, we can write
	 \begin{align*}
	 \Psi_2(z_1, z_2, b) &: =  \int_{\R^d} \d w \  \bm{p}_{ \gamma_{0,N} }\left(
	w -  \sigma x_1  \right)  \bm{p}_{  \gamma_{1,N} }\left(
	z_1-    r_1 x_1' + {\color{black}\frac{r_1}{\sigma}}w \right)   \\
	& \qquad \times 
	  \bm{p}_{ \gamma_{0,N} }\left(
	w+b -  \sigma x_2  \right)  \bm{p}_{ \gamma_{2,N}}\left(
	z_2 -  r_2 x_2' +  \frac{r_2}{ \sigma}(w+b) \right)\\
	 &= \int_{\R^d} \d z\   \bm{p}_{ \gamma_{0,N} }\left(
	z \right)  \bm{p}_{  \gamma_{1,N} }\left(
	z_1-    r_1 y_1 +  {\color{black}\frac{r_1}{\sigma}}z \right)   \\
	& \qquad \times 
	  \bm{p}_{ \gamma_{0,N} }\left(
	z+b -  \sigma  y_3  \right)  \bm{p}_{ \gamma_{2,N}}\left(
	z_2 -  r_2  y_2 +  \frac{r_2}{ \sigma}(z+b) \right).
	\end{align*}
	The Fourier transform of the function  $\Psi_2(z_1, z_2, b) $
	is equal to
	\begin{align*}
	\hat{ \Psi}_2(\xi_1, \xi_2, \xi_3) &= \int_{\R^d} \d z  \bm{p}_{ \gamma_{0,N} } (z)
	\exp \left( -\frac 12 \gamma_{1,N} \| \xi_1 \|^2 -\frac 12 \gamma_{0,N} \| \xi_2 -\frac {r_2} \sigma \xi_3  \|^2 -\frac 12 \gamma_{2,N} \| \xi_3  \| ^2 \right) \\
	& \quad \times  \exp \left(i    \xi_1 \cdot \left(\ r_1 y_1  - {\color{black}\frac{r_1}{\sigma}} z\right)  
	+ i  \left(\xi_2  -\frac {r_2} \sigma  \xi_3\right) \cdot \left( \sigma y_3-z \right) + i   \xi_3   \cdot \left(   r_2 y_2 -\frac {r_2} \sigma z  \right) \right)\\
	&= 	\exp \left( -\frac 12 \gamma_{1,N} \| \xi_1 \|^2 -\frac 12 \gamma_{0,N} \| \xi_2 -\frac {r_2} \sigma \xi_3  \|^2 -\frac 12 \gamma_{2,N} \| \xi_3  \| ^2
	-\frac  12 \gamma_{0,N} \| {\color{black}\frac{r_1}{\sigma}} \xi_1 +  \xi_2\|^2 \right) \\
	& \quad \times  \exp \left(i    r_1 \xi_1 \cdot  y_1   
	+ i  \left(\sigma \xi_2  -r_2 \xi_3\right) \cdot  y_3  + i    r_2 \xi_3   \cdot    y_2 \right).
	\end{align*}
	Set 
	\begin{align*}
	\Delta_2( \xi_1, \xi_2, \xi_3)&:= 
	\int_{[0, 1]^{4d}}\d x_1\d x_1' \d x_2\d x_2' \\
&  \qquad   \exp \left(i    r_1 \xi_1 \cdot   (x_1'- x_1)   
	+ i  \left(\sigma \xi_2  -r_2 \xi_3\right) \cdot  (x_2-x_1)  + i    r_2 \xi_3   \cdot    (x_2'-x_1) \right).
	\end{align*}
Then, Parseval's identity implies that
\begin{align*}
	N^{-4d+3}\Phi_N^{(1)}&=
  C_{t,4}^2 c_{t,4}^2   t^3   \frac 1 { (2\pi)^{3d}}
  \int_{[0, N]}  d\sigma \int_{[0,\sigma]^2}  \d r_1 \d r_2 
   \int_{\R^{3d}}  \hat{f}(\d \xi_1) \hat{f}(\d \xi_2) \hat{f}(\d \xi_3)  
	 \Delta_1( \xi_1, \xi_2,\xi_3)\\
 & \qquad \times	 \exp \left( -\frac 12 \gamma_{1,N} \| \xi_1 \|^2 -\frac 12 \gamma_{0,N} \| \xi_2 -\frac {r_2} \sigma \xi_3  \|^2 -\frac 12 \gamma_{2,N} \| \xi_3  \| ^2
	-\frac  12 \gamma_{0,N} \| {\color{black}\frac{r_1}{\sigma}}\xi_1 +  \xi_2\|^2 \right) \\
	 & \le C \int_{[0, \infty)^3}  \d \sigma  \d r_1 \d r_2    \int_{\R^{3d}}  \hat{f}(\d \xi_1) \hat{f}(\d \xi_2) \hat{f}(\d \xi_3)
	|\Delta_2( \xi_1, \xi_2,\xi_3)|.
\end{align*}
Taking into account that $\mathcal{R}(f) < \infty$, which is equivalent by Lemma \ref{lem:f:1} to $\int_{\R^d} \| z \|^{-1} \hat{f}(\d z)<\infty$, it suffices to show that
 \begin{equation} \label{BBB}
\int_{[0, \infty)^3}  \d \sigma  \d r_1 \d r_2 |\Delta_2( \xi_1, \xi_2,\xi_3)| \le  C(\| \xi_1 \| \| \xi_2 \| \| \xi_3 \|)^{ -1},
  \end{equation}
  for some constant $C$ not depending on $t$. Taking into account that
\begin{align*}
|  \Delta_2( \xi_1, \xi_2, \xi_3)|
 & =  \prod_{j=1}^d  \frac  { | \e^{-i(r_1 \xi^j_1 + r_2 \xi^j_3|} -1} { | r_1 \xi^j_1 + r_2 \xi^j_3|}
   \frac {  | \e^{ i r_1 \xi^j_1} -1|} {| r_1 \xi^j_1|}
    \frac {  | \e^{ i( \sigma \xi^j_2-r_2  \xi^j_3)} -1|} {| \sigma \xi^j_2-r_2 \xi^j_3 |}
     \frac {  | \e^{ i r_2 \xi_3^j} -1|} {| r_2 \xi_3^j|} \\
     &\le  (1\wedge \| r_1  \xi_1 + r_2 \xi_3 \| ^{-1} ) (1\wedge \| r_1 \xi_1 \|^{-1})(1\wedge \| \sigma \xi_2 -r_2 \xi_3 \|^{-1})(1\wedge \| r_2 \xi_3 \|^{-1}),
  \end{align*}  the  proof of  \eqref{BBB} can be done by the same arguments as in the proof of \eqref{BBB1}.
  The proof of Theorem  \ref{TVD3} is now complete.
\end{proof}

\subsection{Proof of Theorem \ref{TVD1}}

\begin{proof}[Proof of Theorem \ref{TVD1}]
By Theorem \ref{th:AV:d=1} and Proposition \ref{propTV}, we need show that there exists a constant $C>0$ such that for all $N \geq \e$,
\begin{align} \label{var0}
	{\rm Var} \left(\langle D\mathcal{S}_{N,t} , v_N \rangle_{\HH}\right)
	\leq C  \left(\frac{\log N}{N}\right)^{3}.
\end{align}
We recall the decomposition \eqref{1+2} of  ${\rm Var} \left(\langle D\mathcal{S}_{N,t} , v_N \rangle_{\HH}\right)$.

\medskip
\noindent
{\it Estimation of  $\Phi_N^{(1)}$}.
 According to \eqref{1,1},  we integrate $x_1'$ and $x_2'$ on $\R$ and obtain
 \begin{align*}
 \Phi_N^{(1)}&\le 16  C_{t,4}^2 c_{t,4}^2     f(\R)^2 \int_{[0,t]^2}\d s_1\d s_2\, \frac{t^2}{s_1s_2}\int_0^{s_1\wedge s_2}\d r \int_{\R^3}f(\d b)\d y_1   \d y_2 \int_{[0, N]^2}\d x_1 \d x_2\\
   & \quad \quad  \times   \bm{p}_{s_1(t-s_1)/t}\left(
	y_1 - \frac{s_1}{t} x_1 \right)   \bm{p}_{s_2(t-s_2)/t}\left(
	y_2 - \frac{s_2}{t} x_2 \right)   \bm{p}_{r(s_1- r)/s_1 + r(s_2- r)/s_2}\left(
	b - \frac{r}{s_2}y_2 + \frac{r}{s_1}y_1 \right)\\
	&=16  C_{t,4}^2 c_{t,4}^2  f(\R)^2 \int_{[0,t]^2}\d s_1\d s_2\, \frac{t^2}{s_2}\int_0^{s_1\wedge s_2}\d r\, \frac{1}{r} \int_{\R^3}f(\d b)\d y_1   \d y_2 \int_{[0, N]^2}\d x_1 \d x_2\,  \\ 
   & \quad \quad  \times  \bm{p}_{s_1(t-s_1)/t}\left(
	y_1 - \frac{s_1}{t} x_1 \right) \bm{p}_{s_2(t-s_2)/t}\left(
	y_2 - \frac{s_2}{t} x_2 \right) \\
 & \quad \quad  \times   \bm{p}_{[r(s_1- r)/s_1 + r(s_2- r)/s_2]/(r^2/s_1^2)}\left(
	\frac{s_1}{r}b - \frac{s_1}{s_2}y_2 +y_1 \right),
  \end{align*}
where in the equality we use  property \eqref{scale} with $d=1$.
Hence, by the semigroup property, we see that
\begin{align*}
 \Phi_N^{(1)}&\le  16  C_{t,4}^2 c_{t,4}^2 f(\R)^2 \int_{[0,t]^2}\d s_1\d s_2\, \frac{t^2}{s_2}\int_0^{s_1\wedge s_2}\d r\, \frac{1}{r} \int_{\R^2}f(\d b) \d y_2 \int_{[0, N]^2}\d x_1 \d x_2\,  \\ 
   & \quad \quad   \bm{p}_{s_2(t-s_2)/t}\left(
	y_2 - \frac{s_2}{t} x_2 \right)   \bm{p}_{s_1(t-s_1)/t +[r(s_1- r)/s_1 + r(s_2- r)/s_2]/(r^2/s_1^2)}\left(
	\frac{s_1}{r}b - \frac{s_1}{s_2}y_2 + \frac{s_1}{t} x_1 \right).
\end{align*}
We repeat the use of  \eqref{scale} with $d=1$ and the semigroup property to obtain 
\begin{align*}
 \Phi_N^{(1)}&\le  16  C_{t,4}^2 c_{t,4}^2 f(\R)^2 \int_{[0,t]^2}\d s_1\d s_2\, \frac{t^2}{s_1}\int_0^{s_1\wedge s_2}\d r\, \frac{1}{r} \int_{\R^2}f(\d b) \d y_2 \int_{[0, N]^2}\d x_1 \d x_2\,  \\ 
   & \quad \quad   \times  \bm{p}_{s_2(t-s_2)/t}\left(
	y_2 - \frac{s_2}{t} x_2 \right)   \bm{p}_{[s_1(t-s_1)/t +[r(s_1- r)/s_1 + r(s_2- r)/s_2]/(r^2/s_1^2)]/(s_1^2/s_2^2)}\left(
	\frac{s_2}{r}b - y_2 + \frac{s_2}{t} x_1 \right)\\
	&=  16  C_{t,4}^2 c_{t,4}^2 f(\R)^2 \int_{[0,t]^2}\d s_1\d s_2\, \frac{t^2}{s_1}\int_0^{s_1\wedge s_2}\d r\, \frac{1}{r} \int_{\R}f(\d b)  \int_{[0, N]^2}\d x_1 \d x_2\,  \\ 
   & \quad \quad \times \bm{p}_{s_2(t-s_2)/t +[s_1(t-s_1)/t +[r(s_1- r)/s_1 + r(s_2- r)/s_2]/(r^2/s_1^2)]/(s_1^2/s_2^2)}\left(
	\frac{s_2}{r}b  + \frac{s_2}{t} x_1 - \frac{s_2}{t} x_2\right)\\
	&= 16  C_{t,4}^2 c_{t,4}^2  f(\R)^2 \int_{[0,t]^2}\d s_1\d s_2\, \frac{t^2}{s_1s_2}\int_0^{s_1\wedge s_2}\d r\int_{\R}f(\d b)  \int_{[0, N]^2}\d x_1 \d x_2\,  \bm{p}_{2r(t-r)/t}\left(
	b  + \frac{r}{t} (x_1 - x_2)\right),
  \end{align*}
where in the second equality we use  the relation
$$
2r(t-r)/t= [s_2(t-s_2)/t +[s_1(t-s_1)/t +[r(s_1- r)/s_1 + r(s_2- r)/s_2]/(r^2/s_1^2)]/(s_1^2/s_2^2)]/(s_2^2/r^2).
$$
Now, using the notation $I_N=\frac{1}{N}1_{[0, N]}$ and Plancherel's identity, we conclude that
\begin{align*}
 \Phi_N^{(1)}&\le  16  C_{t,4}^2 c_{t,4}^2N^2 f(\R)^2 \int_{[0,t]^2}\d s_1\d s_2\, \frac{t^2}{s_1s_2}\int_0^{s_1\wedge s_2}\d r\, \left( I_N*\tilde{I}_N*\left(f*\bm{p}_{2r(t-r)/t}\left(
	\frac{r}{t} (\cdot)\right)\right)\right)(0)\\
	&=16  C_{t,4}^2 c_{t,4}^2 \frac{N^2}{\pi} f(\R)^2 \int_{[0,t]^2}\d s_1\d s_2\, \frac{t^3}{rs_1s_2}\int_0^{s_1\wedge s_2}\d r\, \int_{\R}\d z\, \frac{1-\cos (Nz)}{N^2z^2}\hat{f}\left(\frac{tz}{r}\right)\e^{-\frac{t(t-r)}{r}z^2}\\
	& \leq 16  C_{t,4}^2 c_{t,4}^2\frac{N}{\pi} f(\R)^3 \int_{[0,t]^2}\d s_1\d s_2\, \frac{t^3}{rs_1s_2}\int_0^{s_1\wedge s_2}\d r\, \int_{\R}\d z\, \frac{1-\cos (z)}{z^2}\e^{-\frac{t(t-r)}{r}\frac{z^2}{N^2}}\\
& =32  C_{t,4}^2 c_{t,4}^2 \frac{N}{\pi} f(\R)^3 \int_{0 \leq s_1\leq s_2\leq t}\d s_1\d s_2\d r\, \frac{t^3}{rs_1s_2}\int_0^{s_1\wedge s_2}\d r\, \int_{\R}\d z\, \frac{1-\cos (z)}{z^2}\e^{-\frac{t(t-r)}{r}\frac{z^2}{N^2}}.
  \end{align*}
  Integrating in the variables $s_1$ and $ s_2$ yields
    \[
      \Phi_N^{(1)}  \le 32  C_{t,4}^2 c_{t,4}^2 \frac {N}{\pi}f(\R)^3
      \int_0^t   \d r  \frac {t^3}{r }    \left( \log \left(\frac tr\right) \right)^2    \int_{\R}  \e^{-\frac { t(t-r)}r \frac { z^2} {N^2}}  \varphi(z) \d z,
    \]
    where we recall that $\varphi(z)= (1-\cos z)/z^2$.
    Making the change of variables $\frac {t-r} r =\theta$, allows us to write
       \[
      \Phi_N^{(1)}  \le 32  C_{t,4}^2 c_{t,4}^2 \frac {N}{\pi}   t^3f(\R)^3 \int_{\R}  \varphi(z) \d z
      \int_0^\infty   \d \theta \frac 1{\theta +1}   \left( \log (\theta +1) \right)^2     \e^{-\frac { t\theta  z^2} {N^2}}   .
    \]
    Integrating by parts and using the fact that 
    \[
\left(\frac 13 ( \log(\theta+1))^3    \e^{-\frac { t\theta  z^2} {N^2}}  \right)  _{\theta=0} ^{\theta=\infty} =0,
\]
we obtain
           \begin{align*}
      \Phi_N^{(1)}&  \le 32  C_{t,4}^2 c_{t,4}^2 \frac {N}{3\pi}   t^3  \int_{\R}   \varphi(z) \d z
      \int_0^\infty   \d \theta   \left( \log (\theta +1) \right)^3     \e^{-\frac { t\theta  z^2} {N^2}}  \frac {tz^2}{N^2}  \\
      &=32  C_{t,4}^2 c_{t,4}^2  \frac {N}{3\pi}   t^3  \int_{\R} \varphi(z) \d z
      \int_0^\infty   \d \theta   \left( \log \left(\frac {N^2} {tz^2}\theta +1\right) \right)^3     \e^{-\theta}.
    \end{align*}
   Using the inequality
   \begin{align*}
   \log \left(\frac {N^2} {tz^2}\theta +1\right) &\le  
  2 \log N + \log(\theta+1) + \log ( \frac 1t+ 1) + \log ( \frac 1 {z^2} +1) \\
  &\le  \left( 2 \log N + \log ( \frac 1t+ 1)\right) \left(1+ \log(\theta+1)+ \log ( \frac 1 {z^2} +1)  \right),
  \end{align*}
  and taking into account that
  \[
 C:=  \int_{\R} \varphi(z) \d z
      \int_0^\infty   \d \theta   \left(1+ \log(\theta+1)+ \log ( \frac 1 {z^2} +1)  \right)^3 e^{-\theta}<\infty,
      \]
      we finally get
      \[
         \Phi_N^{(1)}   \lesssim      C_{t,4}^2 c_{t,4}^2 t^3  N \left( 2 \log N + \log ( \frac 1t+ 1)\right)^3,
         \]
         which provides the desired estimate.

      \medskip

\noindent
{\it Estimation of  $\Phi_N^{(2)}$}.  Recall the estimate in  \eqref{N2}.     Notice that we should not integrate the variables $x_1'$ and $x_2'$ on the whole real line, because this would produce a factor
  $(s_1s_2)^{-1}$ which is not integrable on $[0,r]^2$. For this reason, we choose to integrate the variables $x_1$ and $x_2$ on $\R$ and we obtain,  using \eqref{scale} with $d=1$,
  \begin{align*}
  \Phi_N^{(2)} &\leq C_{t,4}^2 c_{t,4}^2\int_0^t \d r\,  \frac {t^2} {r^2} \int_{[0, r]^2}\d s_1\d s_2\int_{\R^4} f(\d b)\d w f(\d z_1)f(\d z_2)\int_{[0, N]^2}\d x_1' \d x_2' \, \\
  & \qquad \quad  \times  \bm{p}_{s_1(t-s_1)/t + s_1(r-s_1)/r}\left(
	z_1 - \frac{s_1}{t}x_1' + \frac{s_1}{r}w  \right)  \bm{p}_{s_2(t-s_2)/t +s_2(r-s_2)/r}\left(
	z_2 - \frac{s_2}{t}x_2' + \frac{s_2}{r}(w+b) \right)\\
	&= C_{t,4}^2 c_{t,4}^2\int_0^t \d r\,  \frac {t^2} {s_1s_2} \int_{[0, r]^2}\d s_1\d s_2\int_{\R^3} f(\d b) f(\d z_1)f(\d z_2)\int_{[0, N]^2}\d x_1' \d x_2' \, \\
& \qquad \quad   \times \bm{p}_{\alpha}\left(\frac{r}{s_2}z_2 -\frac{r}{s_1}z_1+b - \frac{r}{t}(x_2'- x_1')  \right) \\
	&= C_{t,4}^2 c_{t,4}^2\int_0^t \d r\,  \frac {t^2} {s_1s_2} \int_{[0, r]^2}\d s_1\d s_2\int_{\R^2} f(\d z_1)f(\d z_2)\int_{[0, N]^2}\d x_1' \d x_2' \, \\
	& \qquad \quad  \times \left(f*\bm{p}_{\alpha}\right)\left(  \frac{r}{t}(x_2'- x_1') +\frac{r}{s_1}z_1 -\frac{r}{s_2}z_2\right),
  \end{align*}
where
\begin{align*}
\alpha = [s_1(t-s_1)/t + s_1(r-s_1)/r]/(s_1^2/r^2) + [s_2(t-s_2)/t +s_2(r-s_2)/r]/(s_2^2/r^2).
\end{align*}

Using $I_N=\frac{1}{N}1_{[0, N]}$, we write
    \begin{align*}
  \Phi_N^{(2)} &\leq N^2C_{t,4}^2 c_{t,4}^2\int_0^t \d r\,  \frac {t^2} {s_1s_2} \int_{[0, r]^2}\d s_1\d s_2\int_{\R^2} f(\d z_1)f(\d z_2) \, \\
  &\qquad \quad \times \left(I_N*\tilde{I}_N*\left(\left(f*\bm{p}_{\alpha}\right)\left(  \frac{r}{t}(\cdot) +\frac{r}{s_1}z_1 -\frac{r}{s_2}z_2\right)\right)\right)(0).
  \end{align*}
 We apply  Plancherel's identity to conclude that 
   \begin{align*}
  \Phi_N^{(2)} &\leq \frac{N^2}{\pi}C_{t,4}^2 c_{t,4}^2\int_0^t \d r\,  \frac {t^3} {rs_1s_2} \int_{[0, r]^2}\d s_1\d s_2\int_{\R^2} f(\d z_1)f(\d z_2)\int_{\R}\d z \,\\
  & \qquad \quad \times \frac{1-\cos(Nz)}{N^2z^2}  \e^{iz \left(\frac{t}{s_2}z_2- \frac{t}{s_1}z_1\right)}\hat{f}\left(\frac{tz}{r}\right)\e^{-\frac{\alpha t^2}{2r^2}z^2}\\
  &= \frac{N}{\pi}C_{t,4}^2 c_{t,4}^2\int_0^t \d r\,  \frac {t^3} {rs_1s_2} \int_{[0, r]^2}\d s_1\d s_2\int_{\R}\d z \,\frac{1-\cos(z)}{z^2} \hat{f}\left(\frac{tz}{s_2}\right)\hat{f}\left(-\frac{tz}{s_1}\right)\hat{f}\left(\frac{tz}{r}\right)\e^{-\frac{\alpha t^2}{2r^2}\frac{z^2}{N^2}}\\
  &\leq \frac{N}{\pi}C_{t,4}^2 c_{t,4}^2 f(\R)^3\int_0^t \d r\,  \frac {t^3} {rs_1s_2} \int_{[0, r]^2}\d s_1\d s_2\int_{\R}\d z \,\frac{1-\cos(z)}{z^2}\e^{-\frac{\alpha t^2}{2r^2}\frac{z^2}{N^2}}.
  \end{align*}
  Denote 
    \[
 \sigma:= \frac{\alpha t^2}{r^2}=t(t-s_1)/s_1+   t^2(r-s_1)/(rs_1)+ t(t-s_2)/s_2+ t^2(r-s_2)/(rs_2).
  \]
 Recalling $\varphi(z)=(1-\cos(z))/z^2$, we can write
      \begin{align*} 
     \Phi_N^{(2)} & \le  \frac N {\pi} 
      C_{t,4}^2 c_{t,4} ^2  f(\R)^3
 \int_{\R} \varphi(z)  \d z     \int_0^t   \int_{[0,r]^2}
    \d s_1 \d s_2 \d r    \frac {t^3} {rs_1s_2}   \e^{ -\frac {\sigma z^2} {2N^2}} \\
    &=  \frac N {\pi}  C_{t,4}^2 c_{t,4} ^2  f(\R)^3
 \int_{\R}  \varphi(z)  \d z     \int_0^t  \d r  \frac {t^3} r  \left(  \int_0^r
    \d s   \frac 1{s}    \e^{ -\frac {[ t(t-s)/s+   t^2(r-s)/(rs) ]  z^2} {2N^2}} \right)^2.
      \end{align*}
Making the change of variables $(r-s)/s =\theta$, yields
  \[
 \int_0^r
    \d s   \frac 1{s}    \e^{ -\frac {[ t(t-s)/s+   t^2(r-s)/(rs) ]  z^2} {2N^2}}
  =\int_0 ^\infty \frac 1 {1+\theta} \e^{-\frac {tz^2}{2N^2} ( 2\theta t +t-r)/r)} \d \theta.
  \]
  As a consequence,
   \begin{align*}
   \Phi_N^{(2)}
   &\le
  \frac N {\pi}    C_{t,4}^2 c_{t,4} ^2  t^3  f(\R)^3
  \int_{\R}   \varphi(z)  \d z 
    \int_0^t  \frac 1r  \e^{- \frac {z^2}{N^2} t(t-r)/r}
  \left(\int_0 ^\infty \frac 1 {1+\theta} \e^{-\frac {t^2z^2\theta}{rN^2} } \d \theta
 \right)^2 \d z \d r.
  \end{align*}
  With the further change of variable $\frac {t-r}r =\xi$, we obtain
    \begin{align*}
   \Phi_N^{(2)}
   &\le
    \frac N {\pi}    C_{t,4}^2 c_{t,4} ^2  t^3 f(\R)^3
  \int_{\R}   \varphi(z)  \d z 
    \int_0^\infty  \frac 1{1+\xi}  e^{- \frac {tz^2 \xi}{N^2} }
  \left(\int_0 ^\infty \frac 1 {1+\theta} \e^{-\frac {t(\xi+1)z^2\theta}{N^2} } \d \theta
 \right)^2 \d z \d \xi \\
 &\le
      \frac N {\pi}    C_{t,4}^2 c_{t,4} ^2  t^3 f(\R)^3
  \int_{\R}   \varphi(z)  \d z 
  \left(\int_0 ^\infty \frac 1 {1+\theta} \e^{-\frac {tz^2\theta}{N^2} } \d \theta
 \right)^3 \d z \\
& =     \frac N {\pi}    C_{t,4}^2 c_{t,4} ^2  t^3 f(\R)^3
  \int_{\R}    \varphi(z) \d z 
  \left(\int_0 ^\infty \frac 1 {\theta+  \frac {t z^2}{N^2}}     \e^{-\theta}   \d \theta
 \right)^3 \d z.
  \end{align*}
  We have
  \begin{align*}
  \int_0 ^\infty \frac 1 {\theta+  \frac {t z^2}{N^2}}     \e^{-\theta}   \d \theta &
  \le 
    \int_1 ^\infty     \e^{-\theta}   \d \theta +
     \int_0 ^1  \frac 1 {\theta+  \frac {t z^2}{N^2}}         \d \theta =  \e^{-1} +
 \log \left( 1+ \frac {N^2} {tz^2} \right)     \\
 & \le \e^{-1} +  2 \log N + \log (1+ 1/t)+ \log (1+  z^{-2}).
\end{align*}
Taking into account that
\[
\int_{\R}    \varphi(z)     (1 + \log (1+  z^{-2}) )^3 \d z <\infty,
\]
we obtain the desired estimate for the term  $   \Phi_N^{(2)}$.
 This completes the proof of the estimate \eqref{var0}.
 \end{proof}

\subsection{Proof of Theorem \ref{TVD2}}

\subsubsection{Estimation of  $\Phi_N^{(1)}$}
Recalling \eqref{1,1} and using change of variables:  $y_1-\frac{s_1}{t}x_1= \alpha_1$, $y_2-\frac{s_2}{t}x_2= \alpha_2$, $y_1+z_1-\frac{s_1}{t}x_1'=\alpha_3$, $y_2+z_2-\frac{s_2}{t}x_2' = \alpha_4$, $b-\frac{r}{s_2}y_2+ \frac{r}{s_1}y_1=\alpha_5$,   yields that
 \begin{align*}
 \Phi_N^{(1)}&\leq 16  C_{t,4}^2 c_{t,4}^2   \int_{[0,t]^2}\d s_1\d s_2 \int_0^{s_1\wedge s_2}\d r\int_{\R^{5d}} \, \d \alpha_1\d \alpha_2\d \alpha_3\d \alpha_4\d \alpha_5 \int_{[0, N]^{4d}}\d x_1\d x_1' \d x_2\d x_2' \,  \nonumber\\ 
   & \quad \quad \times 
    \bm{p}_{s_1(t-s_1)/t}\left(\alpha_1 \right)
   \bm{p}_{s_2(t-s_2)/t}\left(
	\alpha_2 \right)  \bm{p}_{s_1(t-s_1)/t}\left(
\alpha_3\right)  \bm{p}_{s_2(t-s_2)/t}\left(
\alpha_4\right)\bm{p}_{r(s_1- r)/s_1 + r(s_2- r)/s_2}\left(
	\alpha_5 \right) \\
& \quad \quad\times \|\alpha_3-\alpha_1 -\frac{s_1}{t}(x_1-x_1')\|^{-\beta}\|\alpha_4-\alpha_2 -\frac{s_2}{t}(x_2-x_2')\\
& \quad \quad\times \|^{-\beta}\|\alpha_5+\frac{r}{s_2}\alpha_2 -\frac{r}{s_1}\alpha_1+ \frac{r}{t}(x_2-x_1)\|^{-\beta}.
  \end{align*}
 Let $Z_1, Z_2, Z_3, Z_4, Z_5$ be i.i.d. ${\rm N}(0, 1)$. We can write 
 \begin{align}
 \Phi_N^{(1)}&\leq 16  C_{t,4}^2 c_{t,4}^2     \int_{[0,t]^2}\d s_1\d s_2 \int_0^{s_1\wedge s_2}\d r \int_{[0, N]^{4d}}\d x_1\d x_1' \d x_2\d x_2' \,  \nonumber \\ 
 & \quad \quad\times  \E\Bigg[ \|\sqrt{s_1(t-s_1)/t}Z_3-\sqrt{s_1(t-s_1)/t}Z_1 -\frac{s_1}{t}(x_1-x_1')\|^{-\beta}\nonumber \\
  & \quad \quad\times \|\sqrt{s_2(t-s_2)/t}Z_4-\sqrt{s_2(t-s_2)/t}Z_2 -\frac{s_2}{t}(x_2-x_2')\|^{-\beta}\nonumber \\
 & \quad \quad \times \|\sqrt{r(s_1-r)/s_1 + r(s_2-r)/s_2}Z_5+\frac{r}{s_2}\sqrt{s_2(t-s_2)/t}Z_2 \nonumber \\
 & \qquad \qquad \qquad \qquad \qquad \qquad \qquad -\frac{r}{s_1}\sqrt{s_1(t-s_1)/t}Z_1+ \frac{r}{t}(x_2-x_1)\|^{-\beta}\Bigg]\nonumber \\
 &=16  C_{t,4}^2 c_{t,4}^2    N^{4d-3\beta}  \int_{[0,t]^2}\d s_1\d s_2 \int_0^{s_1\wedge s_2}\d r\, \left(\frac{t}{r}\right)^{\beta}\left(\frac{t}{s_1}\right)^{\beta}\left(\frac{t}{s_2}\right)^{\beta} \int_{[0, 1]^{4d}}\d x_1\d x_1' \d x_2\d x_2' \,  \nonumber \\ 
 & \quad \quad\times  \E\Bigg[\|\frac{t}{Ns_1}\sqrt{s_1(t-s_1)/t}Z_3-\frac{t}{Ns_1}\sqrt{s_1(t-s_1)/t}Z_1 -(x_1-x_1')\|^{-\beta}\nonumber \\
  & \quad \quad\times \|\frac{t}{Ns_2}\sqrt{s_2(t-s_2)/t}Z_4-\frac{t}{Ns_2}\sqrt{s_2(t-s_2)/t}Z_2 -(x_2-x_2')\|^{-\beta}\nonumber \\
 & \quad \quad \times \|\frac{t}{Nr}\sqrt{r(s_1-r)/s_1 + r(s_2-r)/s_2}Z_5+\frac{t}{Ns_2}\sqrt{s_2(t-s_2)/t}Z_2\nonumber  \\
 & \qquad \qquad \qquad \qquad \qquad \qquad \qquad -\frac{t}{Ns_1}\sqrt{s_1(t-s_1)/t}Z_1+ (x_2-x_1)\|^{-\beta}\Bigg], \label{1,1=}
  \end{align}
where in the second equality we have made a change of variables. 

\medskip

\noindent \textbf{Case 1}: $0<\beta <1$. Appealing to Lemma 3.1 of \cite{HNVZ2019} to the random variables $Z_5, Z_4, Z_3$ in this order, we see that the spatial integral  in \eqref{1,1=} is bounded above by 
\begin{align*}
&C\, \E\Bigg[\int_{[0, 1]^{4d}}\d x_1\d x_1' \d x_2\d x_2'\, \|\frac{t}{Ns_2}\sqrt{s_2(t-s_2)/t}Z_2-\frac{t}{Ns_1}\sqrt{s_1(t-s_1)/t}Z_1+ (x_2-x_1)\|^{-\beta}
\\
&\quad \quad \times \|\frac{t}{Ns_1}\sqrt{s_1(t-s_1)/t}Z_1 +(x_1-x_1')\|^{-\beta}
\|\frac{t}{Ns_2}\sqrt{s_2(t-s_2)/t}Z_2 +(x_2-x_2')\|^{-\beta} \Bigg]\\
&\quad \leq C\, \E\Bigg[\int_{[-1, 1]^{3d}}\d y_1\d y_2 \d y_3\, \|\frac{t}{Ns_2}\sqrt{s_2(t-s_2)/t}Z_2-\frac{t}{Ns_1}\sqrt{s_1(t-s_1)/t}Z_1+ y_3\|^{-\beta}
\\
&\quad \quad \qquad \times \|\frac{t}{Ns_1}\sqrt{s_1(t-s_1)/t}Z_1 +y_1\|^{-\beta}
\|\frac{t}{Ns_2}\sqrt{s_2(t-s_2)/t}Z_2 +y_2\|^{-\beta} \Bigg]\\
& \quad \leq C\, \left( \sup_{z\in \R^d} \int_{[-1, 1]^d}\|z+y\|^{-\beta}\d y\right)^3=C'<\infty,
\end{align*}
where in the first inequality we use a change of variables and in the second inequality we use the fact (see also \cite[(3.10)]{HNVZ2019})
\begin{align}\label{sup}
\sup_{z\in \R^d} \int_{[-1, 1]^d}\|z+y\|^{-\beta}\d y <\infty. 
\end{align}

Denote 
\begin{align*}
A_t=\int_{[0,t]^2}\d s_1\d s_2 \int_0^{s_1\wedge s_2}\d r\, \left(\frac{t}{r}\right)^{\beta}\left(\frac{t}{s_1}\right)^{\beta}\left(\frac{t}{s_2}\right)^{\beta}.
\end{align*}
Condition  $\beta <1$ implies $A_t<\infty$. Therefore, in the case $0<\beta <1$, we conclude that 
\begin{align}\label{1case1}
 \Phi_N^{(1)}&\leq C'C_{t,4}^2 c_{t,4}^2   A_t\, N^{4d-3\beta}.
 \end{align}

\medskip

\noindent \textbf{Case 2}: $1< \beta <2$. Recall \eqref{1,1=}. Applying Lemma \ref{max} to $Z_5, Z_4, Z_3$, using  the change of variables ($x_1'=x_1-y_1$, $x_2'=x_2-y_2$, $x_1=x_2-y_3$) and the fact that for all $c_1 >0$ and $z\in \R^d$
\begin{align}\label{comparemin}
\int_{[-1, 1]^d} c_1 \wedge \|z + y_1\|^{-\beta}\d y_1 &\leq 2^d \left(c_1 \wedge \int_{[-1, 1]^d}  \|z + y_1\|^{-\beta}\d y_1\right)\nonumber\\
& \lesssim c_1\wedge 1,  \qquad \text{see \eqref{sup}}
\end{align} 
we obtain that 
\begin{align*} 
 \Phi_N^{(1)}
 &\lesssim C_{t,4}^2 c_{t,4}^2N^{4d-3\beta}  \int_{[0,t]^2}\d s_1\d s_2 \int_0^{s_1\wedge s_2}\d r\,  
  \left[\left[N^{\beta} (s_1(t-s_1)/t)^{-\beta/2}\right]\wedge \left(\frac{t}{s_1}\right)^{\beta} \right] \\
& \qquad \times  \left[\left[N^{\beta} (s_2(t-s_2)/t)^{-\beta/2}\right]\wedge \left(\frac{t}{s_2}\right)^{\beta} \right]
\left[\left[N^{\beta} (r(s_1-r)/s_1 + r(s_2-r)/s_2)^{-\beta/2}\right]\wedge \left(\frac{t}{r}\right)^{\beta} \right].
\end{align*}
The change of variables $s_1\to \frac{ts_1}{N^2}$, $s_2\to \frac{ts_2}{N^2}$ and $r\to \frac {tr}{N^2}$ allows us to write
\begin{align*} 
 \Phi_N^{(1)}
 &\lesssim  N^{4d+3\beta-6}  \int_{[0,N]^2}\d s_1\d s_2 \int_0^{s_1\wedge s_2}\d r\,  
  \left[\left[ (s_1(1-s_1/N^2) )^{-\beta/2}\right]\wedge  s_1^{-\beta} \right] \\
& \qquad \times   \left[\left[ (s_2(1- s_2/N^2) )^{-\beta/2}\right]\wedge s_2^{-\beta} \right]
\left[ \left[ r^{-\beta/2} \left( 1- \frac r{2s_1} -\frac r{2s_2}\right )^{-\beta/2}\right]\wedge  r^{-\beta} \right].
\end{align*}
For the  integral in the variable $r$ we make the further change of variables  $r \left( \frac 1{2s_1} + \frac 1{2s_2} \right) =\lambda$ in order to obtain
         \begin{align}
       \Phi_N^{(1)}
     &\lesssim  N^{4d+3\beta-6}    \int_{ [0, N^2]^2}\d s_1\d s_2 \,  
        \left[\left[ (s_1(1-s_1/N^2))^{-\beta/2}\right]\wedge s_1^{-\beta} \right] \left[\left[ (s_2(1-s_2/N^2))^{-\beta/2}\right]\wedge s_2^{-\beta} \right]\nonumber \\
     & \qquad \times 
     \left(\frac{1}{2s_1}+ \frac{1}{2s_2}\right)^{\beta-1}
          \int_0^{1}\d \lambda\,  \lambda^{-\frac{\beta}{2}}       
      \left( \left[\left(\frac{1}{2s_1}+ \frac{1}{2s_2}\right)^{-\beta/2} \left(1-\lambda\right)^{-\frac{\beta}{2}}\right]
          \wedge \lambda^{-\frac{\beta}{2}}  \right) . \label{change}
               \end{align}

          From    \eqref{change},  we apply Lemma \ref{beta>1} to conclude that in the case $1<\beta<2$,                
                       \begin{align}
       \Phi_N^{(1)}
     &\lesssim   N^{4d+3\beta-6} . \label{1case2}
               \end{align}

\medskip 
\noindent 
\textbf{Case 3}: $\beta =1$. Notice that the estimate in \eqref{change} still holds for $\beta=1$. Now we apply Lemma \ref{beta=1} to conclude that in the case $\beta=1$
                       \begin{align}
   \Phi_N^{(1)}
     &\lesssim   N^{4d-3}(\log N)^3 . \label{1case3}
            \end{align}

\subsubsection{Estimation of  $\Phi_N^{(2)}$}

 Recall \eqref{N2}. Using  the change of variables: $w -\frac{r}{t}x_1=\alpha_1$, 
$w+b-\frac{r}{t}x_2 =\alpha_2$,  $z_1 -\frac{s_1}{t}x_1' +\frac{s_1}{r}w = \alpha_3$, $z_2-\frac{s_2}{t}x_2' + \frac{s_2}{r}(w+b)=\alpha_4$,  we obtain  
\begin{align}
  \Phi_N^{(2)} &\leq C_{t,4}^2 c_{t,4}^2\int_0^t \d r\int_{[0, r]^2}\d s_1\d s_2
  \int_{\R^{4d}}\d \alpha_1\d \alpha_2 \d \alpha_3\d \alpha_4
  \int_{[0, N]^{4d}}\d x_1\d x_1' \d x_2\d x_2' \, \nonumber \\
  & \qquad  \times   \bm{p}_{r(t-r)/t}\left(\alpha_1 \right) 
  \bm{p}_{r(t-r)/t}\left(\alpha_2\right)
  \bm{p}_{s_1(t-s_1)/t + s_1(r-s_1)/r}\left(\alpha_3 \right)
   \bm{p}_{s_2(t-s_2)/t + s_2(r-s_2)/r}\left(\alpha_4\right) \nonumber\\
   &\qquad \times \left\| \alpha_2-\alpha_1 +\frac{r}{t}(x_2-x_1) \right\|^{-\beta}
  	\left\| \alpha_3 -\frac{s_1}{r}\alpha_1 + \frac{s_1}{t}(x_1'-x_1) \right\|^{-\beta}
	\left\| \alpha_4-\frac{s_2}{r}\alpha_2 + \frac{s_2}{t}(x_2'-x_2) \right\|^{-\beta}\nonumber\\
    &= C_{t,4}^2 c_{t,4}^2\int_0^t \d r\int_{[0, r]^2}\d s_1\d s_2
   \int_{[0, N]^{4d}}\d x_1\d x_1' \d x_2\d x_2' \,  \nonumber \\
 &\qquad\qquad  \times  \E\Bigg[
	\left\| \sqrt{\frac{r(t-r)}{t}}\,
	Z_2- \sqrt{\frac{r(t-r)}{t}}\,Z_1 + \frac{r}{t}(x_2-x_1) \right\|^{-\beta} \nonumber \\
& \qquad\qquad  \times 
	\left\| \sqrt{\frac{s_1(t-s_1)}{t} + \frac{s_1(r-s_1)}{r}}\,Z_3- 
	\frac{s_1}{t}\sqrt{\frac{r(t-r)}{t}}\,Z_1 
	+ \frac{s_1}{t}(x_1'-x_1) \right\|^{-\beta} \nonumber \\
 & \qquad\qquad  \times 
	\left\| \sqrt{\frac{s_2(t-s_2)}{t} + \frac{s_2(r-s_2)}{r}}\,Z_4- 
	\frac{s_2}{t}\sqrt{\frac{r(t-r)}{t}}\,Z_2 + \frac{s_2}{t}(x_2'-x_2) \right\|^{-\beta}\Bigg]. \nonumber 
			  \end{align}
Now, using a change of variables  yields that 
\begin{align}
  \Phi_N^{(2)} &\leq  C_{t,4}^2 c_{t,4}^2N^{4d-3\beta}\int_0^t \d r\int_{[0, r]^2}\d s_1\d s_2
  \left(\frac{t}{r}\right)^{\beta} \left(\frac{t}{s_1}\right)^{\beta} \left(\frac{t}{s_2}\right)^{\beta}
   \int_{[0, 1]^{4d}}\d x_1\d x_1' \d x_2\d x_2' \, \nonumber \\
 &\qquad  \times   \E\Bigg[ 
	\left\| \frac{t}{Nr}\sqrt{\frac{r(t-r)}{t}}\,Z_2- \frac{t}{Nr}\sqrt{\frac{r(t-r)}{t}}\,Z_1 + (x_2-x_1)
	\right\|^{-\beta} \nonumber \\
& \qquad  \times 
	\left\| \frac{t}{Ns_1}\sqrt{\frac{s_1(t-s_1)}{t} + \frac{s_1(r-s_1)}{r}}\,Z_3
	- \frac{1}{N}\sqrt{\frac{r(t-r)}{t}}\,Z_1 + (x_1'-x_1) \right\|^{-\beta} \nonumber \\
 & \qquad \times 
	\left\| \frac{t}{Ns_2}\sqrt{\frac{s_2(t-s_2)}{t} + \frac{s_2(r-s_2)}{r}}\,Z_4
	- \frac{1}{N}\sqrt{\frac{r(t-r)}{t}}\,Z_2 + (x_2'-x_2) \right\|^{-\beta}\Bigg] . \label{N22}
			  \end{align}

\noindent
\textbf{Case 1}: $0<\beta<1$. We first apply  Lemma 3.1 of \cite{HNVZ2019} for $Z_4, Z_3$, then use a change of variables and \eqref{sup} to conclude 
\begin{align}
  \Phi_N^{(2)} &\leq  C\, \bar{A}_tC_{t,4}^2 c_{t,4}^2N^{4d-3\beta}, \label{2case1}
\end{align}
where $\bar{A}_t= \int_0^t \d r\int_{[0, r]^2}\d s_1\d s_2\
  (t/r)^{\beta} (t/s_1)^{\beta} (t/s_2)^{\beta} < \infty$ since $\beta<1$.

\medskip 

\noindent
\textbf{Case 2}: $1< \beta < 2$. In this case, recalling  \eqref{N22},  we proceed in the following order:  applying Lemma
 \ref{max} for $Z_4, Z_3$, using the change of variables ($x_1'=y_1+ x_1$, $x_2'=y_2+ x_2$) and \eqref{comparemin}, then applying Lemma
 \ref{max} for $Z_2$ and using change of variables $x_2=y_3+ x_1$ and \eqref{comparemin}, to obtain that 
\begin{align}
  \Phi_N^{(2)} &\lesssim C_{t,4}^2 c_{t,4}^2N^{4d-3\beta}\int_0^t \d r\int_{[0, r]^2}\d s_1\d s_2\,
  \left[\left[N^\beta (r(t-r)/t)^{-\beta/2}\right] \wedge (t/r)^{\beta}\right]
  \nonumber \\
  & \qquad\qquad \times  \left[\left[N^\beta (s_1(t-s_1)/t + s_1(r-s_1)/r)^{-\beta/2}\right] \wedge (t/s_1)^{\beta}\right]
  \nonumber \\
  & \qquad \qquad \times  \left[\left[N^\beta (s_2(t-s_2)/t + s_2(r-s_2)/r)^{-\beta/2}\right] \wedge (t/s_2)^{\beta}\right]
  \nonumber \\
 &\lesssim N^{4d-3\beta}\int_0^1 \d r\int_{[0, r]^2}\d s_1\d s_2\,
  \left[\left[N^\beta (r(1-r))^{-\beta/2}\right] \wedge r^{-\beta}\right]
  \nonumber \\
  & \qquad\qquad \qquad  \times  \left[\left[N^\beta (s_1(r-s_1)/r)^{-\beta/2}\right] \wedge s_1^{-\beta}\right]
  \left[\left[N^\beta ( s_2(r-s_2)/r)^{-\beta/2}\right] \wedge s_2^{-\beta}\right]\nonumber \\
  & =  N^{4d-3\beta}\int_0^1 \d r\, 
  \left[\left[N^\beta (r(1-r))^{-\beta/2}\right] \wedge r^{-\beta}\right]
 \left[\int_{0}^r    \left[\left[N^\beta ( s(r-s)/r)^{-\beta/2}\right] \wedge s^{-\beta}\right]\d s\right]^2,
  \label{N223}
			  \end{align}
where the second inequality follows by a change of variables.  Using a change of variables again, we see from \eqref{N223}
 that 
 \begin{align}
  \Phi_N^{(2)} &\lesssim N^{4d-3\beta}\int_0^1 \d r\, 
  \left[\left[N^\beta (r(1-r))^{-\beta/2}\right] \wedge r^{-\beta}\right] r^{2-2\beta}\nonumber\\
 & \qquad \qquad \times \left[ \int_{0}^1    \left[\left[(Nr^{1/2})^\beta ( s(1-s))^{-\beta/2}\right] \wedge s^{-\beta}\right]\d s\right]^2\nonumber\\
 & = 2N^{4d+3\beta-6}\int_0^N\d \alpha\,  \alpha^{5-4\beta}
 \left[\left[ (\alpha^2(1-{\alpha^2}/N^2))^{-\beta/2}\right] \wedge \alpha^{-2\beta}\right] \nonumber\\
  & \qquad \qquad \times \left[ \int_{0}^1    \left[\left[\alpha^\beta ( s(1-s))^{-\beta/2}\right] \wedge s^{-\beta}\right]\d s\right]^2.
  \label{change2}
			  \end{align}

We apply Lemma \ref{2beta>1} to conclude that in the case $1<\beta<2$, 
\begin{align}
  \Phi_N^{(2)} \lesssim N^{4d+3\beta-6}.
  \label{2case2}
			  \end{align}

\medskip
\noindent
\textbf{Case 3}: $\beta=1$. Notice that the estimate in \eqref{change2} still holds for $\beta=1$. We apply Lemma \ref{2beta=1} to conclude that in the case $\beta=1$
\begin{align}
  \Phi_N^{(2)} \lesssim N^{4d-3}(\log N)^3.
  \label{2case3}
			  \end{align}

\subsubsection{Proof of Theorem \ref{TVD2}}

\begin{proof}[Proof of Theorem \ref{TVD2}]
           Recall   \eqref{dTV2} and \eqref{1+2}.  The case $0<\beta<1$ follows from Theorem \ref{Riesz} Item 1,  \eqref{1case1} and \eqref{2case1}; the case $\beta=1$ follows from Theorem \ref{Riesz} Item 2,  \eqref{1case3} and \eqref{2case3}; the case $1<\beta<2$ follows from Theorem \ref{Riesz} Item 3,  \eqref{1case2} and \eqref{2case2}.
\end{proof}

\section{Appendix}

\begin{lemma}\label{identity}
       Let $I_N$ and $\tilde{I}_N$ be defined in \eqref{I_N}. Then for all $s<t$ and $w\in \R^d$,
        \begin{align}
      &   \int_{\R^d} \d x
    \left(I_N*\tilde{I}_N\right)(x)
   \left( f*\bm{p}_{2s(t-s)/t} \right)
     \left(\frac st x +   w\right) \nonumber\\
     & \quad = \frac{1}{\pi^d} \int_{\R^d}\e^{-s(t-s)\|z\|^2/t} \prod_{j=1}^d\frac{1-\cos(Nz_js/t)}{(Nz_js/t)^2}
     \e^{i z\cdot w}\hat{f}(\d z) \label{equal}\\
     & \quad \leq  \frac{1}{\pi^d} \int_{\R^d}\e^{-s(t-s)\|z\|^2/t}
      \prod_{j=1}^d\frac{1-\cos(Nz_js/t)}{(Nz_js/t)^2} \hat{f}(\d z). \label{inequality}
        \end{align}
\end{lemma}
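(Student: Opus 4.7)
The plan is to apply the Fourier representation \eqref{Fourier} to the factor $f*\bm{p}_{2s(t-s)/t}$ and recognize the resulting $x$-integral as a Fourier transform of $I_N*\tilde{I}_N$. Specifically, applying \eqref{Fourier} with $r=2s(t-s)/t$ gives
\[
	\bigl(f*\bm{p}_{2s(t-s)/t}\bigr)\bigl(\tfrac{s}{t}x+w\bigr)
	= \frac{1}{(2\pi)^d}\int_{\R^d} e^{-s(t-s)\|z\|^2/t} e^{i z\cdot((s/t)x+w)}\hat{f}(\d z).
\]
Since $I_N*\tilde{I}_N$ is bounded and compactly supported, and since $\int e^{-s(t-s)\|z\|^2/t}\hat{f}(\d z)<\infty$ by Dalang's condition \eqref{f:finite} (this is essentially $\Upsilon(\beta)<\infty$ after scaling, see Lemma \ref{lem:p*f}), Fubini's theorem applies and I may interchange the order of integration in $x$ and $z$. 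The inner $x$-integral is then
\[
	\int_{\R^d} (I_N*\tilde{I}_N)(x)\,e^{i(s/t)z\cdot x}\,\d x
	= \widehat{I_N*\tilde{I}_N}\bigl(sz/t\bigr) = \bigl|\hat{I}_N(sz/t)\bigr|^2.
\]

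Next, I would compute $\hat{I}_N$ directly: with the Fourier convention of the paper,
\[
	\hat{I}_N(\xi) = N^{-d}\int_{[0,N]^d}\!e^{i\xi\cdot x}\,\d x = N^{-d}\prod_{j=1}^d\frac{e^{iN\xi_j}-1}{i\xi_j},
\]
hence $|\hat{I}_N(\xi)|^2 = 2^d \prod_{j=1}^d (1-\cos(N\xi_j))/(N\xi_j)^2$. Specializing to $\xi=sz/t$ and collecting the factor $2^d/(2\pi)^d = \pi^{-d}$ produces exactly the right-hand side of \eqref{equal}.

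For \eqref{inequality}, I would invoke the Bochner--Schwartz theorem: since $f$ is a real, nonnegative-definite tempered measure, $\hat{f}$ is a nonnegative (symmetric) tempered measure, and the remaining factors $e^{-s(t-s)\|z\|^2/t}\prod_j(1-\cos(Nsz_j/t))/(Nsz_j/t)^2$ are all nonnegative. Bounding $|e^{iz\cdot w}|\le 1$ (or equivalently using the symmetry $z\mapsto -z$ to see that only the real part $\cos(z\cdot w)\le 1$ survives) gives \eqref{inequality}.

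The only point requiring any care is the Fubini interchange, but this is straightforward: the $x$-integration is over a compact set of uniformly bounded integrand in $x$, and in $z$ the Gaussian kernel combined with Dalang's condition provides absolute integrability. Everything else is a routine Fourier computation, so I do not expect a serious obstacle in carrying out this plan.
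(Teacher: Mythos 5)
Your proposal is correct and follows essentially the same route as the paper: apply the Fourier representation \eqref{Fourier}, interchange the $x$- and $z$-integrals, and identify $\widehat{I_N*\tilde{I}_N}(\xi)=2^d\prod_{j}(1-\cos(N\xi_j))/(N\xi_j)^2$. Your version simply spells out the Fubini justification, the computation of $\hat{I}_N$, and the nonnegativity of $\hat f$ (needed for \eqref{inequality}) that the paper leaves implicit.
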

\begin{proof}
       Clearly, it suffices to prove  \eqref{equal}, which is a consequence of the identity \eqref{Fourier} and  the fact that 
       the Fourier transform of $I_N*\tilde{I}_N$ is $2^d\prod_{j=1}^d\frac{1-\cos(Nz_j)}{(Nz_j)^2}$.
\end{proof}
\begin{lemma}\label{max}
Let $\Z\sim {\rm N}(0, 1)$. There exists a constant $C>0$ such that for all $s>0$ and $y\in \R^d$
\begin{align}\label{min}
\int_{\R^d}\bm{p}_s(x+y)\|x\|^{-\beta}\d x& =\E\left[\| \sqrt{s}\Z +y\|^{-\beta}\right] 
\leq C\,  \left(s^{-\beta/2} \wedge \|y\|^{-\beta}\right).
\end{align}
\end{lemma}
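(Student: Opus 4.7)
The identity $\E[\|\sqrt{s}\Z + y\|^{-\beta}] = \int_{\R^d}\bm{p}_s(x+y)\|x\|^{-\beta}\d x$ is an immediate rewriting, so the content is in the upper bound. My plan is to first rescale, then split into two regimes depending on the size of $w:=y/\sqrt{s}$. By the scaling property of the Gaussian, $\E[\|\sqrt s\Z+y\|^{-\beta}] = s^{-\beta/2}\E[\|\Z+w\|^{-\beta}]$, so the claim reduces to showing a universal constant $C$ with
\[
\E\left[\|\Z+w\|^{-\beta}\right]\le C\left(1\wedge \|w\|^{-\beta}\right)\qquad\text{for all }w\in\R^d.
\]
Multiplying by $s^{-\beta/2}$ and noting that $s^{-\beta/2}(1\wedge\|w\|^{-\beta})=s^{-\beta/2}\wedge\|y\|^{-\beta}$ yields \eqref{min}.

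For the bound by $C$ uniformly (covering the case $\|w\|\le 2$), I would write
\[
\E\left[\|\Z+w\|^{-\beta}\right]=\int_{\R^d}\bm{p}_1(x-w)\|x\|^{-\beta}\,\d x
\]
and split the spatial integral according to $\|x\|\le 1$ or $\|x\|>1$. On $\{\|x\|\le 1\}$, use $\bm{p}_1\le(2\pi)^{-d/2}$ together with the local integrability $\int_{\|x\|\le 1}\|x\|^{-\beta}\d x<\infty$, valid since $\beta<d$. On $\{\|x\|>1\}$, simply bound $\|x\|^{-\beta}\le 1$ and use $\int\bm{p}_1\le1$. This delivers $\sup_w\E[\|\Z+w\|^{-\beta}]\le C_1<\infty$.

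For the decay by $\|w\|^{-\beta}$ (the interesting regime $\|w\|>2$), I would split according to whether the Gaussian is close to or far from $-w$:
\[
\E[\|\Z+w\|^{-\beta}]=\E[\|\Z+w\|^{-\beta};\|\Z\|\le\tfrac12\|w\|]+\E[\|\Z+w\|^{-\beta};\|\Z\|>\tfrac12\|w\|].
\]
On the first event, the reverse triangle inequality gives $\|\Z+w\|\ge\|w\|/2$, so that contribution is at most $2^\beta\|w\|^{-\beta}$. For the second event, I would apply Hölder's inequality with exponent $p>1$ chosen so that $p\beta<d$ (which is possible since $\beta<d$), obtaining
\[
\E[\|\Z+w\|^{-\beta};\|\Z\|>\tfrac12\|w\|]\le\left(\E[\|\Z+w\|^{-p\beta}]\right)^{1/p}\P\left(\|\Z\|>\tfrac12\|w\|\right)^{1/q},
\]
where $1/p+1/q=1$. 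The first factor is bounded uniformly in $w$ by the argument of the preceding paragraph (applied with $p\beta$ in place of $\beta$), and the Gaussian tail bound $\P(\|\Z\|>\|w\|/2)\lesssim\exp(-\|w\|^2/8)$ makes this term decay faster than any polynomial, hence is $O(\|w\|^{-\beta})$ for $\|w\|>2$.

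The only mild subtlety is the Hölder step: one must verify that such a $p>1$ exists, which uses the hypothesis $\beta<d$ implicit in the paper's setting (e.g.\ for the Riesz kernel, $\beta<2\wedge d$). Once the two pieces are combined, one obtains $\E[\|\Z+w\|^{-\beta}]\le C\|w\|^{-\beta}$ for $\|w\|>2$, and together with the uniform bound $\le C_1$ for $\|w\|\le 2$ (where $\|w\|^{-\beta}\ge 2^{-\beta}$), this yields the desired inequality $\E[\|\Z+w\|^{-\beta}]\lesssim 1\wedge\|w\|^{-\beta}$, completing the proof after rescaling.
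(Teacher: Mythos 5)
Your proof is correct, and it takes a genuinely different route from the paper's. The paper's argument is a two-liner: since $\bm{p}_s$ and $\|\cdot\|^{-\beta}$ are both nonnegative definite, so is their convolution, so the convolution $\int\bm{p}_s(x+y)\|x\|^{-\beta}\d x$ attains its supremum over $y$ at $y=0$, which by scaling is $s^{-\beta/2}\int\bm{p}_1\|\cdot\|^{-\beta}$; the complementary bound $\lesssim\|y\|^{-\beta}$ is simply cited as Lemma~3.1 of the reference \cite{HNVZ2019}. Your approach instead rescales to $w=y/\sqrt s$, proves the uniform bound directly by splitting the spatial integral at $\|x\|=1$, and proves the $\|w\|^{-\beta}$ decay by partitioning on the event $\{\|\Z\|\le\|w\|/2\}$, with the reverse triangle inequality handling one part and H\"older plus the Gaussian tail handling the other. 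The trade-off is clear: your argument is entirely self-contained and elementary (at the modest cost of the H\"older step requiring a $p>1$ with $p\beta<d$, which you correctly note is available because $\beta<d$), while the paper's is shorter but leans on the nonnegative-definiteness observation and an external reference for the harder half. Both arguments are sound; one small remark is that you don't even need H\"older —— on $\{\|\Z\|>\|w\|/2\}$ one could just use Cauchy--Schwarz directly as long as $2\beta<d$, or appeal once more to your uniform bound after noting that the indicator is $\le 1$ and estimating $\E[\|\Z+w\|^{-\beta}\mathbf{1}_{\|\Z\|>\|w\|/2}]$ by Cauchy--Schwarz for general $\beta<d$ via a second exponent —— but the H\"older step as you wrote it works without change.
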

\begin{proof}
Since the convolution between $\bm{p}_s$ and $\|\cdot\|^{-\beta}$ is nonnegative definite and maximized at $0$,  using a change of variable we can write
\begin{align*}
\sup_{y\in \R^d}\int_{\R^d}\bm{p}_s(x+y)\|x\|^{-\beta}\d x = \int_{\R^d}\bm{p}_s(x)\|x\|^{-\beta}\d x=s^{-\beta/2}\int_{\R^d}\bm{p}_1(x)\|x\|^{-\beta}\d x.
\end{align*}
 This together with Lemma 3.1 of \cite{HNVZ2019} implies \eqref{min}.
\end{proof}

\begin{lemma}\label{smallbig}
          Fix $1\leq \beta <2$.
          Then we have for all $\alpha>0$,
          \begin{align}\label{intmin}
          \int_0^1 \lambda^{-\beta/2}\left(\left[\alpha^{\beta}(1-\lambda)^{-\beta/2}\right]\wedge \lambda^{-\beta/2}\right)
           \d \lambda \asymp
           \begin{cases}
            \bm{1}_{\{0<\alpha<1\}}\alpha + \bm{1}_{\{\alpha\geq 1\}}\log \alpha, & \beta=1,\\
           \bm{1}_{\{0<\alpha<1\}}\alpha^{\beta} + \bm{1}_{\{\alpha\geq 1\}}\alpha^{2\beta-2}, & 1<\beta<2.
           \end{cases}
          \end{align}
\end{lemma}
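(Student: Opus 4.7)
The natural approach is to locate the point where the two arguments of the minimum coincide. Solving $\alpha^{\beta}(1-\lambda)^{-\beta/2} = \lambda^{-\beta/2}$ yields the threshold $\lambda^\ast := (1+\alpha^2)^{-1}$, and for $\lambda \le \lambda^\ast$ the minimum is $\alpha^{\beta}(1-\lambda)^{-\beta/2}$, while for $\lambda \ge \lambda^\ast$ it is $\lambda^{-\beta/2}$. Splitting the integral accordingly, I would write
\[
I(\alpha) \;=\; \alpha^{\beta}\int_0^{\lambda^\ast}\lambda^{-\beta/2}(1-\lambda)^{-\beta/2}\,\d\lambda \;+\; \int_{\lambda^\ast}^1 \lambda^{-\beta}\,\d\lambda \;=:\; A(\alpha)+B(\alpha),
\]
and estimate the two pieces separately in the four subcases $\alpha < 1$ vs $\alpha \ge 1$ and $\beta=1$ vs $1<\beta<2$.

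The term $B(\alpha)$ is evaluated explicitly: $B(\alpha) = \log(1+\alpha^2)$ if $\beta=1$, and $B(\alpha) = \frac{(1+\alpha^2)^{\beta-1}-1}{\beta-1}$ if $1<\beta<2$. Using $1+\alpha^2 \asymp 1$ when $\alpha<1$ and $1+\alpha^2 \asymp \alpha^2$ when $\alpha\ge 1$, the elementary inequalities $\log(1+u) \asymp u$ for $0<u<1$ and $\log(1+u) \asymp \log u$ for $u\ge 1$ give $B(\alpha)\asymp \alpha^2$ (resp.\ $\log\alpha$) for $\beta=1$ in the two regimes, and $B(\alpha)\asymp \alpha^2$ (resp.\ $\alpha^{2\beta-2}$) for $1<\beta<2$.

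For the term $A(\alpha)$ I would distinguish the regimes. When $\alpha\ge 1$, we have $\lambda^\ast\le 1/2$, so $(1-\lambda)^{-\beta/2}$ is bounded above and below by positive constants on $[0,\lambda^\ast]$; hence
\[
A(\alpha) \;\asymp\; \alpha^{\beta}\int_0^{\lambda^\ast}\lambda^{-\beta/2}\,\d\lambda \;\asymp\; \alpha^{\beta}(\lambda^\ast)^{1-\beta/2} \;\asymp\; \alpha^{\beta}\cdot\alpha^{\beta-2} \;=\; \alpha^{2\beta-2}.
\]
When $\alpha<1$, the interval $[0,\lambda^\ast]$ is almost all of $[0,1]$ with $1-\lambda^\ast\asymp \alpha^2$; since $\beta<2$ the function $\lambda^{-\beta/2}(1-\lambda)^{-\beta/2}$ is integrable on $[0,1]$, and one checks that the integral $\int_0^{\lambda^\ast}\lambda^{-\beta/2}(1-\lambda)^{-\beta/2}\,\d\lambda$ is bounded above by the Beta function $B(1-\beta/2,1-\beta/2)$ and below by $\int_0^{1/2}\lambda^{-\beta/2}\,\d\lambda$; hence it is $\asymp 1$ as $\alpha\to 0$, giving $A(\alpha)\asymp\alpha^{\beta}$.

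Summing $A$ and $B$ in each subcase recovers exactly the claimed asymptotics: when $\alpha\ge 1$ and $\beta=1$, $A\asymp 1$ is negligible compared to $B\asymp\log\alpha$; when $\alpha\ge 1$ and $1<\beta<2$, $A$ and $B$ are both of order $\alpha^{2\beta-2}$; when $\alpha<1$ and $\beta=1$, $A\asymp\alpha$ dominates $B\asymp\alpha^2$; when $\alpha<1$ and $1<\beta<2$, $A\asymp\alpha^{\beta}$ dominates $B\asymp\alpha^2$ since $\beta<2$. The only mildly delicate point is ensuring that the implicit constants remain bounded uniformly in $\beta\in[1,2)$ near the endpoints and as $\alpha\to 1$ across the two regimes; this amounts to checking that the Beta-type integrals are bounded on compact subsets of $(0,2)$ and that the estimates for $A$ match continuously at $\alpha=1$, which is routine.
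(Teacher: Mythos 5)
Your proposal is correct and takes essentially the same approach as the paper's own proof: both locate the crossover at $\lambda^\ast = (1+\alpha^2)^{-1}$, split the integral into the pieces $A(\alpha)$ and $B(\alpha)$, evaluate $B$ in closed form, and estimate $A$ case-by-case using the fact that $(1-\lambda)^{-\beta/2}$ is bounded on $[0,\lambda^\ast]$ when $\alpha\ge1$ and that the Beta-type integral is $\asymp 1$ when $\alpha<1$. The one caveat you raise about behavior near $\alpha=1$ is a real (mild) imprecision, but it is inherited from the lemma's statement itself (since $\log\alpha=0$ at $\alpha=1$ while the integral is a positive constant there), and is present in the paper's proof as well; it does not affect the applications of the lemma.
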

\begin{proof}
        We observe that 
        \begin{align*}
        \alpha^{\beta}(1-\lambda)^{-\beta/2} \leq  \lambda^{-\beta/2}
        \Leftrightarrow  \lambda \leq \frac{1}{1+\alpha^2}.
        \end{align*}
        Hence, 
        \begin{align*}
        &  \int_0^1 \lambda^{-\beta/2}\left(\left[\alpha^{\beta}(1-\lambda)^{-\beta/2}\right]\wedge \lambda^{-\beta/2}\right)
           \d \lambda \\
           & \quad =  \alpha^{\beta}  \int_0^{1/(1+\alpha^2)} \lambda^{-\beta/2}(1-\lambda)^{-\beta/2}\d \lambda + 
             \int_{1/(1+\alpha^2)}^1 \lambda^{-\beta} \d \lambda.
       \end{align*}
       \textbf{Case 1}: $\beta=1$. In this case, for $0<\alpha <1$, 
       \begin{align*}
       \alpha \int_0^{1/(1+\alpha^2)} \lambda^{-1/2}(1-\lambda)^{-1/2}\d \lambda \asymp  \alpha
       \end{align*}
       and 
       \begin{align*}
       \int_{1/(1+\alpha^2)}^1 \lambda^{-1} \d \lambda =\log (1+\alpha^2)\asymp  \alpha^{2}.
       \end{align*}
       On the other hand, for $\alpha \geq 1$, 
       \begin{align*}
       \alpha  \int_0^{1/(1+\alpha^2)} \lambda^{-1/2}(1-\lambda)^{-1/2}\d \lambda 
       \asymp \alpha \int_0^{1/(1+\alpha^2)} \lambda^{-1/2}\d \lambda
       \asymp 1,
       \end{align*}
       and 
        \begin{align*}
       \int_{1/(1+\alpha^2)}^1 \lambda^{-1} \d \lambda =\log (1+\alpha^2)\asymp  \log \alpha.
       \end{align*}
       This proves the first part of \eqref{intmin}.
       
       \medskip
       \noindent 
       \textbf{Case 2}: $1<\beta<2$.. In this case, for $0<\alpha <1$, 
       \begin{align*}
       \alpha^{\beta}  \int_0^{1/(1+\alpha^2)} \lambda^{-\beta/2}(1-\lambda)^{-\beta/2}\d \lambda \asymp  \alpha^{\beta}
       \end{align*}
       and 
       \begin{align*}
       \int_{1/(1+\alpha^2)}^1 \lambda^{-\beta} \d \lambda = \frac{1}{\beta-1}((1+\alpha^2)^{\beta-1} -1)\asymp  \alpha^{2}.
       \end{align*}
       On the other hand, for $\alpha \geq 1$, 
       \begin{align*}
       \alpha^{\beta}  \int_0^{1/(1+\alpha^2)} \lambda^{-\beta/2}(1-\lambda)^{-\beta/2}\d \lambda 
       \asymp \alpha^{\beta}  \int_0^{1/(1+\alpha^2)} \lambda^{-\beta/2}\d \lambda
       \asymp  \alpha^{2\beta-2}.
       \end{align*}
       This proves the second part of \eqref{intmin} and hence completes the proof. 
\end{proof}

\begin{lemma}\label{beta>1}
          Fix $1<\beta< 2$. Then   
          \begin{align}
          \label{1<beta}
         & \sup_{N\geq \e}
           \int_{ [0, N^2]^2}\d s_1\d s_2 \,  
        \left[\left[ (s_1(1-s_1/N^2))^{-\beta/2}\right]\wedge s_1^{-\beta} \right] \left[\left[ (s_2(1-s_2/N^2))^{-\beta/2}\right]\wedge s_2^{-\beta} \right]\nonumber \\
     & \qquad \times 
     \left(\frac{1}{2s_1}+ \frac{1}{2s_2}\right)^{\beta-1}
          \int_0^{1}\d \lambda\,  \lambda^{-\frac{\beta}{2}}       
      \left( \left[\left(\frac{1}{2s_1}+ \frac{1}{2s_2}\right)^{-\beta/2} \left(1-\lambda\right)^{-\frac{\beta}{2}}\right]
          \wedge \lambda^{-\frac{\beta}{2}}  \right) < \infty.
      \end{align}
\end{lemma}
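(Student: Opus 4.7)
The plan is to first evaluate the inner $\lambda$-integral via Lemma \ref{smallbig}, then show that the remaining two-parameter integral in $(s_1, s_2)$ factorizes into a product of integrals that are uniformly bounded in $N$.

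Set $K := \frac{1}{2s_1} + \frac{1}{2s_2}$ and $\alpha := K^{-1/2}$, so that $\left(\frac{1}{2s_1}+ \frac{1}{2s_2}\right)^{-\beta/2} = \alpha^{\beta}$. Then the inner integral in the statement is exactly the one treated in Lemma \ref{smallbig}. Since $1 < \beta < 2$, that lemma gives
\[
\int_0^{1}\lambda^{-\beta/2}\left(\left[\alpha^{\beta}(1-\lambda)^{-\beta/2}\right]\wedge \lambda^{-\beta/2}\right)\d\lambda
\asymp \bm{1}_{\{0<\alpha<1\}}\alpha^{\beta}+\bm{1}_{\{\alpha\ge 1\}}\alpha^{2\beta-2}.
\]
Multiplying this by the prefactor $K^{\beta-1}=\alpha^{-2(\beta-1)}$ yields $\alpha^{2-\beta}$ on $\{\alpha<1\}$ and $1$ on $\{\alpha\ge 1\}$. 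Since $2-\beta>0$ and $\alpha<1$ in the first regime, both quantities are bounded by $1$. Hence there exists an absolute constant $C_\beta$ such that, uniformly in $(s_1,s_2)\in(0,\infty)^2$,
\[
K^{\beta-1}\int_0^{1}\lambda^{-\beta/2}\left(\left[K^{-\beta/2}(1-\lambda)^{-\beta/2}\right]\wedge \lambda^{-\beta/2}\right)\d\lambda \leq C_\beta.
\]

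With this bound in hand, the double integral on the left of \eqref{1<beta} is dominated by
\[
C_\beta\left(\int_0^{N^2} F_N(s)\,\d s\right)^{2},
\qquad F_N(s):=\bigl[(s(1-s/N^2))^{-\beta/2}\bigr]\wedge s^{-\beta}.
\]
A direct comparison shows that $(s(1-s/N^2))^{-\beta/2}\le s^{-\beta}$ precisely when $s\le N^2/(N^2+1)$, so the min transitions near $s=1$. For $s\in[0,1]$ and $N\ge 2$ we have $1-s/N^2\ge 1/2$, giving $F_N(s)\le 2^{\beta/2}s^{-\beta/2}$, which is integrable at $0$ because $\beta/2<1$. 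For $s\in[1,N^2]$ we use $F_N(s)\le s^{-\beta}$, which is integrable at $\infty$ because $\beta>1$. Thus $\sup_{N\ge 2}\int_0^{N^2}F_N(s)\d s<\infty$, and the remaining small values $N\in[\e,2]$ are handled by the same bounds on a bounded interval. Combining these pieces proves \eqref{1<beta}.

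The main (very mild) obstacle is verifying that the prefactor $K^{\beta-1}$ exactly cancels the $\alpha$-growth coming from Lemma \ref{smallbig} in both regimes $\alpha\lessgtr 1$; once this bookkeeping with the exponents is done, the remaining estimates are a routine check of integrability at the two endpoints $s=0$ and $s=N^2$ of the $s$-integral, using $\beta/2<1<\beta<2$.
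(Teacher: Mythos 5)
Your proof is correct, and it takes a genuinely cleaner route than the paper's. You observe that the prefactor $K^{\beta-1}$ (with $K = \tfrac{1}{2s_1}+\tfrac{1}{2s_2}$) exactly cancels the growth of the inner $\lambda$-integral in both regimes of Lemma \ref{smallbig}: on $\{K>1\}$ one gets $K^{\beta/2-1}\le 1$, and on $\{K\le 1\}$ one gets $K^{0}=1$, so the whole inner factor is uniformly bounded. This immediately makes the remaining double integral factor as $\bigl(\int_0^{N^2}F_N\bigr)^2$ with $F_N(s)=[(s(1-s/N^2))^{-\beta/2}]\wedge s^{-\beta}$, and each one-dimensional integral is trivially bounded using $\beta/2<1$ near $s=0$ and $\beta>1$ near $s=\infty$. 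The paper instead carries the $K$-dependence along, splits $[0,N^2]^2$ into four sub-regions ($[0,1]^2$, $[0,1]\times(1,N^2]$, its reflection, and $(1,N^2]^2$), and estimates each separately, e.g.\ bounding $K^{\beta/2-1}\le (2s_1)^{1-\beta/2}$ on $[0,1]^2$. Both proofs rely on Lemma \ref{smallbig} in the same way; yours buys a shorter, symmetric argument and avoids the case analysis, at no loss of generality. (One immaterial slip: you mention handling ``$N\in[\e,2]$'' separately, but since $\e>2$ that interval is empty; your bound $1-s/N^2\ge 1/2$ for $s\le 1$ already holds for all $N\ge\e$.)
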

\begin{proof}
          Applying Lemma \ref{smallbig} (second part) with $\alpha = \left(\frac{1}{2s_1}+ \frac{1}{2s_2}\right)^{-1/2} $,
           for all $N\geq \e$, the above integral is bounded above by  a constant times
          \begin{align*}
         &\int_{ [0, N^2]^2}\d s_1\d s_2 \,  
        \left[\left[ (s_1(1-s_1/N^2))^{-\beta/2}\right]\wedge s_1^{-\beta} \right] \left[\left[ (s_2(1-s_2/N^2))^{-\beta/2}\right]\wedge s_2^{-\beta} \right]\nonumber \\
     &\quad  \times \left(\frac{1}{2s_1}+ \frac{1}{2s_2}\right)^{\beta-1}
     \left(
     \bm{1}_{\{\frac{1}{2s_1}+ \frac{1}{2s_2} >1\}}\left(\frac{1}{2s_1}+ \frac{1}{2s_2}\right)^{-\beta/2} 
     + 
          \bm{1}_{\{\frac{1}{2s_1}+ \frac{1}{2s_2} \leq 1\}}\left(\frac{1}{2s_1}+ \frac{1}{2s_2}\right)^{1-\beta} 
     \right). 
          \end{align*}
          
          For $(s_1, s_2)\in [0, 1]^2$, the above  integrand   is bounded above  by
          \begin{align*}
          s_1^{-\beta/2}(1-1/N^2)^{-\beta/2}s_2^{-\beta/2}(1-1/N^2)^{-\beta/2}
          \left(\frac{1}{2s_1}+ \frac{1}{2s_2}\right)^{\beta/2-1},
          \end{align*}
         whence for all $N\geq \e$ the integral over  $[0, 1]^2$ is bounded  above  by
          \begin{align*}
          &(1-1/\e^2)^{-\beta}\int_{[0, 1]^2}
          s_1^{-\beta/2}s_2^{-\beta/2}
          \left(\frac{1}{2s_1}+ \frac{1}{2s_2}\right)^{\beta/2-1} \d s_1\d s_2\\
          & \quad \lesssim \int_{0}^{1}s_2^{-\beta/2} \d s_2 \int_0^{1}s_1^{-\beta/2}s_1^{1-\beta/2}\d s_1 <\infty. 
          \end{align*}
          
          Moreover, for $(s_1, s_2)\in [0, 1]\times (1, N^2]$, the integrand is bounded above   by a constant times
          \begin{align*}
          & s_1^{-\beta/2}(1-1/N^2)^{-\beta/2}s_2^{-\beta}
          \left[\left(\frac{1}{2s_1}+ \frac{1}{2s_2}\right)^{\beta/2-1} + 1\right]\\
          &\quad \lesssim (1-1/\e^2)^{-\beta/2}\left(s_1^{1-\beta}s_2^{-\beta} + s_1^{-\beta/2}s_2^{-\beta}\right),
          \end{align*}
          whence for all $N\geq \e$ the integral over  $[0, 1]\times (1, N^2]$ is bounded  above by a constant times
          \begin{align*}
          & \int_0^{1}s_1^{1-\beta}\d s_1\int_{1}^{\infty}s_2^{-\beta} \d s_2 
          + \int_0^{1}s_1^{-\beta/2}\d s_1\int_{1}^{\infty}s_2^{-\beta} \d s_2 <\infty. 
          \end{align*}
          Similarly, the integral over  $(1, N^2] \times [0, 1]$ is also finite uniformly for $N\geq \e$.

          Furthermore, for  $(s_1, s_2)\in (1, N^2]^2$, the integrand is bounded above by 
          $s_1^{-\beta}s_2^{-\beta}$, which implies that the  integral over 
           $(1, N^2]^2$ is also finite uniformly for $N\geq \e$.
           The proof is complete. 
\end{proof}

\begin{lemma}\label{beta=1}
          There exists a constant $C>0$ such that for all $N\geq \e$ 
          \begin{align}
          \label{1=beta}
         &
           \int_{ [0, N^2]^2}\d s_1\d s_2 \,  
        \left[\left[ (s_1(1-s_1/N^2))^{-1/2}\right]\wedge s_1^{-1} \right] \left[\left[ (s_2(1-s_2/N^2))^{-1/2}\right]\wedge
         s_2^{-1} \right]\nonumber \\
     & \hskip2in \times 
              \int_0^{1}\d \lambda\,  \lambda^{-\frac{1}{2}}       
      \left( \left[\left(\frac{1}{2s_1}+ \frac{1}{2s_2}\right)^{-1/2} \left(1-\lambda\right)^{-\frac{1}{2}}\right]
          \wedge \lambda^{-\frac{1}{2}}  \right) \nonumber \\
          & \quad \leq C\, (\log N)^3.
      \end{align}
\end{lemma}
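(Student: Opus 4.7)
The plan is to first collapse the inner $\lambda$--integral using Lemma~\ref{smallbig} in the case $\beta=1$, and then partition the outer square $[0,N^2]^2$ into regions on which the structure of the min--factors simplifies in an essentially obvious way. Setting $\alpha := (1/(2s_1)+1/(2s_2))^{-1/2} = \sqrt{2s_1s_2/(s_1+s_2)}$, the first part of Lemma~\ref{smallbig} yields
\[
J(s_1,s_2) := \int_0^1 \lambda^{-1/2}\left(\left[\alpha(1-\lambda)^{-1/2}\right] \wedge \lambda^{-1/2}\right) d\lambda \asymp \mathbf{1}_{\{\alpha<1\}}\alpha + \mathbf{1}_{\{\alpha\ge 1\}}\log\alpha.
\]
Since $\sqrt{\min(s_1,s_2)} \le \alpha \le \sqrt{2\min(s_1,s_2)}$, the threshold $\alpha \asymp 1$ corresponds to $\min(s_1,s_2) \asymp 1$, and for any $s_1,s_2 \le N^2$ one has $\log \alpha \le \log(\sqrt{2}N) \lesssim \log N$ whenever $N \ge \mathrm{e}$.

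For the outer factors, a direct check shows $(s_i(1-s_i/N^2))^{-1/2} \le s_i^{-1}$ iff $s_i \le N^2/(N^2+1)$, so uniformly in $N \ge \mathrm{e}$,
\[
[(s_i(1-s_i/N^2))^{-1/2}] \wedge s_i^{-1} \asymp s_i^{-1/2}\mathbf{1}_{(0,1]}(s_i) + s_i^{-1}\mathbf{1}_{(1,N^2]}(s_i).
\]
I would then split $[0,N^2]^2$ into the three regions $A=[0,1]^2$, $B=([0,1]\times[1,N^2])\cup([1,N^2]\times[0,1])$, and $C=[1,N^2]^2$. On $A$ we have $\alpha \le \sqrt{2}$, so $J$ is bounded and $\iint_A s_1^{-1/2}s_2^{-1/2}\,ds_1ds_2 < \infty$. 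On $B$, again $\alpha \le \sqrt{2}$ because at least one of $s_1,s_2$ lies in $[0,1]$, so $J$ is bounded and
\[
\int_0^1 s_1^{-1/2}\,ds_1 \int_1^{N^2} s_2^{-1}\,ds_2 \lesssim \log N
\]
(doubled by symmetry). On $C$ we have $\alpha \ge 1$, hence $J \asymp \log \alpha \lesssim \log N$, and
\[
\iint_C s_1^{-1}s_2^{-1}\log\alpha\,ds_1 ds_2 \lesssim (\log N)\cdot (\log N^2)^2 \lesssim (\log N)^3.
\]

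Summing the three contributions gives the desired $C(\log N)^3$ bound. No single step is genuinely difficult: the main item requiring care is the asymptotic comparison for the outer factor near $s_i \approx N^2$, where $(s_i(1-s_i/N^2))^{-1/2}$ is singular but is dominated by the competing $s_i^{-1} \asymp N^{-2}$ in the minimum, so the singularity is harmless. The $(\log N)^3$ growth arises solely from region $C$, where two logarithmic integrations in $s_1,s_2$ combine with one logarithmic factor coming from $\log \alpha$; regions $A$ and $B$ contribute only $O(1)$ and $O(\log N)$ respectively.
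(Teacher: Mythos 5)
Your proof is correct and follows essentially the same route as the paper: apply Lemma~\ref{smallbig} (first part) with $\alpha=(1/(2s_1)+1/(2s_2))^{-1/2}$ to collapse the $\lambda$--integral, then split $[0,N^2]^2$ at $s_i=1$ and estimate each region. The only cosmetic difference is on your region $B$, where you note that $\alpha\le\sqrt 2$ forces the $\lambda$--integral to be $O(1)$, giving a contribution $O(\log N)$; the paper is slightly more wasteful there (bounding the log factor by $\log(2s_2)$ and getting $O((\log N)^2)$), but this is immaterial since region $C$ already produces the dominant $(\log N)^3$.
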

\begin{proof}
          The proof is similar to that of Lemma \ref{beta>1}.

          Applying Lemma \ref{smallbig} (first part) with $\alpha = \left(\frac{1}{2s_1}+ \frac{1}{2s_2}\right)^{-1/2} $,
           for all $N\geq \e$, the above integral is bounded above by  a constant times
          \begin{align*}
         &\int_{ [0, N^2]^2}\d s_1\d s_2 \,  
        \left[\left[ (s_1(1-s_1/N^2))^{-1/2}\right]\wedge s_1^{-1} \right] \left[\left[ (s_2(1-s_2/N^2))^{-1/2}\right]\wedge s_2^{-1} \right]\nonumber \\
     &\quad \qquad \times
     \left(
     \bm{1}_{\{\frac{1}{2s_1}+ \frac{1}{2s_2} >1\}}\left(\frac{1}{2s_1}+ \frac{1}{2s_2}\right)^{-1/2} 
     + 
          \bm{1}_{\{\frac{1}{2s_1}+ \frac{1}{2s_2} \leq 1\}}\log\left[ \left(\frac{1}{2s_1}+ \frac{1}{2s_2}\right)^{-1/2} \right]
     \right). 
          \end{align*}
          
          For $(s_1, s_2)\in [0, 1]^2$, the above integrand  is bounded above  by
          \begin{align*}
          s_1^{-1/2}(1-1/N^2)^{-1/2}s_2^{-1/2}(1-1/N^2)^{-1/2}
          \left(\frac{1}{2s_1}+ \frac{1}{2s_2}\right)^{-1/2},
          \end{align*}
          whence for all $N\geq \e$ the integral over  $[0, 1]^2$ is bounded  above by a constant times
          \begin{align*}
          &(1-1/\e^2)^{-1}\int_{[0, 1]^2}
          s_1^{-1/2}s_2^{-1/2}
          \left(\frac{1}{2s_1}+ \frac{1}{2s_2}\right)^{-1/2} \d s_1\d s_2\\
          & \quad \lesssim \int_{0}^{1}s_2^{-1/2} \d s_2 \int_0^{1}s_1^{-1/2}s_1^{1/2}\d s_1 <\infty. 
          \end{align*}
          
          Moreover, for $(s_1, s_2)\in [0, 1]\times (1, N^2]$, the integrand is bounded  above by
          \begin{align*}
          & s_1^{-1/2}(1-1/N^2)^{-1/2}s_2^{-1}
          \left[\left(\frac{1}{2s_1}+ \frac{1}{2s_2}\right)^{-1/2} + 
          \log\left[ \left(\frac{1}{2s_1}+ \frac{1}{2s_2}\right)^{-1/2} \right]\right]\\
          &\quad \lesssim (1-1/\e^2)^{-1/2}\left(s_2^{-1} + s_1^{-1/2}s_2^{-1}\log (2s_2)\right),
          \end{align*}
          whence for all $N\geq \e$ the integral over  $[0, 1]\times (1, N^2]$ is bounded above  by a constant times 
          \begin{align*}
          & \int_{1}^{N^2}s_2^{-1} \d s_2 
          + \int_0^{1}s_1^{-1/2}\d s_1\int_{1}^{N^2}\frac{\log(2s_2)}{s_2} \d s_2 \asymp (\log N)^2. 
          \end{align*}
          Similarly, the integral over  $(1, N^2] \times [0, 1]$ is also bounded  above  by $C\, (\log N)^2$ for $N\geq \e$.

          Furthermore, the integral over $ (1, N^2]^2$  is bounded  above by 
          \begin{align*}
          & \int_{(1, N^2]^2} \d s_1\d s_2 \, s_1^{-1}s_2^{-1}\log\left[ \left(\frac{1}{2s_1}+ \frac{1}{2s_2}\right)^{-1/2} \right]\\
          &\quad =2 \int_1^{N^2} \d s_1\int_1^{s_1}\d s_2 \, s_1^{-1}s_2^{-1}
          \log\left[ \left(\frac{1}{2s_1}+ \frac{1}{2s_2}\right)^{-1/2} \right]\\
          &\quad \leq  \int_1^{N^2} \d s_1\int_1^{s_1}\d s_2 \, s_1^{-1}s_2^{-1}
          \log s_1 =  \int_1^{N^2} \frac{(\log s_1)^2}{s_1}\d s_1 \asymp (\log N)^3.
          \end{align*}
           The proof is complete. 
\end{proof}

\begin{lemma} \label{2beta>1}
          Fix $1<\beta<2$. Then,
          \begin{align}\label{1<beta2}
         \sup_{N\geq \e} \int_0^N\d \alpha\,  \alpha^{5-4\beta}
         \left[\left[ (\alpha^2(1-{\alpha^2} /N^2))^{-\beta/2}\right] \wedge \alpha^{-2\beta}\right]
          \left( \int_{0}^1    \left[\left[\alpha^\beta ( s(1-s))^{-\beta/2}\right] \wedge s^{-\beta}\right]\d s\right)^2
          <\infty.
          \end{align}
\end{lemma}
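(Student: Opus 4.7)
The plan is to isolate the inner $s$-integral and the outer minimum, estimate each by explicit powers of $\alpha$ on the regions $\alpha\in(0\,,1]$ and $\alpha\in(1\,,N]$, and then verify that the resulting power of $\alpha$ is integrable near $0$ and at infinity. The key observation is that the inner integral is precisely the one already analyzed in Lemma \ref{smallbig}.

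First I would set
\[
J(\alpha):=\int_0^1\left[\alpha^\beta\bigl(s(1-s)\bigr)^{-\beta/2}\wedge s^{-\beta}\right]\d s =\int_0^1 s^{-\beta/2}\left[\alpha^\beta(1-s)^{-\beta/2}\wedge s^{-\beta/2}\right]\d s.
\]
The right-hand form is exactly the integrand in Lemma \ref{smallbig}, so the second case of that lemma (applied here because $1<\beta<2$) yields the uniform bound
\[
J(\alpha)\lesssim \alpha^{\beta}\,\bm{1}_{\{0<\alpha\le 1\}}+\alpha^{2\beta-2}\,\bm{1}_{\{\alpha>1\}}.
\]

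Next I would control the outer factor $M_N(\alpha):=\left[(\alpha^2(1-\alpha^2/N^2))^{-\beta/2}\right]\wedge\alpha^{-2\beta}$ uniformly in $N\ge\e$ by picking different branches of the minimum on the two regions: on $(0\,,1]$ use $M_N(\alpha)\le\alpha^{-\beta}(1-\alpha^2/N^2)^{-\beta/2}\le(1-1/\e^2)^{-\beta/2}\alpha^{-\beta}$, while on $(1\,,N]$ use $M_N(\alpha)\le\alpha^{-2\beta}$. The second choice is important near $\alpha=N$, where the first branch of the minimum blows up; the presence of $\alpha^{-2\beta}$ inside the minimum is precisely what absorbs this apparent singularity.

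Combining these estimates, the integrand in \eqref{1<beta2} is dominated, uniformly in $N\ge\e$, by $\alpha^{5-4\beta}\cdot\alpha^{-\beta}\cdot\alpha^{2\beta}=\alpha^{5-3\beta}$ on $(0\,,1]$ and by $\alpha^{5-4\beta}\cdot\alpha^{-2\beta}\cdot\alpha^{4\beta-4}=\alpha^{1-2\beta}$ on $(1\,,N]$. Since $\beta<2$ gives $5-3\beta>-1$, the first power is integrable at $0$; since $\beta>1$ gives $1-2\beta<-1$, the second power is integrable at infinity. Both contributions are therefore finite and independent of $N$, which proves the lemma. I do not expect any substantive obstacle: the only subtle point is the blowup of $(1-\alpha^2/N^2)^{-\beta/2}$ as $\alpha\to N$, and this is automatically resolved by switching to the $\alpha^{-2\beta}$ branch of the minimum on $(1\,,N]$.
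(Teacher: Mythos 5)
Your proposal is correct and matches the paper's proof essentially verbatim: both recognize the inner integral as the one from Lemma \ref{smallbig}, split at $\alpha=1$, choose the first branch of the minimum on $(0,1]$ (with $(1-\alpha^2/N^2)^{-\beta/2}\le(1-1/\e^2)^{-\beta/2}$) and the $\alpha^{-2\beta}$ branch on $(1,N]$, and arrive at the same integrable powers $\alpha^{5-3\beta}$ and $\alpha^{1-2\beta}$.
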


\begin{proof}
          By Lemma \ref{smallbig}, for all $N\geq \e$, the above integral is bounded above by  a constant times 
          \begin{align*}
          &  \int_0^1\d \alpha\,  \alpha^{5-2\beta}
         \left[\left[ (\alpha^2(1-1/N^2))^{-\beta/2}\right] \wedge \alpha^{-2\beta}\right] + 
         \int_1^{N}\d \alpha\,  \alpha
         \left[\left[ (\alpha^2(1-{\alpha^2}/N^2))^{-\beta/2}\right] \wedge \alpha^{-2\beta}\right]\\
         & \quad \leq (1-{1}/\e^2)^{-\beta/2}\int_0^1 \alpha^{5-3\beta} \d \alpha +         
          \int_1^{\infty}  \alpha^{1-2\beta}\d \alpha < \infty. \qedhere
          \end{align*}
\end{proof}

\begin{lemma}\label{2beta=1}
           There exists a constant $C>0$ such that for all $N\geq \e$,
          \begin{align}\label{beta=12}
          \int_0^N\d \alpha\,  \alpha
           \left[\left[ (\alpha^2(1-{\alpha^2}/N^2))^{-1/2}\right] \wedge \alpha^{-2}\right]  
           \left( \int_{0}^1    \left[\left[\alpha( s(1-s))^{-1/2}\right] \wedge s^{-1}\right]\d s\right)^2 \leq C\,( \log N)^3. 
          \end{align}
\end{lemma}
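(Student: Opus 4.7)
The plan is to bound the integrand pointwise using Lemma~\ref{smallbig} with $\beta=1$ and then split the outer integral over $[0,N]$ into two pieces. The estimate is essentially mechanical once one identifies the inner integral as a special case of the expression analyzed in Lemma~\ref{smallbig}.

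First, I would rewrite the inner integral as
\[
        J(\alpha) := \int_0^1 \left[\alpha \bigl(s(1-s)\bigr)^{-1/2} \wedge s^{-1}\right] ds
        = \int_0^1 \lambda^{-1/2}\left[\bigl(\alpha(1-\lambda)^{-1/2}\bigr) \wedge \lambda^{-1/2}\right] d\lambda,
\]
which by Lemma~\ref{smallbig} (first part, $\beta=1$) satisfies $J(\alpha) \lesssim \alpha$ for $\alpha \in (0\,,1]$ and $J(\alpha) \lesssim 1+\log \alpha$ for $\alpha \geq 1$.

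Next, I would split the integral $\int_0^N d\alpha$ into $\int_0^1 + \int_1^N$. On $[0\,,1]$, I note that
\[
        \left(\alpha^2(1-\alpha^2/N^2)\right)^{-1/2} \wedge \alpha^{-2}
        \leq \alpha^{-1}(1-\alpha^2/N^2)^{-1/2} \leq (1-e^{-2})^{-1/2}\alpha^{-1}
\]
for all $N \geq \e$, so the integrand is bounded by $C \cdot \alpha \cdot \alpha^{-1} \cdot \alpha^2 = C\alpha^2$, whose integral over $[0\,,1]$ is $O(1)$.

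On $[1\,,N]$, the elementary inequality $\alpha \geq 1 \geq N/\sqrt{N^2+1}$ is equivalent to $\alpha^{-2} \leq (\alpha^2(1-\alpha^2/N^2))^{-1/2}$, so the minimum equals $\alpha^{-2}$ uniformly on this range. The integrand thus reduces to $\alpha \cdot \alpha^{-2} \cdot J(\alpha)^2 \lesssim \alpha^{-1}(1+\log \alpha)^2$. The substitution $u = \log \alpha$ yields
\[
        \int_1^N \frac{(1+\log \alpha)^2}{\alpha}\,d\alpha
        = \int_0^{\log N}(1+u)^2\,du \lesssim (\log N)^3
        \qquad\text{for all $N \geq \e$}.
\]
Adding the two contributions gives the bound $C(\log N)^3$. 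I do not anticipate any substantive obstacle; the argument is a routine endgame once Lemma~\ref{smallbig} is invoked, exactly paralleling the computation of the outer integral in Lemma~\ref{beta=1}.
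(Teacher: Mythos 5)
Your proof is correct and follows essentially the same route as the paper: bound the inner $\lambda$-integral by Lemma~\ref{smallbig} with $\beta=1$, split the outer $\alpha$-integral at~$1$, use the trivial bound for the minimum on $[0,1]$ and observe the minimum collapses to $\alpha^{-2}$ on $[1,N]$, and integrate. (Your use of $1+\log\alpha$ rather than the paper's bare $\log\alpha$ for $\alpha\ge 1$ is a slightly safer reading of Lemma~\ref{smallbig} and does not change the conclusion.)
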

\begin{proof}
          Thanks to Lemma \ref{smallbig} (first part),  the integral in \eqref{beta=12} is bounded above by a constant times
          \begin{align*}
         (1- {1} /\e^2)^{-1/2} \int_0^1\alpha^{1-1 +2}\d\alpha + \int_1^{N}\alpha^{1-2}(\log \alpha)^2 \d \alpha 
         \asymp (\log N)^3. 
          \end{align*}
\end{proof}

\noindent
{\bf Acknowledgement.} We would like to thank the referees for  their  valuable  and useful comments.


\begin{thebibliography}{999}
\bibitem{ACQ11} Amir, G., Corwin, I. and Quastel, J. (2011).
 Probability distribution of the free energy of the continuum directed random polymer in 1+1 dimensions. \textit{Comm. Pure Appl. Math.} \textbf{64}, 466-537
 
\bibitem{CarlenKree1991}
	Carlen, E. and Kr\'ee, P. (1991).
	$L^p$ estimates on iterated stochastic integrals.
	{\it Ann.\ Probab.}\ {\bf 19}{\it(1)} 354--368.
\bibitem{CD15} Chen, L. and Dalang, C. R. (2015)
Moments and growth indices for the nonlinear stochastic heat equation with rough initial conditions.
{\it Ann.\ Probab.}\ {\bf 43}{\it(6)} 3006--3051.
\bibitem{CHN} Chen, L., Hu, Y.  and Nualart, D. (2016)
Regularity and strict positivity of densities for the nonlinear stochastic heat equation.
arXiv:1611.03909. To appear in {\it Mem. Amer. Math. Soc.}
\bibitem{CH19Comparison}
	Chen, L. and Huang, J. (2019).
	\newblock Comparison principle for stochastic heat equation on $\R^d$.
	\newblock {\it Ann.\ Probab.}\ {\bf 47}{\it(2)} 989-1035.
\bibitem{CKNP} Chen, L. Khoshnevisan, D., Nualart, D. and Pu, F. (2019).
	Spatial ergodicity for SPDEs via Poincar\'e-type inequalities. Preprint available
	at \url{https://arxiv.org/abs/1907.11553}.
\bibitem{CKNP_b} Chen, L., Khoshnevisan, D.,  Nualart, D. and Pu, F. (2019).
	Poincar\'e inequality, and central limit theorems for parabolic stochastic partial
	differential equations.
    Preprint available
	at \url{https://arxiv.org/abs/1912.01482}. 
 \bibitem{CKNP_c} Chen, L., Khoshnevisan, D., Nualart, D. and Pu, F. (2020).
  Spatial ergodicity and central limit theorems for parabolic Anderson model with delta initial condition.
Preprint available
	at \url{https://arxiv.org/abs/2005.10417}. 
 \bibitem{CKNP_d} Chen, L.,  Khoshnevisan, D., Nualart, D. and Pu, F. (2020).
  Central limit theorems for spatial averages of the stochastic heat
	equation via Malliavin-Stein's method. Preprint available
	at \url{https://arxiv.org/abs/2008.02408}.

\bibitem{Dalang1999}
	Dalang, C.R. (1999).
	Extending the martingale measure stochastic integral with
	applications to spatially homogeneous s.p.d.e.'s.
	{\it Electron.\ J. Probab.}\ {\bf 4}{\it (6)}, 29 pp.
\bibitem{Davis1976}
	Davis, B. (1976).
	On the $L^p$ norms of stochastic integrals and other martingales.
	{\it Duke Math.\ J.} {\bf 43}{\it (4)} 697--704.
\bibitem{DNZ2018}
Delgado-Vences, F., Nualart, D. and Zheng, G. (2020).
A Central Limit Theorem for the stochastic wave equation with fractional noise. 
{\it Ann. Inst. Henri Poincaré Probab. Stat.} {\bf 56} {\it 4} 3020--3042.
\bibitem{Doob}
	Doob, J. L. (1990).
	{\it Stochastic Processes.}
	Reprint of the 1953 original.
	John Wiley \&\ Sons, Inc., New York, viii+654.
\bibitem{FoondunKhoshnevisan2013}
	Foondun, M. and Khoshnevisan, D. (2013).
	On the stochastic heat equation with spatially-colored random forcing.
	{\it Trans.\ Amer.\ Math.\ Soc.}\ {\bf 365}{\it (1)} 409--458.

\bibitem{GNZ20}
Guerrero, R.B., Nualart, D. and Zheng, G. (2020).
Averaging 2d stochastic wave equation. 
Preprint available
	at \url{https://arxiv.org/abs/2003.10346}. 
\bibitem{HNV2018}
		Huang, J., Nualart,D., and Viitasaari, L. (2020).
	A central limit theorem for the stochastic heat equation.
	\textit{Stochastic Process. Appl.} \textbf{131}, 7170--7184. 
\bibitem{HNVZ2019}
	Huang, J., Nualart, D.,  Viitasaari, L. and Zheng, G. (2020).
	Gaussian fluctuations for the stochastic heat equation with colored noise.
	{\it Stoch PDE: Anal Comp}\ {\bf 8} 402--421. 
\bibitem{HuNualart09} 
Hu, Y. and Nulart, D. (2009).
  Stochastic heat equation driven by fractional noise and local time.
   \emph{ Probab. Theory Related Fields}, {\bf 143} {\it 1-2} 285--328.
\bibitem{Lyons}
	Lyons, R. (1995).
	Seventy years of Rajchman measures.
	{\it J. Fourier Analysis and Applications.}
	Proceedings of the Conference in Honor of Jean-Pierre Kahane (Orsay, 1993).
	363--377.
\bibitem{Nualart}
	Nualart, D. (2006).
	{\it  The Malliavin Calculus and Related Topics}. Springer, New York.
\bibitem{NN}
     Nualart, D. and Nualart, E.  (2018).
      {\it An introduction to Malliavin calculus}.
      Cambridge University Press, Cambridge, UK.
 \bibitem{NSZ20}
 Nualart, D., Song, X. and Zheng, G. (2020)
 Spatial averages for the Parabolic Anderson model driven by rough noise. 
 Preprint available
	at \url{https://arxiv.org/abs/2010.05905}.
\bibitem{NZ2020}
Nualart, D. and Zheng, G. (2020).
Central limit theorems for stochastic wave equations in dimensions one and two. 
Preprint available
	at \url{https://arxiv.org/abs/2005.13587}. 

\bibitem{Walsh}
	Walsh, J.B. (1986).
	{\it An Introduction to Stochastic Partial Differential Equations.}
	\`Ecole d'\'et\'e de probabilit\'es de Saint-Flour, XIV-1984, 265--439,
	In: {\it Lecture Notes in Math.}\ {\bf 1180}, Springer, Berlin.
\end{thebibliography}
\end{document}